\newtheorem{cor}{Corollary}
\newtheorem{thm}{Theorem}
\newtheorem{rem}{Remark}
\newtheorem{prop}{Proposition}
\newtheorem{lem}{Lemma}
\newcommand{\sumstar}{\;\sideset{}{^*}\sum}
\newcommand{\sumb}{\sideset{}{^\flat}\sum}
\newcommand{\sump}{\sideset{}{^P}\sum}
\newcommand{\ga}{\gamma}
\newcommand{\de}{\delta}
\newcommand{\tRe}{\textup{Re }}
\newcommand{\tReb}{\textup{Re}}
\newcommand{\tIm}{\textup{Im }}
\newcommand{\bfrac}[2]{\left(\frac{#1}{#2}\right)}
\newcommand{\M}{\mathcal M}
\newcommand{\E}{\mathcal E}
\newcommand{\B}{\mathcal B}
\newcommand{\C}{\mathcal C}
\newcommand{\N}{\mathcal N}
\newcommand{\A}{{\mathcal A}}
\newcommand{\R}{\mathcal R}
\newcommand{\U}{\mathcal U}
\newcommand{\modd}[1]{\; ( \textup{mod} \; #1)}
\newcommand{\q}{\mathfrak q}
\newcommand{\tZ}{\tilde{Z}}
\newcommand{\Cf}{\mathfrak C}
\newcommand{\Df}{\mathfrak D}
\begin{document}
\title{Prime values of $a^2 + p^4$}
\author{D.R. Heath-Brown}
\address{Mathematical Institute \\
University of Oxford\\
Andrew Wiles Building \\
Radcliffe Observatory Quarter\\
Woodstock Road\\
Oxford\\UK\\
OX2 6GG }
\email{Roger.Heath-Brown@maths.ox.ac.uk}
\author{Xiannan Li}
\address{Mathematical Institute \\
University of Oxford\\
Andrew Wiles Building \\
Radcliffe Observatory Quarter\\
Woodstock Road\\
Oxford\\UK\\
OX2 6GG }
\email{lix1@maths.ox.ac.uk}

\subjclass[2010]{Primary: 11M06, Secondary: 11M26}

\begin{abstract} 
We prove an asymptotic formula for the number of primes of the shape
$a^2+p^4$, thereby refining the well known work of Friedlander and
Iwaniec \cite{FI}.  Along the way, we prove a result on equidistribution of primes up to $x$, in which the moduli may be
almost as large as $x^2$.  
\end{abstract}

\maketitle
\section{Introduction}
Many remarkably difficult conjectures in prime number theory take the
form that there are infinitely many primes in some set of natural
numbers $S$.  In many interesting examples, we even have conjectured
asymptotic formulas for the number of primes in $S$.  Thus, we think
that there are infinitely many primes of the form $p+2$, $a^2+1$, $a^2
+ b^6$, and so on.  Here, and everywhere in the paper, $p$ shall
always denote a prime. 

The generality of our belief is in stark contrast with the paucity of
examples for which we can prove our conjectures.  In this paper, we
are interested in the problem of finding primes in sequences which
occur as the special values of a polynomial in two variables.  For
polynomials in one variable, only the linear case is understood, from
the work of Dirichlet. 

A classical result is that there are infinitely many primes of the
form $a^2+b^2$.  Indeed, by a result of Fermat, primes of that form
are essentially the same as primes of the form $4n+1$, so that this
reduces to a special case of Dirichlet's theorem on primes in
arithmetic progressions.  Let us define the exponential density of the
sequence of values of the polynomial $P(a, b)$ as the infimum of those
real $\lambda$ for which
\begin{equation}
\#\{P(a, b) \leq x\} \ll x^\lambda.
\end{equation}

Then the density of the sequence defined by $a^2+b^2$ is $1$, the same
as the set of all natural numbers.   

It is much more challenging to prove a similar result when the
sequence given by $P(a, b)$ has density less than $1$.  The first
result in this direction was the breakthrough of Friedlander and
Iwaniec \cite{FI} on the prime values of $a^2+b^4$, which was followed
by the result of Heath-Brown \cite{HB} on primes values of $a^3+2b^3$.
It is worth mentioning that the density of the sequence is not the
only measure of difficulty, as no results are available for prime
values of $a^2+b^3$ due to its lack of structure.   

Aside from generalizations of Heath-Brown's result to more general
cubic polynomials by Heath-Brown and Moroz \cite{HBM}, the theorems of
Friedlander and Iwaniec \cite{FI} and Heath-Brown \cite{HB} remain the
only results of this type.  In this paper, we add the following
example on prime values of $a^2+p^4$. 

\begin{thm}\label{thm:1}
\begin{equation}
\#\{a^2+p^4 \leq x: a>0 \textup{ and } a^2 + p^4 \textup{ is prime}\}
= \nu \frac{4Jx^{3/4}}{\log^2 x}\left(1+O_\epsilon\bfrac{1}{(\log
    x)^{1-\epsilon}}\right), 
\end{equation}where
\begin{equation}
J = \int_0^1 \sqrt{1-t^4} dt,
\end{equation}and
\begin{equation}
\nu = \prod_p \left(1-\frac{\chi_4(p)}{p-1}\right),
\end{equation}for $\chi_4$ the non-principal character modulo $4$.
\end{thm}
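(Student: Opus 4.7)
The plan is to write
$$S(x) := \sum_{p}\sum_{a \geq 1} \mathbf{1}(a^2 + p^4 \le x)\,\mathbf{1}(a^2 + p^4 \in \mathbb{P})$$
and to combine the Friedlander--Iwaniec treatment of $a^2 + b^4$ \cite{FI} with a sieve that further restricts $b = p$ to be prime. Since $a^2 + p^4 = (a+ip^2)(a-ip^2)$ factors in $\mathbb{Z}[i]$, the target is the number of Gaussian primes $\pi = a+ip^2$ of norm $\le x$ with $p$ a rational prime, i.e., Gaussian primes on the horizontal lines $\tIm\pi = p^2$. To handle the primality of $p$, I would apply a combinatorial identity (Vaughan's or Heath-Brown's) to $\Lambda(p)/\log p$, decomposing the target sum into Type I pieces of the rough shape
$$\sum_{d \le D}\alpha_d \sum_k \sum_a \mathbf{1}(a^2 + (dk)^4 \in \mathbb{P})$$
and Type II pieces of the rough shape
$$\sum_m \sum_n \alpha_m \beta_n \sum_a \mathbf{1}(a^2 + (mn)^4 \in \mathbb{P}).$$

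The Type I sums can be folded into the Friedlander--Iwaniec sieve for $a^2+b^4$ with $b=dk$ running over multiples of $d$: FI already supplies an asymptotic on average over $b$ with uniformity in arithmetic progressions of $b$, and a careful choice of the Vaughan parameters keeps the Type I contribution within the target error $O((\log x)^{-1+\epsilon})$. The Type II sums are the real obstacle---neither $m$ nor $n$ is distinguished, the coefficients $\alpha_m,\beta_n$ carry no multiplicative structure, and both variables may run over long ranges. My plan is to dualise, fixing $q = a^2 + (mn)^4$ to be a prime $\le x$ and rewriting the bilinear form as one that detects divisors $m$ of the Gaussian factors $a \pm ip^2$; this reduces the Type II treatment to an equidistribution statement for rational primes $q \le x$ in arithmetic progressions whose moduli may be nearly $x^2$---precisely the auxiliary theorem advertised in the abstract. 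Establishing that equidistribution, presumably through the Gaussian-integer factorisation of $q$, Weil-type bounds for character sums over $\mathbb{Z}[i]$, and averaging that is special to the $a^2+p^4$ shape, is the hard part.

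The main term is extracted from the trivial ($b=1$) contribution in Vaughan's identity together with the main term of the FI sieve: heuristically,
$$S(x) \approx \nu\sum_{a,\, b}\frac{\mathbf{1}(a^2 + b^4 \le x)}{\log b \cdot \log(a^2 + b^4)},$$
where the outer $1/\log(a^2+b^4)$ accounts for the primality of $a^2+p^4$ and $\nu$ is the Euler product of local densities. Converting the double sum to an integral and substituting $b = x^{1/4}t$ gives $4\nu J x^{3/4}/\log^2 x$: the integral $J = \int_0^1 \sqrt{1-t^4}\,dt$ encodes the shape of the region $a^2 + b^4 \le x$, the factor $4$ arises from $\log b \sim \tfrac14 \log x$, and $\nu$ assembles from the local densities, the Euler factor $1-\chi_4(\ell)/(\ell-1)$ at an odd prime $\ell$ recording the chance that $a^2 + p^4 \not\equiv 0 \pmod{\ell}$ when $p$ is averaged over nonzero residues modulo $\ell$ by Dirichlet's theorem.
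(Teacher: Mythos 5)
Your proposal takes a fundamentally different route from the paper, and the route has serious gaps that I do not think can be patched without essentially reinventing the paper's actual strategy.

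The paper does \emph{not} apply Vaughan's (or Heath-Brown's) identity to $\Lambda(p)$.  Instead, the prime variable $p$ is fixed to be prime from the outset inside the weighted sequences $\A=\{a(n)\}$ and $\B=\{b(n)\}$, where $a(n)$ counts representations $n=a^2+p^4$ with $p$ prime, $p^2\in I$, weighted by $2p\log p$, and $b(n)$ counts $n=a^2+p^2$ analogously.  The sieve is then applied to the primality of $n$ itself (a Buchstab decomposition plus a Fundamental Lemma), and the Type~II/bilinear sums that arise are over divisors $m,n$ of $a^2+p^4$ coming from the Gaussian-integer factorisation $a+ip^2 = wz$.  Crucially, the whole thing is run \emph{in tandem} for $\A$ and $\B$, and the target is Proposition~\ref{prop:main}, namely $\pi(\A)-\pi(\B)\ll \mu(I)(\log x)^{-2+\epsilon}$.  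The value of $\pi(\B)$ comes from Fouvry--Iwaniec \cite{FouI}.  This comparison, rather than any direct asymptotic for $\pi(\A)$, is the key structural idea, and it is what lets the paper dodge the hard harmonic-analysis of Sections 4--9 of \cite{FI} on the regularity of the squares --- regularity that the squares of primes simply do not have.  Your write-up never identifies this comparison.

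The specific gap in your plan is the Type~II step.  Once you have Vaughan'd $\Lambda(p)/\log p$, the Type~II contribution is of the shape $\sum_m\sum_n \alpha_m\beta_n\,\#\{a:a^2+(mn)^4\le x,\ a^2+(mn)^4\in\mathbb{P}\}$ with $mn=p\le x^{1/4}$.  This is a bilinear form in the \emph{small} prime variable, and the inner quantity is the raw count of Friedlander--Iwaniec primes on a fixed horizontal line --- an object FI controls on average over $b$ but not with the bilinear (in $b=mn$) uniformity you need; obtaining that would be considerably stronger than \cite{FI}.  Your proposed ``dualisation'' also does not work as stated: if $q=a^2+(mn)^4$ is a rational prime then $a+i(mn)^2$ is a Gaussian prime, so it has no nontrivial Gaussian divisors to ``detect $m$'' with.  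And the claimed reduction to ``equidistribution of rational primes $q\le x$ in progressions of modulus nearly $x^2$'' cannot be what is meant: with modulus $\sim x^2$ and length $x$ each progression contains at most one integer, so no such individual statement can hold.  What the abstract and Corollary~\ref{cor:primes} actually say is a Barban--Davenport--Halberstam-type mean-square bound for the \emph{bilinear} quantity $\sum_{m,n\le x,\ n\equiv am\,(q)}\Lambda(m)\Lambda(n)$, averaged over $a\,(\mathrm{mod}\,q)$ and $q\le Q$.  In the paper this is applied not to the large primes $q\le x$ but to the sequence $c_1(m)=2p\log p$ (supported on small primes $m=p\le x^{1/4}$) in residue classes modulo $D=\Delta(z_1,z_2)\le 2N\le x^{1/2}$ --- exactly the ``modulus up to the square of the length'' regime.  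Your proposal misidentifies both what this auxiliary theorem asserts and where in the argument it is used.

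Finally, your Type~I step (``fold $b=dk$ into the FI sieve, which already supplies uniformity in progressions of $b$'') also needs justification: FI prove a level of distribution for $a^2+b^4$ in progressions of $a^2+b^4$, not an asymptotic for primes $a^2+b^4$ uniformly over progressions of $b$ with $d$ as large as Vaughan's identity would require.  The paper avoids this entirely by invoking Friedlander--Iwaniec's \cite{FI2} level-of-distribution result (Proposition~\ref{prop:leveldis}), which treats the honest sequences $\A,\B$ directly.  Your heuristic computation of the main-term constant $4\nu J$ is fine, but the proof architecture around it is the wrong one.
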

In the statement of the Theorem above, and in the rest of the paper,
$\epsilon$ denotes any sufficiently small, positive constant, not
necessarily the same in each occurrence. 

\begin{rem}
The reader may check that the number of elements $a^2+p^4 \leq x$ is
well approximated by $\frac{4 J x^{3/4}}{\log x}$. 

Our method also
applies to more general sequences $a^2+y^4$ where $y$ is restricted to
a set $Y$ which is regularly distributed and is not too sparse. 
\end{rem}

As in the works \cite{FI} and \cite{HB}, the proof rests on
establishing a level of distribution for the sequence, and the
estimation of special bilinear sums.  The first appears in classical
sieve theory, and simply asks for good estimates for the remainder
term in counting numbers of the form $a^2+p^4$ divisible by a given
integer $d$, averaged over $d$.  The bilinear sums estimate is the
ingredient which allows us to overcome the parity barrier, and as in
previous works is the most significant part of the proof.   

One ingredient used in Friedlander and Iwaniec's work on prime values
of $a^2+b^4$ is the regularity of the distribution of the squares of
integers, which allows them to use a delicate harmonic analysis
argument to extract certain main terms in their bilinear sum, and
prove that the error terms are small on average (see Sections 4-9 in
\cite{FI}).  This is the portion of their work which overcomes the
sparsity of their sequence. 

This regularity does not exist in the case of squares of primes, and
we need to develop a method which  applies for more general sequences.
In particular, we prove a result about the distribution of sequences
in arithmetic progressions, which is similar in spirit to a general
form of the Barban--Davenport--Halberstam theorem.  In our case, for
$a^2 + p^4 \le x$, we have $p \le x^{1/4}$, while the modulus
appearing in our bilinear sum can be as large as $x^{1/2 - \delta}$.
We thus need an equidistribution result which holds when the modulus
goes up to nearly the square of the length of the sum, in contrast to
the Barban--Davenport--Halberstam theorem.  Since this result is of
independent interest, we first illustrate our result in the case of
primes below. 

\begin{cor}\label{cor:primes}
For $(a, q) = 1$, let 
\begin{equation}
S(x; a, q) = \sum_{\substack{m, n \leq x \\n \equiv am \modd{q}}} \Lambda(m) \Lambda(n).
\end{equation}
Then for any $A>0$, there exists $B = B(A)$ such that
\begin{equation}
\sum_{q\leq Q}\; \sumstar_{a \modd{q}} \left|S(x; a, q) -
  \frac{x^2}{\phi(q)}\right|^2 \ll \frac{x^4}{\log^A x} 
\end{equation}for $Q \leq x^2 (\log x)^{-B}$.
\end{cor}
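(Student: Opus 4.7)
The plan is to reduce the variance to a fourth--power moment of $\psi(x,\chi)=\sum_{n\leq x}\Lambda(n)\chi(n)$ via Plancherel on $(\mathbb{Z}/q\mathbb{Z})^\ast$, and then bound this fourth moment by combining the multiplicative large sieve with inputs on the zeros of Dirichlet $L$-functions.

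Ignoring the pairs $(m,n)$ with $(mn,q)>1$ (which contribute negligibly, since $\Lambda$ is supported on prime powers and only finitely many primes divide $q$), orthogonality of Dirichlet characters yields
\[
S(x;a,q)=\frac{1}{\phi(q)}\sum_{\chi\bmod q}\bar\chi(a)\,\psi(x,\chi)\psi(x,\bar\chi)+(\text{small})=\frac{1}{\phi(q)}\sum_{\chi\bmod q}\bar\chi(a)\,|\psi(x,\chi)|^2+(\text{small}),
\]
using $\psi(x,\bar\chi)=\overline{\psi(x,\chi)}$. The principal character contributes $\psi(x,\chi_0)^2/\phi(q)=x^2/\phi(q)+O(x^2/(\phi(q)(\log x)^A))$ by Siegel--Walfisz, and then Plancherel on $(\mathbb{Z}/q\mathbb{Z})^\ast$ gives
\[
\sumstar_{a\bmod q}\Bigl|S(x;a,q)-\frac{x^2}{\phi(q)}\Bigr|^2=\frac{1}{\phi(q)}\sum_{\chi\neq\chi_0\bmod q}|\psi(x,\chi)|^4+(\text{lower order}).
\]

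Summing over $q\leq Q$ and reducing every character to the primitive character $\chi^\ast$ of conductor $r>1$ that induces it (the discrepancy $|\psi(x,\chi)-\psi(x,\chi^\ast)|\ll\omega(q)\log x$ is acceptable), and using $\sum_{q\leq Q,\,r\mid q}1/\phi(q)\ll(\log Q)/\phi(r)$, the task reduces to
\[
\sum_{1<r\leq Q}\frac{1}{\phi(r)}\sumstar_{\chi\bmod r}|\psi(x,\chi)|^4\ll\frac{x^4}{(\log x)^{A+1}}.
\]
I would split by the size of $r$. For $r\leq(\log x)^{C}$ Siegel--Walfisz gives $|\psi(x,\chi)|\ll x\exp(-c\sqrt{\log x})$, which is more than enough. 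For $(\log x)^{C}<r\leq Q$, I would interpret $\psi(x,\chi)^2=\sum_{k\leq x^2}c_k\chi(k)$ with $c_k=\sum_{mn=k,\,m,n\leq x}\Lambda(m)\Lambda(n)$ satisfying $\sum_k c_k^2\ll x^2(\log x)^{C'}$, so the multiplicative large sieve yields $\sum_{r\leq R}\sumstar_\chi|\psi(x,\chi)|^4\ll(R^2+x^2)\,x^2(\log x)^{C'}$.

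\textbf{Main obstacle.} This mean-value bound alone only yields $x^4(\log x)^{O(1)}$ after weighting by $1/\phi(r)$ and taking $R$ near $x^2$, falling short of the required $x^4/(\log x)^A$ by a power of $\log x$. The extra savings must come from a log-free zero--density theorem in the spirit of Linnik or Jutila: outside a sparse exceptional set of characters one has $|\psi(x,\chi)|\ll x/(\log x)^D$ for any $D$, while the exceptional set contributes acceptably via the large sieve. Carrying out this coupling of the large-sieve fourth moment with the density-theoretic input, uniformly in $R$ up to $x^2/(\log x)^B$, is the delicate heart of the argument, and it is precisely what takes the result beyond the classical Barban--Davenport--Halberstam range.
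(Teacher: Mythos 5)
Your outline is essentially the one the paper takes: orthogonality to pass to Dirichlet characters, reduction to primitive characters, Siegel--Walfisz for the tiny conductors, and the multiplicative large sieve applied to the Dirichlet coefficients of $\psi(x,\chi)^2$ for the remaining conductors. Up to that point your calculation is correct.

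However, the ``main obstacle'' you identify does not exist, and believing it does is the real gap in your write-up. You stop at the claim that the large-sieve fourth moment ``only yields $x^4(\log x)^{O(1)}$ after weighting by $1/\phi(r)$ and taking $R$ near $x^2$.'' But $R$ is \emph{never} near $x^2$: the hypothesis of the Corollary is $Q\le x^2(\log x)^{-B}$ with $B=B(A)$ \emph{at your disposal}. Spell out the dyadic computation. For $R<r\le 2R$ the large sieve gives
\[
\sum_{R<r\le 2R}\frac{1}{\phi(r)}\sumstar_{\chi\bmod r}|\psi(x,\chi)|^4
\ll\frac{1}{R}\bigl(R^2+x^2\bigr)\|c\|^2
\ll\Bigl(R+\frac{x^2}{R}\Bigr)x^2(\log x)^{3},
\]
using $\|c\|^2\ll x^2(\log x)^3$. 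Summing over dyadic $R$ in $[(\log x)^{C},Q]$ telescopes (no extra $\log$ loss), giving
\[
\ll\Bigl(Q+\frac{x^2}{(\log x)^{C}}\Bigr)x^2(\log x)^3
\ll x^4(\log x)^{3-B}+x^4(\log x)^{3-C}.
\]
Choosing $C$ and $B$ of size roughly $A+5$ (and accounting for the additional $\log Q$ from $\sum_{q\le Q,\,r\mid q}1/\phi(q)\ll(\log Q)/\phi(r)$, plus Siegel--Walfisz on $r\le(\log x)^C$) already delivers $x^4/(\log x)^A$. No zero-density theorem, log-free or otherwise, is needed; the entire savings over the classical Barban--Davenport--Halberstam range comes from the fact that $Q$ is held a factor $(\log x)^B$ below $x^2$ \emph{and} you may take $B$ as large as you like. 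This is precisely how the paper argues, via its Proposition~\ref{proplargesieve} and Theorem~\ref{thm:2}: after Plancherel, the main term absorbs all characters of conductor $\le Q_0=(\log x)^{A+1}$, and the large sieve applied to the convolution coefficients $a(n)=\sum_{n_1n_2=n}\gamma(n_1)\overline{\delta(n_2)}$ gives the bound $(Q+N_1N_2/Q_0)(\log Q)\|\gamma\tau\|^2\|\delta\tau\|^2$, with both terms small by the choices of $Q_0$ and $B$. So: replace the speculative paragraph with the computation above, and the proof closes.
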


At first sight it seems remarkable that the primes up to $x$ should be
well distributed for moduli as large as $x^2 (\log x)^{-B}$, but this
is essentially the conclusion of the Corollary.  The reader may verify
however that the analogous result does not hold without the average
over $a \modd{q}$. 

For our application, we will require a more general result which
applies for sequences satisfying the Siegel--Walfisz condition, which
is essentially saying that the sequence is equidistributed for small
moduli. Remark \ref{rem:SWdef} below makes this precise.   

\begin{rem}\label{rem:SWdef}
We say that an arithmetic function $c(n)$ satisfies a Siegel--Walfisz
condition if for any constant $\kappa$, we have that for any $q <
(\log x)^{\kappa}$, and any nonprincipal character $\chi \modd{q}$, we have
\begin{equation}\label{eqn:SW1}
\sum_{n\leq x} \chi(n)c(n) \ll_\kappa x^{1/2}
\|c \| (\log x)^{-\kappa}. 
\end{equation}
\end{rem}
Here $\|c \|^2 = \sum_{n} |c(n)|^2$ as usual.

This is known in the case $c(n) = \Lambda(n)$ by the
Siegel--Walfisz Theorem so Corollary \ref{cor:primes} follows
immediately from Corollary \ref{cor:SWsequence} below. 

\begin{cor}\label{cor:SWsequence}
Let $c_1(n)$ and $c_2(n)$ be arithmetic functions supported on $n\leq
x$ with $c_1(n)$ satisfying the Siegel--Walfisz condition given by
\eqref{eqn:SW1}.  For $(a, q) = 1$, let  
\begin{equation}
S(x; a, q) = \sum_{\substack{m, n \leq x \\ (mn,q)=1\\ m \equiv an \modd{q}}} c_1(m)c_2(n),
\end{equation}and let
\begin{equation}
S(x; q) = \frac{1}{\phi(q)} \left(\sum_{\substack{m \le
      x\\ (m, q) = 1}} c_1(m)\right)\left(\sum_{\substack{n \le
      x\\ (n, q) = 1}} c_2(n)\right). 
\end{equation}
Then for any $A>0$, there exists $B = B(A)$ such that for $Q \leq x^2
(\log x)^{-B}$ 
\begin{equation}\label{eqn:cor1}
\sum_{q\leq Q}\; \sumstar_{a \modd{q}} \left|S(x; a, q) - S(x;
  q)\right|^2 \ll \frac{x^2}{\log^A x} \|c_1\tau\|^2\|c_2\tau\|^2.
\end{equation}
\end{cor}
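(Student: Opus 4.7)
My plan is to follow the variance-type template familiar from Barban--Davenport--Halberstam: first collapse the mean square via character orthogonality, then split by conductor, treating small conductors by Siegel--Walfisz and large ones by the multiplicative large sieve. The key gain is that the $\phi(q)^{-1}$ weighting, once we have induced to primitive characters, supplies enough averaging to let the moduli reach $Q \sim x^2$ rather than the classical $Q \sim x$.

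Setting $T_i^q(\chi) := \sum_{n \le x,\,(n,q)=1} c_i(n)\chi(n)$ and expanding $\mathbbm{1}_{m \equiv an \pmod q}$ via characters of $(\mathbb Z/q)^*$ yields
\[
S(x;a,q) - S(x;q) = \frac{1}{\phi(q)} \sum_{\chi \neq \chi_0 \pmod q} \bar\chi(a)\, T_1^q(\chi)\, T_2^q(\bar\chi),
\]
whence Parseval on $(\mathbb Z/q)^*$ gives $\sum_a^{*} |S(x;a,q)-S(x;q)|^2 = \phi(q)^{-1} \sum_{\chi \neq \chi_0} |T_1^q(\chi)|^2 |T_2^q(\chi)|^2$. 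It thus suffices to bound $E := \sum_{q \le Q} \phi(q)^{-1} \sum_{\chi \neq \chi_0} |T_1^q(\chi) T_2^q(\chi)|^2$. I next reduce each $\chi \pmod q$ to its primitive inducer $\chi^*$ of conductor $r \mid q$; the discrepancy between $(n,q)=1$ and $(n,r)=1$ is handled by Möbius inversion on the primes of $q$ not dividing $r$, creating divisor-function losses that will be absorbed by the $\tau$-weighted norms on the right-hand side. Set $R_0 := (\log x)^\kappa$ and split $E = E_1 + E_2$ according to whether $r \le R_0$ or $r > R_0$.

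In the small-conductor case $E_1$, the Siegel--Walfisz hypothesis on $c_1$ gives $|T_1^q(\chi)| \ll x^{1/2} \|c_1\| (\log x)^{-\kappa'}$ with $\kappa'$ arbitrary, paired with the trivial bound $|T_2^q(\chi)|^2 \le x \|c_2\|^2$; counting $\ll R_0^2$ primitive characters of conductor at most $R_0$ and using $\sum_{q \le Q,\,r \mid q} \phi(q)^{-1} \ll (\log Q)/\phi(r)$ produces $E_1 \ll x^2 (\log x)^{1 + 2\kappa - 2\kappa'} \|c_1\|^2 \|c_2\|^2$, which beats the target once $\kappa'$ is sufficiently large. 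For the large-conductor case $E_2$, I view $T_1^q(\chi) T_2^q(\chi)$ as a character sum $\sum_{k \le x^2} (c_1 * c_2)(k) \chi^*(k)$ of length $\le x^2$ and apply the multiplicative large sieve
\[
\sum_{r \le R} \frac{r}{\phi(r)} \sumstar_{\chi^* \pmod r} \Big| \sum_{k \le x^2} (c_1 * c_2)(k) \chi^*(k) \Big|^2 \ll (R^2 + x^2)\, \|c_1 * c_2\|^2,
\]
noting $\|c_1 * c_2\|^2 \le \|c_1 \tau\|^2 \|c_2 \tau\|^2$ by Cauchy--Schwarz on the convolution. The $\phi(q)^{-1}$ weight, combined with $\sum_{q \le Q,\,r \mid q} \phi(q)^{-1} \ll (\log Q)/\phi(r)$, effectively trades the $r/\phi(r)$ in the large sieve for $1/\phi(r)$; a dyadic decomposition over $r \in [R_0, Q]$ then yields $E_2 \ll (\log Q)(Q + x^2/R_0) \|c_1 \tau\|^2 \|c_2 \tau\|^2$, which is at most $x^2 (\log x)^{-A} \|c_1 \tau\|^2 \|c_2 \tau\|^2$ on choosing $B$ (in $Q \le x^2 (\log x)^{-B}$) and $\kappa$ sufficiently large.

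The main obstacle is the large-sieve step: a naive application produces a $Q^2$ term, which reaches $x^4$ when $Q$ approaches $x^2$, far above the target $x^2 (\log x)^{-A}$. The essential saving is that the $\phi(q)^{-1}$ averaging, once we induce to primitive characters, replaces the $r/\phi(r)$ weight in the large sieve by $1/\phi(r)$, so that at each dyadic scale $r \sim R$ the large-sieve output becomes $R + x^2/R$ rather than $R^2 + x^2$; the troublesome $x^2/R_0$ at the bottom of the dyadic range is controllable by taking $R_0$ a sufficiently large power of $\log x$. The coprimality condition $(n,q)=1$ encountered when reducing to primitive characters is a secondary technical nuisance, handled by Möbius inversion and accounting for the divisor-function factor appearing in $\|c_i \tau\|^2$.
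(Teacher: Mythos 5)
Your proposal is correct and essentially reproduces the paper's argument: character orthogonality to reduce the variance to $\sum_q \phi(q)^{-1}\sum_{\chi\ne\chi_0}|T_1 T_2|^2$, a conductor split at $(\log x)^{\kappa}$, Siegel--Walfisz on $c_1$ for the small conductors, the multiplicative large sieve applied dyadically to the convolution $c_1 * c_2$ for the large conductors, and the Cauchy--Schwarz bound $\|c_1 * c_2\|^2 \le \|c_1\tau\|^2\|c_2\tau\|^2$. One minor stylistic difference is that the paper avoids M\"obius inversion when passing to the primitive inducer $\psi \pmod{h_1}$ of $\chi \pmod{h_1h_2}$: it simply keeps the truncation $\mathbbm{1}_{(n,h_2)=1}$ attached to the coefficient sequence, which only decreases the $\ell^2$ norm and lets the large sieve apply directly, thereby sidestepping the divisor-function bookkeeping you anticipate (and note that the $\tau$-weight on the right-hand side is fully consumed by the convolution bound, so it is not really ``spare capacity'' available to absorb M\"obius losses).
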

Corollary \ref{cor:SWsequence} follows from the more technical Theorem
\ref{thm:2} stated in Section \ref{section:BDH} which applies to
general sequences not necessarily satisfying the Siegel--Walfisz
condition.   
 
After using Corollary \ref{cor:SWsequence} to extract main terms in
our bilinear sum, we still need to estimate the sum of these main
terms.  In Friedlander and Iwaniec's treatment, this involves a difficult direct estimation (see Sections 10-26 in  \cite{FI}). We avail ourselves of their work in our estimates as well.

In our treatment, we compare our $a^2+p^4$ with the sequence given by $a^2 + p^2$, previously studied by Fouvry and Iwaniec \cite{FouI}, which helps to streamline our arguments.  For our result, we may also use the sequence $a^2+b^2$ where $b$ has no small prime factors below $(\log x)^A$, but using the result of Fouvry and Iwaniec is convenient and elegant.  We now move to more precise definitions. 

Since the number of $n = a^2 + p^4 \leq x$ with $p< x^{1/4}/\log^2 x$
is bounded by 
\begin{equation}
\sum_{p<\frac{x^{1/4}}{\log^2 x}} \;\;\sum_{a\leq \sqrt{x - p^4}} 1 \ll
\frac{x^{3/4}}{\log^3 x}, 
\end{equation}we may assume that $p \ge x^{1/4}/\log^2 x$.  
Let $x^{1/2}/\log^4 x \leq X \le x^{1/2}$ and $\eta = (\log x)^{-1}$, and
fix an
interval $I = (X, X(1+\eta)]$.  In our treatment of the bilinear
sum, we need the two sequences to behave alike even when restricted to
small sets, so that it becomes necessary to introduce proper
weights. Define the sequences $\A = \{a(n)\}$ and $\B = \{b(n)\}$ by 
\begin{equation}\label{eqn:Adef}
a(n) = \begin{cases}
\sum_{\substack{n = a^2+p^4\\p^2 \in I\\(a, p) = 1}} 2p\log p &\textup{ if $n\leq x$}\\ 
0 &\textup{ otherwise,}
\end{cases}
\end{equation}and
\begin{equation}
b(n) = \begin{cases}
\sum_{\substack{n = a^2 + p^2\\ p\in I\\(a, p) = 1}} \log p &\textup{ if $n\leq x$}\\ 
0 &\textup{ otherwise.}
\end{cases}
\end{equation}

Note that for any fixed natural number $d$ and any real $A>0$, we have
\begin{equation}
\sum_{d|n} a(n) = \sum_{d|n} b(n) + O_{d,A} \left(x (\log x)^{-A}\right).
\end{equation}It is this property and the choice of weights which
makes the sequence $\B$ suitable for our method.

Further, define 
\begin{equation}
\pi(\A) = \sum_p a(p),
\end{equation}and
\begin{equation}
\pi(\B) = \sum_p b(p).
\end{equation}

We claim that it suffices to show
\begin{prop} \label{prop:main}
With notation as above,
\begin{equation}
\pi(\A) - \pi(\B) \ll \frac{1}{(\log x)^{2-\epsilon}}\mu(I),
\end{equation}
where
\begin{equation}
\mu(I) = \int_I \sqrt{x - t^2} dt.
\end{equation}
\end{prop}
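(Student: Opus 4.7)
The plan is to apply a Vaughan-type combinatorial identity to $\Lambda$ so as to reduce, via partial summation, the estimation of $\pi(\A)-\pi(\B)$ to the control of Type I and Type II sums against the differenced weight $a(n)-b(n)$. The Type I contributions will be handled directly via the near-identity
\[
\sum_{d\mid n}a(n)=\sum_{d\mid n}b(n)+O_{d,A}\!\bigl(x(\log x)^{-A}\bigr)
\]
noted earlier, while the Type II contributions will be controlled by the variance estimate of Corollary~\ref{cor:SWsequence}.

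Concretely, Vaughan's identity with parameter $D=(\log x)^C$ yields (up to negligible terms)
\[
\sum_n \Lambda(n)\bigl(a(n)-b(n)\bigr)=T_I-T_{II},
\]
where
\[
T_I=\sum_{d\le D}c_d\sum_{d\mid n}\bigl(a(n)-b(n)\bigr),\qquad |c_d|\ll\tau(d)\log d,
\]
and $T_{II}$ is a dyadic sum of bilinear pieces $\sum_{M<m\le 2M}\alpha_m\sum_n\beta_n(a(mn)-b(mn))$ with $M$ in the ``middle'' range dictated by the identity. The matching relation above gives $|T_I|\ll x(\log x)^{-A+O(1)}$ for any $A$, once one verifies (by direct analysis, using Siegel--Walfisz for $\B$ and a standard input for $\A$) that the implied constant can be taken polynomial in $\tau(d)$; this more than suffices, as $D$ is only polylogarithmic.

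The Type II bound is the main obstacle. Here I would open the weights according to their defining parametrizations, writing $a(mn)-b(mn)$ as a sum over $(p,a)$ of $p$-weighted indicators of $mn-a^2=p^{2k}$ for $k\in\{1,2\}$. After swapping orders of summation and isolating the bilinear $(m,n)$-convolution for each fixed $(p,a)$, the problem becomes one of showing that $(\alpha*\beta)(N)$ matches its expected mean in the residue class $N\bmod p^{2k}$, uniformly for $N=a^2+p^{2k}$ in the relevant ranges. An application of Cauchy--Schwarz in the outer variables $(p,a)$ then reduces the estimate to the mean square
\[
\sum_{q\le Q}\sideset{}{^*}\sum_{a\bmod q}\bigl|S(x;a,q)-S(x;q)\bigr|^2,
\]
precisely the quantity bounded by Corollary~\ref{cor:SWsequence} with $c_1=\alpha$, $c_2=\beta$. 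The moduli required satisfy $q\ll p^{2k}\ll x$, comfortably below the permissible $x^{2-\epsilon}$, and Vaughan's identity arranges that $\alpha$ (a truncated $\Lambda$ or $\mu$) satisfies the Siegel--Walfisz condition. Invoking the Corollary with $A$ taken sufficiently large then yields $T_{II}\ll\mu(I)(\log x)^{-2+\epsilon}$, and a standard partial summation recovers the claimed bound on $\pi(\A)-\pi(\B)$.
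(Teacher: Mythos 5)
The proposal has several serious gaps, and the underlying strategy would not carry through.

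First, Vaughan's identity with a merely polylogarithmic parameter $D=(\log x)^C$ does not produce bilinear (Type~II) sums in a usable range. Vaughan's decomposition yields Type~II terms in which the short variable ranges anywhere between $D$ and $x/D$, i.e.\ essentially the entire interval. In contrast, the bound you would eventually need for the bilinear piece — and the one the paper actually proves, Proposition~\ref{prop:bilinear2} — only holds for $x^{1/4+\xi}\le N\le x^{1/2-\delta}$. That restriction is not cosmetic: the key input in the middle-$d$ range (Friedlander--Iwaniec's Proposition~14.1, applied in Section~\ref{sec:middled}) has nontrivial content only when the two variables are balanced against the modulus in the right way, and outside this window the bound degrades to trivial. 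Precisely to manufacture bilinear sums confined to this window, the paper uses the Buchstab-type decomposition of Lemmas~\ref{lem:sieve1} and~\ref{lem:sieve2} (following Heath-Brown's $x^3+2y^3$ paper), in which the ``too short'' and ``too long'' ranges are absorbed into linear (Type~I) sums by iterating Buchstab and appealing to the level-of-distribution result, Proposition~\ref{prop:leveldis}. A Vaughan decomposition with $D=(\log x)^C$ cannot reproduce this control, and raising $D$ to a power of $x$ would also fail because the resulting Type~II range $(D,x/D)$ is still far wider than the window where the harmonic analysis over $\mathbb{Z}[i]$ works.

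Second, and more fundamentally, the proposal treats Corollary~\ref{cor:SWsequence} as though it directly bounds the bilinear sums, but it does not. What the Corollary gives is a variance estimate for the off-diagonal fluctuations $Y(a,D)-Y(D)$ after the bilinear form is transported to Gaussian integers, squared by Cauchy--Schwarz over the $w$-variable, and reorganised by the parameter $\Delta=\tIm\,\bar z_1 z_2$ (Section~\ref{section:bilinearpf1} and Proposition~\ref{prop:EMN}). After that extraction one is left with a \emph{main term} — a sum involving $Z(b^2,D)-Z(a,D)$ over residue classes that are and are not squares — and controlling it requires an entirely separate and substantial argument: quadratic reciprocity manoeuvres, Friedlander--Iwaniec's Proposition~14.1 and Proposition~23.1, their Theorem~$2^\psi$, and Mitsui's prime number theorem for Gaussian arithmetic progressions (Sections~\ref{sec:middled} and~\ref{sec:smalllarged}). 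Your sketch, by going directly from ``Cauchy--Schwarz'' to ``Corollary~\ref{cor:SWsequence}'', skips the factorization into $\mathbb{Z}[i]$, the isolation of $\Delta$, and the entire main-term estimate, which is where most of the work and all of the deep Friedlander--Iwaniec input lives. The statement that ``the problem becomes one of showing that $(\alpha*\beta)(N)$ matches its expected mean in the residue class $N\bmod p^{2k}$'' is not a correct reduction: the relevant moduli are the $\Delta$'s arising from pairs of Gaussian integers, not the $p^{2k}$'s, and the sum must be compared against the companion sequence $\B$ rather than against a mean value.

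In short: the Type~I portion of your plan is fine in spirit (and matches Proposition~\ref{prop:fundlem} via the level of distribution), but the Type~II portion both uses the wrong combinatorial decomposition to reach a bilinear sum and omits the Gaussian-integer analysis that constitutes the heart of the proof.
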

Note that $\mu(I) \le \eta X\sqrt{x} \le \eta x$ for all values of $X$, and
$\mu(I) \gg \eta X\sqrt{x}$ for $X \le \sqrt{x}/2$ and $\mu(I) \gg \eta^{3/2}
x$ for $\sqrt{x}/2 \le X \le \sqrt{x}$. 

We verify that our main Theorem follows from Proposition
\ref{prop:main} in the next section.  Most of this paper will be
devoted to proving the Proposition. 

We conclude this introduction by recording one convention of notation.
We shall use the familiar convention that the positive number
$\varepsilon$ may vary between occurrences.  This allows us to write
$x^{\epsilon}\log x\ll x^{\epsilon}$, for example.
\\\\
{\bf Acknowledgement.} This work was supported by EPSRC grant 
EP/K021132X/1. We also thank Pierre Le Boudec for a careful reading 
of an earlier version of this paper. This resulted in the detection of a
significant oversight, which has now been corrected.

\section{Setting up the sieve}
Let us first verify that Theorem 1 follows from Proposition
\ref{prop:main}.  Note that the condition $(a, p) = 1$ in the
definition of $b(n)$ may be removed since 
\begin{equation}
\sum_{p\in I} \log p \sum_{\substack{a\le \sqrt{x-p^2}\\p|a}} 1 \ll x^{1/2+\epsilon}.
\end{equation}  Then, by the work of Fouvry and Iwaniec \cite{FouI}
and summation by parts, we have that 
\begin{equation}
\pi(\B) = \frac{\nu\mu(I)}{\log x}
\left(1+O\bfrac{1}{\log^{1-\epsilon}x}\right), 
\end{equation}for $\nu$ as given in Theorem \ref{thm:1}.  Thus
Proposition \ref{prop:main} gives us that 
\begin{equation}\label{eqn:piA}
\pi(\A) = \frac{\nu\mu(I)}{\log x}
\left(1+O\bfrac{1}{\log^{1-\epsilon}x}\right). 
\end{equation}  Our main result then follows by partial summation,
possible since the length of $I$ is short.  To be precise, let $I_j =
(X_j, X_j(1+\eta)]$ be disjoint intervals for $1\le j\le m$ such that  
\begin{equation}
\cup_j  I_j = (Y, x^{1/2}],
\end{equation}
where $Y \gg \frac{x^{1/2}}{\log^2 x}$.  Further, let $\A_j$ be
defined as in \eqref{eqn:Adef} with $I = I_j$.  Recall that $\eta =
\frac{1}{\log x}$, and note that the number of elements $a^2+p^4 \le
x$ where $p|a$ with $p^2\in I$ is bounded by  
$$\sum_{p^2 \in I} \frac{\sqrt{x}}{p} 
\ll x^{1/2+\epsilon}.
$$  Hence,
\begin{align}
&\#\{a^2+p^4 \leq x: a^2 + p^4 \textup{ is prime and }a>0\}\\
& = \sum_j \frac{1}{\sqrt{X_j}\log X_j}\pi(\A_j)
\left(1+O\bfrac{1}{\log x}\right) + O(x^{1/2+\epsilon})\notag \\ 
& = \frac{2\nu + O\bfrac{1}{\log^{1-\epsilon}x}}{\log^2 x} \sum_j
\frac{\mu(I_j)}{\sqrt{X_j}} + O(x^{1/2+\epsilon}) \notag \\ 
& = \frac{2\nu  +O\bfrac{1}{\log^{1-\epsilon}x}}{\log^2 x}
\int_Y^{\sqrt{x}} \sqrt{x - t^2} \frac{dt}{\sqrt{t}}  +
O(x^{1/2+\epsilon})\notag \\ 
& = \frac{4\nu x^{3/4}}{\log^2 x} \int_0^1 \sqrt{1 - t^4} dt
\left(1+O\bfrac{1}{\log^{1-\epsilon}x}\right). \notag \\ 
\end{align}
Thus Theorem \ref{thm:1} follows from Proposition \ref{prop:main}.  

We now fix some basic notation.  For $\C = \{c(n)\}$ any sequence
supported on $(\sqrt{x},x]$, let 
\begin{equation}
\pi(\C) = \sum_{p\leq x} c(p),
\end{equation}
\begin{equation}
\C_d = \{c(dn): n \in \mathbb{N} \},
\end{equation}
\begin{equation}
\#\C_d = \sum_{d|n} c(n),
\end{equation}and
\begin{equation}
R_d(\C) = |\# \C_d - M_d(\C)|,
\end{equation}for some $M_d(\C)$ depending on $d$ and $\C$.  Note that
the use of $\# \C_d$ here denotes the sum of the elements in $\C_d$
rather than the number of elements in $\C_d$.

We shall prove Proposition \ref{prop:main} by applying the same
sieving procedure to both sequences $\A$ and $\B$.  As mentioned in
the Introduction, this requires a level of distribution result and an
understanding of certain bilinear forms.  We refer the reader to
Friedlander and Iwaniec's asymptotic sieve for primes \cite{FI3},
Harman's alternative sieve \cite{Har} and Heath-Brown's proof of
primes of the form $x^3+2y^3$ \cite{HB} for several perspectives on
this.  Here, we develop what we need from scratch along the lines of
\cite{HB}.  For our application, this eases some technical details
involving the bilinear sum.  We begin by stating a level of
distribution result for $\A$ and $\B$. 
\begin{prop}\label{prop:leveldis}
Let 
\begin{equation}
g(d)= \frac{\rho(d)}{d},
\end{equation}
where $\rho(d)$ denotes the number of solutions to
\begin{equation}
a^2 + 1 \equiv 0 \modd{d}.
\end{equation}Now define
\begin{equation}
M_d(\C) = g(d) \mu(I).
\end{equation}Then for any constants $A \geq 0$ and  $k \geq 0$, there
exists a constant $B = B(A, k)$ such that for $D =
\frac{x^{3/4}}{(\log x)^B}$, we have  
\begin{equation}
\sum_{d \leq D} \tau^k (d) R_d(\C) \ll \frac{x}{(\log x)^A},
\end{equation}for both $\C = \A$ and $\C = \B$.
\end{prop}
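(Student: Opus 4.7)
The plan is to follow the template of Fouvry--Iwaniec \cite{FouI} for $\B$ and of Friedlander--Iwaniec \cite{FI} for $\A$, since both cases reduce to counting $a^2 + y^2 \equiv 0 \pmod{d}$ with $y = p$ (for $\B$) or $y = p^2$ (for $\A$). For both sequences one writes
\begin{equation}
\#\C_d = \sum_{y} w(y)\, \#\{a \leq A_y : a^2 + y^2 \equiv 0 \pmod{d}\},
\end{equation}
where $y$ ranges over primes or squared primes, $w$ is the prescribed weight, and $A_y = \sqrt{x - y^2}$. For each $y$ coprime to $d$ the inner count splits over the $\rho(d)$ residue classes solving $r^2 + y^2 \equiv 0 \pmod{d}$ as $\rho(d) A_y / d + \mathcal{E}(d, y)$, where $\mathcal{E}(d,y)$ is the discrepancy in residue classes modulo $d$. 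Summed against $w(y)$, the term $\rho(d)A_y/d$ reconstructs $g(d)\mu(I)$ up to a prime-number-theorem-in-short-intervals error controlled by Siegel--Walfisz; weighted by $\tau^k(d)$, this is absorbed via $\sum_{d \leq D} \tau^{2k}(d) \ll D(\log D)^{O(1)}$ after a Cauchy--Schwarz.

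The discrepancies $\mathcal{E}(d, y)$ are next expanded by Poisson summation in $a$ modulo $d$, giving
\begin{equation}
\mathcal{E}(d, y) = \sum_{0 < |h| \leq H} c_h(d, A_y) \sum_{\substack{r \;(\mathrm{mod}\; d)\\ r^2 + y^2 \equiv 0}} e(hr/d) + \mathrm{tail}(H),
\end{equation}
with the truncation $H$ a small power of $\log x$. After multiplying by $w(y)$, summing over $y$, and applying Cauchy--Schwarz in $d$, the problem reduces to bounding on average a mean square of Kloosterman-type sums in the variables $d$, $h$, and---in the $\A$ case---the extra variables arising from a Vaughan or Heath--Brown decomposition of $\Lambda$ used to detect the prime condition on $y$. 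These are exactly the exponential-sum averages already handled by Friedlander--Iwaniec in their level-of-distribution result for $a^2 + b^4$ and by Fouvry--Iwaniec in their treatment of $a^2 + p^2$, both of which reach level $D = x^{3/4}$ via spectral bounds for sums of Kloosterman sums.

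The main obstacle is preserving the level $D = x^{3/4}/(\log x)^B$ through the combinatorial decomposition in the $\A$ case: the Vaughan cutoffs must be chosen so that every resulting Type~I and Type~II sum lies in the regime where the Friedlander--Iwaniec estimates apply. Since both types reduce, after Poisson in $a$, to the same family of exponential sums as in \cite{FI}, any standard admissible split works, and the bound $\ll x/(\log x)^A$ follows upon taking $B = B(A, k)$ large enough to absorb both the $\tau^k(d)$ weight (via Cauchy--Schwarz) and the logarithmic losses inherited from Siegel--Walfisz. The case of $\B$ is entirely parallel but simpler, since no prime-detection identity is needed: the weight $\log p$ on a single variable $y = p$ is exactly what Fouvry--Iwaniec already input into their machinery, and one reads off the conclusion directly from \cite{FouI}.
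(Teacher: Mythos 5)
Your proposal re-derives from scratch what the paper simply cites as a black box.  The key input is Friedlander--Iwaniec \cite{FI2} (``Gaussian sequences in arithmetic progressions''), which gives the level of distribution $D=x^{3/4}(\log x)^{-B}$ for sequences $a_n=\sum_{a^2+m^2=n}\lambda_m$ with \emph{arbitrary} bounded coefficients $\lambda_m$; the paper defines the normalized sequences $\tilde\A,\tilde\B$, quotes \cite{FI2}, and transfers to $\A,\B$ directly.  Because the $\lambda_m$ are arbitrary, the fact that they are supported on $m=p$ or $m=p^2$ is irrelevant to the level-of-distribution, and no Vaughan or Heath--Brown identity is needed to detect primality of the short variable.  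Your ``main obstacle'' --- preserving the level $x^{3/4}$ through a Type~I/Type~II split --- therefore does not exist in the paper's argument; the remark that ``any standard admissible split works'' is a hand-wave at exactly the point where introducing the decomposition creates a difficulty that \cite{FI2} shows is absent.

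There is also a gap in the treatment of the $\tau^k(d)$ weight.  You propose Cauchy--Schwarz ``in $d$'' with $\sum_{d\le D}\tau^{2k}(d)\ll D(\log D)^{O(1)}$ as one factor.  The other factor is then $\bigl(\sum_{d\le D}R_d(\C)^2\bigr)^{1/2}$, and with $D\asymp x^{3/4}(\log x)^{-B}$ the first factor is $x^{3/8+o(1)}$, so you would need $\bigl(\sum_d R_d^2\bigr)^{1/2}\ll x^{5/8}(\log x)^{-A}$; but from $\sum_d R_d\ll x(\log x)^{-A'}$ together with the pointwise bound $R_d\ll \#\C_d+M_d(\C)\ll g(d)x(\log x)^{O(1)}$ one only gets $\sum_d R_d^2\ll x^2(\log x)^{-A'+O(1)}$, giving $x^{11/8+o(1)}$ overall, far beyond the target.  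The paper's Cauchy--Schwarz is applied differently: writing $\tau^k R_d=(\tau^k R_d^{1/2})(R_d^{1/2})$ yields
\begin{equation}
\sum_{d\le D}\tau^k(d)R_d(\C)\le\Bigl(\sum_{d\le D}\tau^{2k}(d)R_d(\C)\Bigr)^{1/2}\Bigl(\sum_{d\le D}R_d(\C)\Bigr)^{1/2},
\end{equation}
and the first factor is controlled via $R_d\le\#\C_d+M_d(\C)$ together with the elementary divisor rearrangement of Lemma \ref{lem:tau} (every $n$ has a divisor $d\le n^{1/2^j}$ with $\tau(n)\le 2^{2^j-1}\tau(d)^{2^j}$), which reduces $\sum_{d\le D}\tau^{2k}(d)\#\C_d$ to a sum over $d\le\sqrt{x}$ where the trivial bound $\#\C_d\ll g(d)x$ is valid.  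Without this specific arrangement the $\tau^k$ factor is not absorbed while retaining the required error $x(\log x)^{-A}$.
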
 For $\C = \B$, the Proposition holds in the larger range $D
\leq \frac{x}{(\log x)^B}$, but we do not require this.  Proposition
\ref{prop:leveldis} is a direct consequence of a result of Friedlander
and Iwaniec \cite{FI2} to allow for the weights we need and to include
the $\tau(d)^k$ factor.  The derivation of Proposition
\ref{prop:leveldis} is given in Section \ref{section:leveldis}. 
 
Fix for the rest of the paper $\delta = (\log x)^{\varpi-1}$ for some
small constant $\varpi>0$ ($\varpi$ must be smaller than the $\epsilon$ appearing in the statement of Theorem \ref{thm:1}), and $Y = x^{1/3+1/48}$.  In fact, 
the $\frac {1}{48}$ in the
previous definition can be replaced with any positive number less than
$\frac{1}{24}$.  The following Lemma begins our sieving procedure. 
\begin{lem}\label{lem:sieve1}
For any $x^\delta < Y < x^{1/2-\delta}$ and $\C = \A$ and $\C = \B$, we have that
\begin{equation}
\pi(\C) = S_1(\C) - S_2(\C) - S_3(\C) + O\left(\frac{\delta}{\log x}
  \mu(I)\right), 
\end{equation}where
\begin{align}
S_1(\C) &= S(\C, x^\delta)\\
S_2(\C) &= \sum_{x^\delta\le p < Y} S(\C_p, p)\\
S_3(\C) &= \sum_{Y \le p< x^{1/2-\delta}} S(\C_p, p).
\end{align}
\end{lem}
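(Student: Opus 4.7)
The plan is to apply Buchstab's identity to connect $\pi(\C)$ with $S_1(\C) - S_2(\C) - S_3(\C)$, and to bound the resulting semiprime discrepancy using a one-dimensional upper-bound sieve in combination with Proposition \ref{prop:leveldis}.

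Two applications of Buchstab's identity, splitting the Buchstab sum at $Y$, give
\begin{equation}
S(\C, x^{1/2-\delta}) = S(\C, x^\delta) - \sum_{x^\delta \le p < x^{1/2-\delta}} S(\C_p, p) = S_1(\C) - S_2(\C) - S_3(\C).
\end{equation}
On the other hand, since $c(n)$ is supported on $(\sqrt{x},x]$, any $n \le x$ whose prime factors all exceed $x^{1/2-\delta}$ has at most two of them (a third such factor would force $n \ge x^{3/2-3\delta} > x$), and primes $p$ with $c(p) \ne 0$ automatically satisfy $p > \sqrt{x} > x^{1/2-\delta}$. Hence
\begin{equation}
S(\C, x^{1/2-\delta}) = \pi(\C) + T(\C), \qquad T(\C) := \sum_{x^{1/2-\delta} \le p \le \sqrt{x}} S(\C_p, p),
\end{equation}
and it suffices to show $T(\C) \ll \delta\mu(I)/\log x$.

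For each prime $p$ in the range, I would sieve $\C_p$ by primes below $p$. By Proposition \ref{prop:leveldis}, $\C_p$ has expected size $g(p)\mu(I)$ and inherits level of distribution $D/p \gg x^{1/4}/(\log x)^B$, which is polynomial in $p$; moreover $\sum_{q<p} g(q) \sim \log\log p$, so this is a dimension-one sieve problem. A standard upper bound (Selberg or Brun type) then yields
\begin{equation}
S(\C_p, p) \ll \frac{g(p)\mu(I)}{\log p} + \sum_{d \le D/p} |R_{pd}(\C)|.
\end{equation}
Summing over $p$, the main term contributes $\ll (\mu(I)/\log x)\sum_p g(p) \ll \delta\mu(I)/\log x$ by Mertens' theorem (noting $g(p) \le 2/p$ and $\sum_{x^{1/2-\delta} \le p < \sqrt{x}} 1/p \asymp \delta$). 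The error aggregates as $\sum_p \sum_d |R_{pd}(\C)| \le \sum_{e\le D} \tau(e)|R_e(\C)| \ll x/(\log x)^A$ via Proposition \ref{prop:leveldis} with $k=1$; since $\mu(I) \gg x/(\log x)^{3/2}$ and $\delta = (\log x)^{\varpi-1}$, this error is $\ll \delta\mu(I)/\log x$ for $A$ chosen sufficiently large.

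The main obstacle is executing the sieve upper bound in the third paragraph so that it saves the factor $1/\log p$ rather than merely bounding $S(\C_p, p) \le \#\C_p$: this relies crucially on the level $D/p$ being polynomial in $p$, which forces the dimension-one Selberg construction to yield $V(p) \asymp 1/\log p$. The error aggregation then fits exactly within the budget provided by Proposition \ref{prop:leveldis}. The Buchstab identity bookkeeping and Mertens-type summations are otherwise routine.
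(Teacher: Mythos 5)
Your proposal is correct and follows the same route as the paper: isolate the Buchstab tail $\sum_{x^{1/2-\delta}\le p\le x^{1/2}} S(\C_p,p)$ and bound it by a one-dimensional upper-bound sieve combined with Proposition~\ref{prop:leveldis}. The only technical divergence is that the paper first uses the trivial relaxation $S(\C_p,p)\le S(\C_p,x^{1/10})$, so that Selberg's sieve is applied with a fixed sifting limit $x^{1/10}$ and fixed level $x^{1/5}$ --- a slightly cleaner way to get the $1/\log x$ saving uniformly in $p$ than your variable level $D/p$, which requires noting that $s=\log(D/p)/\log p$ stays bounded away from $0$. One small slip: $\mu(I)\gg x(\log x)^{-3/2}$ is too optimistic --- when $X\asymp \sqrt{x}/\log^4 x$ one only gets $\mu(I)\gg \eta X\sqrt{x}\gg x(\log x)^{-5}$ --- but any polylogarithmic lower bound suffices, since $A$ in Proposition~\ref{prop:leveldis} is at your disposal, so the conclusion is unaffected.
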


\begin{proof}
By Buchstab's identity, we have
\begin{equation}
\pi(\C) = S(\C, x^{1/2}) = S_1(\C) - S_2(\C) - S_3(\C) -
\sum_{x^{1/2-\delta}\leq p \leq x^{1/2}}S(\C_p, p). 
\end{equation}Using Selberg's upper bound sieve, we have that
\begin{align}
\sum_{x^{1/2-\delta}\leq p \leq x^{1/2}} S(\C_p, p) 
&\leq \sum_{x^{1/2-\delta}\leq p \leq x^{1/2}}S(\C_p, x^{1/10}) \notag \\
&\ll \sum_{x^{1/2-\delta}\leq p \leq x^{1/2}} \frac{\mu(I)}{p \log x} +
\sum_{x^{1/2-\delta}\leq p \leq x^{1/2}}\sum_{d \leq x^{1/5}} \tau_3(d)
R_{dp}(\C) \notag  \\ 
&\ll \frac{\delta \mu(I)}{\log x} + \frac{x}{\log^A x},
\end{align}for any $A \geq 0$ by Proposition \ref{prop:leveldis}.
\end{proof}
While $S_1(\C)$ may be handled via the Fundamental Lemma, and
$S_3(\C)$ can be readily written in terms of a bilinear form in the
right range, $S_2(\C)$ requires more attention. 

\begin{lem}\label{lem:sieve2}
With notation as in Lemma \ref{lem:sieve1} and $n_0 = \left[\frac{\log
  Y}{\delta \log x}\right]$, we have 
\begin{equation}\label{eqn:S2decomp1}
S_2(\C) = \sum_{1\leq n\leq n_0} (-1)^{n-1}(T^{(n)}(\C) - U^{(n)}(\C)),
\end{equation}where
\begin{equation}
T^{(n)}(\C) = \sum_{\substack{x^\delta \leq p_n <...<p_1<Y\\ p_1...p_n
    < Y}} S(\C_{p_1...p_{n}}, x^\delta), 
\end{equation}and 
\begin{equation}
U^{(n)}(\C) = \sum_{\substack{x^\delta\le p_{n+1} <...<p_1<Y \\
    p_1...p_n < Y \leq p_1...p_{n+1}}} S(\C_{p_1...p_{n+1}},
p_{n+1}). 
\end{equation}

\end{lem}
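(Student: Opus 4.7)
The plan is to prove \eqref{eqn:S2decomp1} by iterating Buchstab's identity
\begin{equation}
S(\C',z)=S(\C',w)-\sum_{w\le p<z}S(\C'_p,p)\qquad (w<z)
\end{equation}
starting from $S_2(\C)=\sum_{x^\delta\le p_1<Y}S(\C_{p_1},p_1)$. First I would apply Buchstab with $w=x^\delta$ and $z=p_1$ inside each inner sift, yielding
\begin{equation}
S_2(\C)=T^{(1)}(\C)-\sum_{x^\delta\le p_2<p_1<Y}S(\C_{p_1p_2},p_2).
\end{equation}
I would then split the remaining double sum according to the size of $p_1p_2$: the terms with $p_1p_2\ge Y$ match the definition of $U^{(1)}(\C)$ exactly, while the terms with $p_1p_2<Y$ form a residual sum of the same shape as $S_2(\C)$ but now with two descending prime factors and the additional constraint $p_1p_2<Y$. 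This gives the base case $S_2(\C)=T^{(1)}(\C)-U^{(1)}(\C)-S^{(1)}(\C)$.

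Next I would iterate inductively. Applying Buchstab to each $S(\C_{p_1\cdots p_n},p_n)$ appearing in the residual
\begin{equation}
S^{(n-1)}(\C)=\sum_{\substack{x^\delta\le p_n<\cdots<p_1<Y\\p_1\cdots p_n<Y}}S(\C_{p_1\cdots p_n},p_n),
\end{equation}
and then splitting the resulting $(n+1)$-fold sum according to whether $p_1\cdots p_{n+1}<Y$ or $p_1\cdots p_{n+1}\ge Y$, yields the telescoping identity $S^{(n-1)}(\C)=T^{(n)}(\C)-U^{(n)}(\C)-S^{(n)}(\C)$. Substituting these back one step at a time produces the alternating sum claimed in \eqref{eqn:S2decomp1}, plus a terminal remainder of the form $(-1)^{n_0}S^{(n_0)}(\C)$.

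Finally, I would verify that this remainder is empty. For $S^{(n_0)}(\C)$ to contain any term we would need a descending chain of primes $p_1>\cdots>p_{n_0+1}\ge x^\delta$ with $p_1\cdots p_{n_0+1}<Y$, which forces $x^{(n_0+1)\delta}\le p_1\cdots p_{n_0+1}<Y$, i.e.\ $(n_0+1)\delta\log x<\log Y$. But $n_0=[\log Y/(\delta\log x)]$ implies $n_0+1>\log Y/(\delta\log x)$, a contradiction. Hence $S^{(n_0)}(\C)=0$ and the lemma follows. The argument is purely combinatorial and there is no genuine analytic obstacle; the main points requiring care are the bookkeeping of the strict inequalities $p_{i+1}<p_i$ which legitimise each Buchstab step, and the final size estimate $x^{(n_0+1)\delta}>Y$ used to eliminate the remainder.
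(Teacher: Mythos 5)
Your proof is correct and follows the same approach as the paper: the paper defines $V^{(n)}(\C)$ (your $S^{(n-1)}(\C)$), observes $S_2(\C)=V^{(1)}(\C)$, and uses precisely the recursion $V^{(n)}=T^{(n)}-U^{(n)}-V^{(n+1)}$ derived from Buchstab's identity plus the split at $p_1\cdots p_{n+1}\gtrless Y$. You additionally spell out the verification that $V^{(n_0+1)}(\C)$ is empty, which the paper leaves implicit; this is a welcome clarification but not a different argument.
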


\begin{proof}
Let 
\begin{equation}
V^{(n)}(\C) = \sum_{\substack{x^\delta \le p_n<...<p_1<Y \\ p_1...p_n
    < Y}} S(\C_{p_1...p_n}, p_n). 
\end{equation}
The proof of \eqref{eqn:S2decomp1} follows immediately upon observing
that $S_2(\C) = V^{(1)}(\C)$ and from the identity 
\begin{equation}
V^{(n)}(\C) = T^{(n)}(\C) - U^{(n)}(\C) - V^{(n+1)}(\C).
\end{equation}
\end{proof}

By Lemmas \ref{lem:sieve1} and \ref{lem:sieve2}, in order to prove
Proposition \ref{prop:main}, it suffices to prove the following two
propositions. 

\begin{prop}\label{prop:fundlem}
Let $Q$ be a set of squarefree numbers not exceeding $Y$.  Then for any $A>0$,
\begin{equation}
|\sum_{q\in Q} S(\A_q, x^\delta) - \sum_{q\in Q} S(\B_q, x^\delta)|
\ll_A \frac{x}{\log^A x}. 
\end{equation}
\end{prop}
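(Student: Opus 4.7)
The plan is to apply the Fundamental Lemma of sieve theory separately to $S(\A_q,x^\delta)$ and $S(\B_q,x^\delta)$ for each $q\in Q$, and then to exploit the fact that both sequences share the same local density function $M_d(\C)=g(d)\mu(I)$ from Proposition \ref{prop:leveldis}. Because $\delta=(\log x)^{\varpi-1}$ tends to zero, sifting by primes below $x^\delta$ is a small-sieve problem, for which the main-term combinatorial error is super-polynomially small in $\log x$.

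Concretely, I would introduce Rosser--Iwaniec upper and lower bound weights $\lambda_d^{\pm}$, supported on squarefree $d\mid P(x^\delta)$ coprime to $q$ with $d\le D/q$, where $D=x^{3/4}(\log x)^{-B}$ is the level from Proposition \ref{prop:leveldis}. These weights satisfy $|\lambda_d^{\pm}|\le 1$ and yield
\begin{equation}
S(\C_q,x^\delta)=g(q)\mu(I)V_q(x^\delta)\bigl(1+O(e^{-s})\bigr)+\sum_{\substack{d\mid P(x^\delta),\,(d,q)=1\\ d\le D/q}}\lambda_d^{\pm}R_{qd}(\C),
\end{equation}
where $V_q(z)=\prod_{p<z,\,p\nmid q}(1-g(p))$ and $s=\log(D/q)/\log x^\delta$. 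Since $q\le Y=x^{1/3+1/48}$ and $\log x^\delta=(\log x)^{\varpi}$, we have $s\gg (\log x)^{1-\varpi}$, so $e^{-s}$ beats any negative power of $\log x$. Summing over $q\in Q$, the main term $\mu(I)\sum_{q\in Q}g(q)V_q(x^\delta)$ is identical for $\C=\A$ and $\C=\B$, and therefore cancels in the difference.

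It remains to bound the remainder contribution. Writing $m=qd$, each $m\le D$ is squarefree and admits at most $\tau(m)$ factorisations into a squarefree $q\in Q$ and a squarefree $d\mid P(x^\delta)$ coprime to $q$, so
\begin{equation}
\sum_{q\in Q}\sum_{\substack{d\mid P(x^\delta),\,(d,q)=1\\ d\le D/q}}|R_{qd}(\C)|\le \sum_{m\le D}\tau(m)|R_m(\C)|\ll_A \frac{x}{(\log x)^A}
\end{equation}
by Proposition \ref{prop:leveldis} with $k=1$ and a suitable $B=B(A,1)$. Applying this with $\C=\A$ and $\C=\B$ and combining with the cancellation of main terms yields the claimed bound.

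The only point requiring any care is the uniformity of the Fundamental Lemma in $q$: one needs $\log(D/q)$ to dominate $\log x^\delta$ for every $q\in Q$, which is precisely why $\delta$ is taken to be $(\log x)^{\varpi-1}$. Beyond this there is no structural obstacle — the common local density of $\A$ and $\B$ guarantees exact cancellation of main terms, and all of the arithmetic work is absorbed by the level-of-distribution input of Proposition \ref{prop:leveldis}.
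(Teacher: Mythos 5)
Your proposal is correct and follows essentially the same route as the paper: apply the Fundamental Lemma (with identical local density $g(d)\mu(I)$ for both $\A$ and $\B$), observe that the sieve main term is the same for both sequences, and absorb the remainder into Proposition~\ref{prop:leveldis} via the $\tau(m)$ multiplicity of factorizations $m=qd$. The paper takes a fixed sieve level $x^{1/4}$ so that $s=1/(4\delta)$ is uniform in $q$, whereas you take level $D/q$, but this is an immaterial variation; likewise your use of $V_q$ rather than $V$ is a cosmetic difference (they coincide in the application since elements of $Q$ have all prime factors at least $x^\delta$), and the fact that the $(1+O(e^{-s}))$ factor does not cancel exactly between $\A$ and $\B$ is handled as you indicate, since $e^{-s}$ beats every power of $\log x$.
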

Note that Proposition \ref{prop:fundlem} immediately implies that
\begin{equation}
|S_1(\A) - S_1(\B)| \ll_A \frac{x}{\log^A x}.
\end{equation}
Since $n_0 \asymp 1/\delta \ll \log x$, Proposition \ref{prop:fundlem}
also implies that 
\begin{equation}
\sum_{1\leq n\leq n_0} |T^{(n)}(\A) - T^{(n)}(\B)| \ll_A \frac{x}{\log^A x}.
\end{equation}

The rest of the pieces from the decompositions in Lemmas
\ref{lem:sieve1} and \ref{lem:sieve2} are handled below. 
\begin{prop} \label{prop:bilinear1}
For any $A>0$ and $n\geq 3$,
\begin{align}
|S_3(\A) - S_3(\B)| &\ll \frac{x}{\log^A x}, \textup{ and}\\
|U^{(n)}(\A)-U^{(n)}(\B)| &\ll \frac{x}{\log^A x}.
\end{align}For $n\leq 2$,
\begin{equation}
|U^{(n)}(\A)-U^{(n)}(\B)| \ll \frac{\delta }{\log x}\mu(I).
\end{equation}

\end{prop}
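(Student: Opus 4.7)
The plan is to express $S_3(\C)$ and $U^{(n)}(\C)$ as bilinear sums of the form $\sum_q \alpha_q \sum_m \beta_m c(qm)$ over suitable moduli $q$, and then compare the bilinear sums for $\A$ and $\B$ using the equidistribution result. For $S_3$, the modulus $q = p$ is a single prime in $[Y, x^{1/2-\delta})$; for $U^{(n)}$, we group $q = p_1 \cdots p_{n+1}$ into one modulus lying in $[Y, Y^2)$. In either case $q \ll x^{2/3+1/24}$, comfortably below the $x^2/(\log x)^B$ threshold allowed by Corollary \ref{cor:SWsequence} and its generalization Theorem \ref{thm:2}.

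For $S_3(\A) - S_3(\B)$ and for $U^{(n)}(\A) - U^{(n)}(\B)$ with $n \geq 3$, I would apply Cauchy--Schwarz in the style of Linnik's dispersion method:
\[
\Bigl| \sum_q \alpha_q \sum_m \beta_m (a(qm) - b(qm)) \Bigr|^2 \leq \Bigl(\sum_q |\alpha_q|^2\Bigr) \sum_q \Bigl| \sum_m \beta_m (a(qm) - b(qm)) \Bigr|^2.
\]
After opening the square, the inner variance becomes a sum over pairs $(m_1, m_2)$ with $q m_1$ and $q m_2$ in specific residue classes modulo divisors of $q$, which is precisely the Barban--Davenport--Halberstam type shape handled by Theorem \ref{thm:2}. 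Applying that theorem to the difference $a - b$, using the common binary--quadratic--form (Gaussian integer) structure shared by both sequences, delivers an estimate of order $x/\log^A x$ upon choosing $B = B(A)$ sufficiently large.

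For $U^{(n)}$ with $n \leq 2$, the bilinear structure is too weak to feed productively into the variance bound, since $q$ has at most three prime factors. Here I would proceed directly: the Fundamental Lemma together with the level of distribution Proposition \ref{prop:leveldis} permits replacing $S(\C_q, p_{n+1})$ by its main term $\mu(I) g(q) V(p_{n+1})$, where $V(z)$ is the sifting product. Crucially the main terms agree between $\A$ and $\B$ (both share the density $g$), so the difference is controlled by the sifting errors. Combined with a Mertens--type estimate over the narrow prime range $p_1 \cdots p_n \in [Y/p_{n+1}, Y)$, the logarithmic shortness of this range produces the factor $\delta$, giving the target bound $O((\delta/\log x) \mu(I))$.

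The main obstacle will be casting the bilinear forms arising from $S_3$ and $U^{(n)}$ into the precise shape demanded by Theorem \ref{thm:2}, and verifying that the resulting coefficient vectors $\alpha_q, \beta_m$ together with the ambient sequences $a,b$ satisfy its technical hypotheses. This requires careful manipulation of the Gaussian integer / representation structure underlying both $\A$ and $\B$, as well as meticulous bookkeeping of the coprimality conditions introduced by the sifting.
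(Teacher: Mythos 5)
Your proposal has two substantive gaps, one in each half of the argument.

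For $n\le 2$ you propose to avoid the bilinear estimate entirely and instead apply the Fundamental Lemma after replacing $S(\C_q,p_{n+1})$ by $\mu(I)g(q)V(p_{n+1})$. This cannot work: in $U^{(1)}$ and $U^{(2)}$ the sifting limit $p_{n+1}$ can be as large as $\sqrt Y\asymp x^{1/6}$, so the sifting variable $s=\log D/\log p_{n+1}$ is bounded and the Fundamental Lemma gives no saving; worse, the congruence sum is exactly the part that the parity barrier forbids us from handling sieve-theoretically. In the paper the cases $n=1,2$ go through the \emph{same} bilinear machinery (Proposition \ref{prop:bilinear2}) as $S_3$ and $n\ge 3$ do. The $\de/\log x$ loss does not reflect a weaker method; it arises because $p_1\cdots p_{n+1}$ can straddle $x^{1/2}$, and the thin strip $x^{1/2-\de}\le p_1\cdots p_{n+1}< x^{1/2+\de}$ must be discarded by a Selberg upper-bound sieve before the sum can be written in the form required by Proposition \ref{prop:bilinear2}. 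Once that strip is removed, one splits into $U^{(n)}_1$ (with product $<x^{1/2-\de}$) and $U^{(n)}_2$ (with product $\ge x^{1/2+\de}$, requiring the roles of $\alpha$ and $\beta$ to be swapped), and both go into the bilinear estimate.

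The more serious gap is in the dispersion step. You apply Cauchy--Schwarz directly to the rational bilinear sum over $q$, claiming the resulting variance has a Barban--Davenport--Halberstam shape. It does not: $\sum_q|\sum_m\beta_m(a(qm)-b(qm))|^2$ is a sum over $q$ and pairs $(m_1,m_2)$ with no residue-class structure visible. The crucial move, which your sketch omits, is to first lift to Gaussian integers via Lemma \ref{lem:elefac}, writing $a(mn)$ as a count over $w,z\in\mathbb Z[i]$ with $N(w)=m$, $N(z)=n$, $\mathrm{Re}(\bar wz)=p^2$, and only then apply Cauchy--Schwarz in $w$ (the variable of \emph{small} norm $\le x/N$). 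The resulting off-diagonal sum over pairs $(z_1,z_2)$ with $N(z_i)\in[N,2N)$ collapses into a congruence $q_1z_2\equiv q_2 z_1\pmod{\Delta}$ with $\Delta=\mathrm{Im}(\bar z_1z_2)\le 2N$. It is this $\Delta$, not your $q$, that plays the role of the BDH modulus, and the variable is the prime $p\le x^{1/4}$ inside $q_i$. Since $\Delta$ can be as large as $2N\le 2x^{1/2-\delta}\approx(x^{1/4})^2$, this is precisely the extreme regime for which Theorem \ref{thm:2} was proved; your remark about $q\ll x^{2/3+1/24}$ being ``comfortably below'' the threshold is comparing the wrong quantities. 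Finally, even after Corollary \ref{cor:SWsequence} extracts the main terms, the remaining sum of main terms requires the long argument of Sections \ref{section:bilinearpf3b}--\ref{sec:smalllarged}, including Friedlander--Iwaniec's bilinear Jacobi-symbol estimate, their Theorem $2^\psi$ and Proposition 23.1, and Mitsui's prime number theorem for $\mathbb Q(i)$ --- none of which is secondary bookkeeping.
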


Propositions \ref{prop:leveldis} and \ref{prop:fundlem} are proven in
Sections \ref{section:leveldis} and \ref{section:fundlem}
respectively.  We reduce the proof of Proposition \ref{prop:bilinear1}
to a related statement about bilinear sums in Section
\ref{section:bilinear2}, which is in turn proven in a number of stages in
Sections
\ref{section:bilinearpf1} to \ref{sec:smalllarged}.

\section{Level of distribution} \label{section:leveldis}
Here, we prove Proposition \ref{prop:leveldis}.  Define the sequences
$\tilde \A$ and $\tilde \B$ by 
\begin{equation}
\tilde a(n) = \begin{cases}
\sum_{\substack{n = a^2+p^4\\p^2 \in I\\p\nmid a}} \frac{p \log
  p}{x^{1/4}\log x} &\textup{ if $\sqrt x<n\leq x$}\\ 
0 &\textup{ otherwise,}
\end{cases}
\end{equation}and
\begin{equation}
\tilde b(n) = \begin{cases}
\sum_{\substack{n = a^2 + p^2\\ p\in I\\p \nmid a}} \frac{\log p}{\log
  x} &\textup{ if $\sqrt x<n\leq x$}\\ 
0 &\textup{ otherwise.}
\end{cases}
\end{equation}
Friedlander and Iwaniec's main result in \cite{FI2} gives that for
\begin{equation}
M_d(\tilde \A) = g(d) \sum_{p^2\in I} \frac{p\log p}{x^{1/4}\log
  x}\frac{p-1}{p}\sqrt{x-p^4}, 
\end{equation}and
\begin{equation}
M_d(\tilde \B) = g(d) \sum_{p\in I} \frac{\log p}{\log x} \frac{p-1}{p}
\sqrt{x - p^2}, 
\end{equation}
and any $A>0$, there exists $B = B(A)$ such that
\begin{equation}
\sum_{d<D} R_d(\tilde \A) \ll_A \frac{x^{3/4}}{\log^A x}
\end{equation}and
\begin{equation}
\sum_{d<D} R_d(\tilde \B) \ll_A \frac{x}{\log^A x},
\end{equation}for $D = \frac{x^{3/4}}{\log^B x}$.  This immediately
implies the corresponding results for $\A$ and $\B$.  Specifically,
the following Lemma holds. 

\begin{lem}\label{lem:leveldis1}
For any $A>0$, there exists $B = B(A)$ such that
\begin{equation}
\sum_{d < D} R_d(\C) \ll \frac{x}{(\log x)^{A}}
\end{equation}for both $\C = \A$ and $\C = \B$, where $D =
\frac{x^{3/4}}{\log^B x}$. 
\end{lem}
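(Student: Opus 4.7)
The plan is to deduce Lemma \ref{lem:leveldis1} directly from the Friedlander--Iwaniec level-of-distribution bounds displayed above for the auxiliary sequences $\tilde\A$ and $\tilde\B$. The key observation is that, since $p$ is prime, the conditions $(a,p)=1$ and $p \nmid a$ coincide, so comparing the definitions gives
\begin{equation}
a(n) = 2x^{1/4}(\log x)\tilde a(n), \qquad b(n) = (\log x)\tilde b(n)
\end{equation}
for every $n$; the supports match because $p^2 \in I$ together with $X \ge x^{1/2}/\log^4 x$ forces $n \ge p^4 > \sqrt x$. Hence $\#\A_d = 2x^{1/4}(\log x)\#\tilde\A_d$ and $\#\B_d = (\log x)\#\tilde\B_d$. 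Multiplying the cited Friedlander--Iwaniec bounds by the relevant scalar (and increasing $B$ to absorb one extra $\log x$) yields
\begin{equation}
\sum_{d<D}\bigl|\#\C_d - \widetilde M_d(\C)\bigr| \ll \frac{x}{(\log x)^A}
\end{equation}
for both $\C = \A,\B$, where $\widetilde M_d(\A) \assign 2x^{1/4}(\log x)M_d(\tilde\A)$ and $\widetilde M_d(\B) \assign (\log x)M_d(\tilde\B)$.

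All that remains is to compare $\widetilde M_d(\C)$ with $M_d(\C) = g(d)\mu(I)$. Both main terms factor as $g(d)$ times a $d$-independent quantity, and $g(d) \le \tau(d)/d$ gives $\sum_{d<D}g(d) \ll (\log x)^2$; so it suffices to bound the discrepancy of the two $d$-independent factors by $O(x/(\log x)^{A+2})$. For $\C = \A$ that discrepancy is
\begin{equation}
\sum_{p^2 \in I}2(p-1)(\log p)\sqrt{x-p^4} - \int_I \sqrt{x-t^2}\,dt,
\end{equation}
and the substitution $t = p^2$ converts the integral into $\int_J 2p\sqrt{x-p^4}\,dp$ with $J = (\sqrt X, \sqrt{X(1+\eta)}]$. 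Replacing the factor $p-1$ by $p$ costs an error of size $\ll \sum_{p \in J}(\log p)\sqrt{x-p^4} \ll |J|\sqrt x \ll \sqrt{Xx}/\log x$, which is admissible. The remaining difference $\sum_{p\in J} 2p(\log p)\sqrt{x-p^4} - \int_J 2p\sqrt{x-p^4}\,dp$ is handled by partial summation against the smooth weight $f(t) = 2t\sqrt{x-t^4}$ (of sup-norm $\ll \sqrt{Xx}$) combined with the classical Prime Number Theorem in short intervals of length $|J| \asymp \sqrt X/\log x$, giving an error of $O(\sqrt{Xx}\cdot|J|/(\log x)^C) \ll \mu(I)/(\log x)^C$ for any fixed $C$. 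Since $\mu(I) \ll x/\log x$, choosing $C$ large in terms of $A$ completes the proof for $\C = \A$. The case $\C = \B$ is entirely analogous, with the substitution $t = p$ and the factor $(p-1)/p = 1 + O(1/p)$ handled trivially since $p \gg x^{1/2}/(\log x)^4$.

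The only substantive ingredient is therefore the Prime Number Theorem in short intervals with arbitrary power-of-log savings, valid because $|J| \gg \sqrt X/\log x$ is comfortably larger than $\sqrt X \exp(-c\sqrt{\log X})$ required by the classical zero-free region. All other steps are routine manipulations; the real content of the lemma has already been provided by Friedlander and Iwaniec \cite{FI2}, and the proof above is essentially a translation of their estimate into the weights and main terms natural for our sieve.
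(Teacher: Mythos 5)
Your proof is correct, and it takes essentially the same approach as the paper: the paper simply states that the Friedlander--Iwaniec bounds for $\tilde\A$, $\tilde\B$ ``immediately imply'' the result for $\A$, $\B$, and what you have done is to fill in the routine calculation (rescaling the weights, and comparing the resulting main terms $\sum_{p^2\in I}2(p-1)(\log p)\sqrt{x-p^4}$ and $\sum_{p\in I}(\log p)\frac{p-1}{p}\sqrt{x-p^2}$ against $\mu(I)$ via partial summation and the Prime Number Theorem) that the authors declare immediate. One small point worth flagging explicitly: in the partial-summation step the bound on $\int_J |f'(t)E(t)|\,dt$ should be phrased via the total variation of $f(t)=2t\sqrt{x-t^4}$ on $J$, which is $\ll\sup_J|f|\ll\sqrt{Xx}$ even though $f'$ itself can blow up near $t^4=x$; as written (``sup-norm $\ll\sqrt{Xx}$'') the intent is right but the justification is slightly elliptical. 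With that clarification the argument is complete.
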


We want to prove a version of the above lemma which includes a
$\tau(d)^k$ term.  In order to develop this, we first state an
elementary Lemma. 

\begin{lem}\label{lem:tau}
For any $n, k \geq 1$, there exists a divisor $d|n$ such that $d \le
n^{1/2^k}$ and such that 
\begin{equation}
\tau(n) \le 2^{2^k-1}\tau(d)^{2^k}.
\end{equation}
\end{lem}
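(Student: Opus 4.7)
The plan is to prove the lemma by induction on $k$, with all the essential content concentrated in the base case $k=1$: for every $n \ge 1$ there exists $d \mid n$ with $d \le n^{1/2}$ and $\tau(n) \le 2\tau(d)^2$. Granted this, the step from $k$ to $k+1$ is immediate---apply the inductive hypothesis to obtain $d \mid n$ with $d \le n^{1/2^k}$ and $\tau(n) \le 2^{2^k-1}\tau(d)^{2^k}$, then apply the base case to $d$ itself to produce $e \mid d$ with $e \le d^{1/2} \le n^{1/2^{k+1}}$ and $\tau(d) \le 2\tau(e)^2$, giving
\begin{equation}
\tau(n) \le 2^{2^k-1}\bigl(2\tau(e)^2\bigr)^{2^k} = 2^{2^{k+1}-1}\tau(e)^{2^{k+1}}.
\end{equation}

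For the base case, I would write $n = \prod_i p_i^{a_i}$ and first test the natural candidate $d^* = \prod_i p_i^{\lfloor a_i/2\rfloor}$; since $d^{*2} \mid n$ we have $d^* \le \sqrt n$. A direct check of the ratio $(a+1)/(\lfloor a/2\rfloor + 1)^2$ (it equals $2$ when $a=1$, and is at most $1$ whenever $a \ge 2$) yields
\begin{equation}
\tau(n) \le 2^s \tau(d^*)^2, \qquad \textup{where }s = \#\{i : a_i = 1\}.
\end{equation}
If $s \le 1$, then $d = d^*$ already satisfies the lemma.

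If instead $s \ge 2$, I would augment $d^*$ by the smaller half of the ``problematic'' primes: list those $p_i$ with $a_i = 1$ in increasing order $q_1 < q_2 < \cdots < q_s$ and set
\begin{equation}
d = d^* \cdot q_1 q_2 \cdots q_{\lfloor s/2 \rfloor}.
\end{equation}
The obvious inequality $q_1 \cdots q_{\lfloor s/2\rfloor} \le q_{\lfloor s/2\rfloor + 1} \cdots q_s$, combined with the identity $n/d^{*2} = \prod_{a_i \text{ odd}} p_i$ (in which $q_1 \cdots q_s$ occurs as a subproduct), gives $d^2 \le n$. Since $\tau(d) = 2^{\lfloor s/2\rfloor}\tau(d^*)$, one concludes
\begin{equation}
2\tau(d)^2 = 2^{2\lfloor s/2\rfloor + 1}\tau(d^*)^2 \ge 2^s \tau(d^*)^2 \ge \tau(n),
\end{equation}
completing the base case.

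The main obstacle is recognising that the naive choice $d^*$ loses by a factor $2^{s-1}$ precisely when many primes divide $n$ to exponent exactly one, and that the sharp constant $2$ is recovered only by attaching roughly half of these problematic primes to $d^*$; ordering the $q_j$ by size is essential for preserving the constraint $d \le \sqrt n$.
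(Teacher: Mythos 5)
Your proof is correct, and the inductive step is exactly as in the paper; the interesting divergence is in the base case $k=1$. You construct the divisor $d$ explicitly from the prime factorization of $n$: starting from $d^* = \prod p_i^{\lfloor a_i/2\rfloor}$, quantifying the loss as $2^s$ with $s$ the number of primes of exponent one, and then repairing it by multiplying in the smaller half of those primes (the ordering of the $q_j$ is indeed what keeps $d\le\sqrt n$). The paper instead uses a short extremal argument: choose $d\mid n$ with $d\le\sqrt n$ and $\tau(d)$ maximal, write $n=dd'$, observe that for any prime $p\mid d'$ the divisor $pd$ would exceed $\sqrt n$ (else it would beat $d$), so $d'/p\le\sqrt n$ and hence $\tau(d'/p)\le\tau(d)$, giving $\tau(n)\le\tau(d)\tau(p)\tau(d'/p)\le 2\tau(d)^2$ by submultiplicativity. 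Both are sound; the paper's proof is shorter and avoids the case analysis on $s$, while yours is constructive, exhibits a concrete extremizer, and makes visible exactly where the constant $2$ comes from (and that it is attained when $n$ is a product of two distinct primes).
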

\begin{proof}
Assume $n>1$.  We prove the result for $k=1$, the rest following by
induction on $k$.  Let $d|n$ with $d\leq \sqrt{n}$ be such that
$\tau(d)$ is maximal.  Write $n = d d'$, and note that $d' > 1$.  By
maximality, any prime divisor $p|d'$ satisfies $pd>\sqrt{n}$ so that
$d'/p \leq \sqrt{n}$.   Again, by maximality, $\tau(d'/p) \leq
\tau(d)$.  Thus we have that $\tau(n) \leq \tau(d)\tau(d'/p) \tau(p)
\leq 2 \tau(d)^2$. 
\end{proof}

Now we prove the following trivial bound.

\begin{lem}\label{lem:taudCd}
Recall that $D = \frac{x^{3/4}}{\log^B x}$.  For $\C = \A$ and $\C = \B$, we have
\begin{equation}
\sum_{d<D} \tau(d)^k \# \C_d \ll x (\log x)^{2^{2k+3}}
\end{equation}
\end{lem}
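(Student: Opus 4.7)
The plan is to interchange the order of summation and then reduce everything to a classical bound for a divisor sum on values of a quadratic polynomial. Writing
\begin{equation}
\sum_{d<D} \tau(d)^k \#\C_d = \sum_{n} c(n) \sum_{\substack{d\mid n\\ d<D}} \tau(d)^k,
\end{equation}
I would first drop the condition $d<D$ in the inner sum, which is harmless since we seek only an upper bound. The function $n\mapsto \sum_{d\mid n}\tau(d)^k$ is multiplicative, and on a prime power $p^e$ equals $\sum_{j=0}^e(j+1)^k\le (e+1)^{k+1}=\tau(p^e)^{k+1}$, so by multiplicativity $\sum_{d\mid n}\tau(d)^k\le\tau(n)^{k+1}$. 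This reduces the task to bounding $\sum_n c(n)\tau(n)^{k+1}$ for each of $\C=\A,\B$.

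For $\C=\A$, the weight $2p\log p$ is $\ll x^{1/4}\log x$ since $p^2\in I\subseteq(0,x^{1/2}]$, and the sum becomes
\begin{equation}
\ll x^{1/4}\log x \sum_{p^2\in I}\;\sum_{a\le\sqrt{x-p^4}} \tau(a^2+p^4)^{k+1}.
\end{equation}
The inner double sum is controlled by the classical estimate, uniform in the quadratic's constant coefficient, that for fixed $c\ge 1$,
\begin{equation}
\sum_{a\le T}\tau(a^2+c)^{k+1}\ll T(\log T)^{2^{k+1}-1};
\end{equation}
this is a consequence of $\tau(n)^{k+1}\le\tau_{2^{k+1}}(n)$ combined with the Nair--Tenenbaum bound for a multiplicative function on values of a fixed degree polynomial, and can also be obtained directly via Selberg's sieve. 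Applying this with $T=\sqrt{x-p^4}$ and then summing over the $\ll \eta\sqrt{X}/\log x$ primes with $p^2\in I$ yields a bound of order $x^{1/4}\log x\cdot \eta\sqrt{X}\sqrt{x}\cdot(\log x)^{2^{k+1}-2}\ll x(\log x)^{2^{k+1}-2}$, which is comfortably smaller than $x(\log x)^{2^{2k+3}}$. The case $\C=\B$ is handled identically, with $\log p\ll \log x$ in place of $2p\log p\ll x^{1/4}\log x$ and the polynomial $a^2+p^2$ in place of $a^2+p^4$; the count of primes in $I$ and the range of $a$ combine to give the same final estimate.

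The only genuine content is the divisor-sum bound over values of $a^{2}+c$ uniformly in $c$; once that is in hand the rest is bookkeeping. The generous exponent $2^{2k+3}$ in the statement is never approached, which is consistent with the author's description of this as a trivial bound meant to be tight enough only for downstream purposes.
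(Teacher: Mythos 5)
Your argument is a genuinely different route from the paper's, and it has one gap worth flagging.

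\textbf{Comparison of approaches.} You both begin identically: interchange the order of summation, drop the constraint $d<D$, and use the multiplicative bound $\sum_{d\mid n}\tau(d)^k\le\tau(n)^{k+1}$ to reduce to $\sum_n c(n)\tau(n)^{k+1}$. At that point you diverge. The paper applies Lemma \ref{lem:tau} again, in the other direction: every $n\le x$ has a divisor $d\le\sqrt n$ with $\tau(n)\le 2\tau(d)^2$, so $\sum_n\tau(n)^{k+1}a(n)\ll\sum_{d\le\sqrt x}\tau(d)^{2k+2}\#\A_d$, and then $\#\A_d$ for $d\le\sqrt x$ is bounded trivially via the congruence $a^2\equiv -p^4\modd{d}$, giving $\#\A_d\ll g(d)x\ll x\tau(d)/d$. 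This keeps the whole proof elementary and self-contained, using only tools already established in the paper. You instead write out $a(n)$ explicitly and appeal to a Nair--Tenenbaum/Shiu-type bound for $\sum_{a\le T}\tau(a^2+c)^{k+1}$ uniformly in $c$. That is a heavier external input, but in exchange it avoids the clever double use of Lemma \ref{lem:tau}.

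\textbf{The gap.} Your claimed uniform bound
\[\sum_{a\le T}\tau(a^2+c)^{k+1}\ll T(\log T)^{2^{k+1}-1}\]
is not in fact valid uniformly in $c$ without a size restriction on $c$ relative to $T$. Nair--Tenenbaum-type results require the polynomial's coefficients to be at most $T^{\alpha}$ for some fixed $\alpha$, with the implied constant then depending on $\alpha$. In your application $c=p^4$ can be as large as $\asymp x$ while $T=\sqrt{x-p^4}$ can be $O(1)$ when $p^4$ is close to $x$; in that regime the claimed bound is simply false, since a single term $\tau(a^2+p^4)^{k+1}$ can be $x^{o(1)}$, which dwarfs $T(\log T)^{2^{k+1}-1}$. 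This is a boundary effect affecting $O(1)$ primes and its total contribution is harmless, but your write-up does not separate it out. You should either restrict to $p^4\le x-x^{1/100}$ (say), where $\|a^2+p^4\|\le x\le T^{200}$ and the Nair--Tenenbaum bound genuinely applies, and handle the remaining $O(1)$ primes by the trivial estimate $\tau(n)\ll n^{\epsilon}$; or else note that, because $p^4$ is a perfect square, the relevant Kronecker symbol is always $\chi_4$ (conductor $4$), so there are in fact no Siegel-zero worries in the Euler product, and then carry out the boundary split explicitly. As a smaller point, the log-exponent you quote is probably one off (the Nair--Tenenbaum machinery produces $\rho_Q(m)\ll\tau(m)$ inside the truncated Euler sum, pushing $2^{k+1}$ to $2^{k+2}$), though this makes no difference to the final bound since $2^{k+2}\le 2^{2k+3}$.
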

\begin{proof}
We prove the result for $\C = \A$, the proof for $\C = \B$ being
essentially the same.  By Lemma \ref{lem:tau}, 
\begin{align*}
\sum_{d<D} \tau(d)^k \# \A_d 
&\le \sum_n \tau(n)^{k+1} a(n)\\
&\ll \sum_{d \le \sqrt{x}} \tau(d)^{2k+2} \# \A_d.
\end{align*}
For $d\le \sqrt{x}$,
\begin{align*}
\# \A_d &\ll \sum_{p^2\in I} p\log p \sum_{\substack{a<\sqrt{x}\\a^2
    \equiv -p^4 \modd{d}\\(a, p)=1}} 1\\ 
& \ll g(d) \sqrt{x}\sum_{p^2\in I} p\log p \\
& \ll g(d) x.
\end{align*}
Using the bound $g(d) \leq \frac{\tau(d)}{d}$, we have
\begin{align*}
\sum_{d \le \sqrt{x}} \tau(d)^{2k+2} \# \A_d
\ll x \sum_{d \le \sqrt{x}} \frac{\tau(d)^{2k+3}}{d}
\ll x (\log x)^{2^{2k+3}}.
\end{align*}
\end{proof}

Now, we are ready to prove our Proposition \ref{prop:leveldis}.
By Cauchy--Schwarz, we have
\begin{align*}
\sum_{d<D} \tau(d)^k R_d(\C) 
&\ll  \left(\sum_{d<D} \tau(d)^{2k} R_d(\C) \right)^{1/2}
\left(\sum_{d<D} R_d(\C) \right)^{1/2} \\ 
&\ll  \left(\sum_{d<D} \tau(d)^{2k} (\# \C_d + M_d(\C)) \right)^{1/2}
\left(\sum_{d<D} R_d(\C) \right)^{1/2} \\ 
&\ll  \left(x (\log x)^{2^{4k+3}} \right)^{1/2}  \bfrac{x}{(\log x)^{\tilde A}}^{1/2},
\end{align*}by Lemmas \ref{lem:leveldis1} and \ref{lem:taudCd}.  This
concludes the proof since we may make $\tilde A$ as large as we like
by choosing $B$ suitably large.

\section{Application of the Fundamental Lemma}
\label{section:fundlem} 
We now prove Proposition \ref{prop:fundlem}.  Recall that we are
interested in studying 
\begin{equation}
\sum_{q\in Q} S(\C_q, x^\delta),
\end{equation}for $\C = \A$ or $\C = \B$, $\delta = (\log
x)^{\varpi-1}$,  and $Q$ a set of square-free numbers not exceeding
$Y$.  The level of distribution provided by Proposition
\ref{prop:leveldis} is sufficient to derive the correct asymptotic for
this quantity, with a small error term.  To be precise, we apply an
upper and lower bound sieve of level of distribution $x^{\frac{1}{4}}$
so that the sifting variable $s = \frac{1}{4 \delta}$.  For $z\geq 1$,
we use the usual notation 
\begin{equation}
V(z) = \prod_{p < z} \left(1- g(p)\right).
\end{equation}

By the Fundamental Lemma (see e.g. Corollary 6.10 in \cite{FI}) and
Proposition \ref{prop:leveldis}, we have  
\begin{align*}
\sum_{q\in Q} S(\C_q, x^\delta) 
&= V(x^\delta) \sum_{q\in Q} g(q) \mu(I)  \left(1+O\left(\exp\left(-(4
      \delta)^{-1} \right)\right)\right) + O\left(\sum_{q \in Q}
  \sum_{d < x^{1/4}} R_{dq}(\C) \right)\\ 
&= V(x^\delta) \sum_{q\in Q} g(q)  \mu(I)
\left(1+O\left(\frac{1}{\log^A x}\right)\right) +
O\left(\sum_{d<x^{3/4 - 1/8}} \tau(d) R_d(\C)\right)\\ 
&=V(x^\delta) \sum_{q\in Q} g(q)  \mu(I)
\left(1+O\left(\frac{1}{\log^A x}\right)\right) + O(x\log^{-A}x) 
\end{align*}for any $A>0$.  
Note that the last line is independent of whether $\C = \A$ or $\C = \B$.  Thus
\begin{align*}
\sum_{q\in Q} S(\A_q, x^\delta)  - \sum_{q\in Q} S(\B_q, x^\delta) 
&\ll \frac{1}{\log^A x} \mu(I) \sum_{q\in Q} g(q) + x\log^{-A}x\\
&\ll  x\log^{-A+2}x,
\end{align*}upon noting that $g(q) \ll \frac{\tau(q)}{q}$.  

\section{Reduction of Proposition \ref{prop:bilinear1} 
to a bilinear  form bound} \label{section:bilinear2} 
We first rewrite $U^{(1)}$ and $U^{(2)}$ into a more convenient form.
\begin{lem}
For $\C = \A$ and $\C = \B$, and $U^{(j)}$ as defined in Lemma \ref{lem:sieve2},
\begin{align} \label{eqn:U1decomp}
U^{(1)}(\C) &
= \sum_{\substack{x^\delta \leq p_2 < p_1 < Y \\ Y \leq p_1p_2 <
    x^{1/2-\delta}}} S(\C_{p_1p_2}, p_2) + \sum_{\substack{x^\delta
    \leq p_2 < p_1 < Y \\ p_1p_2 \geq x^{1/2+\delta}}} S(\C_{p_1p_2},
p_2) + O\bfrac{\delta \mu(I)}{\log x} \notag \\ 
&=: U_1^{(1)}(\C) + U_2^{(1)}(\C) + O\left(\frac{\delta }{\log x}\mu(I)\right),
\end{align}and
\begin{align}\label{eqn:U2decomp}
U^{(2)}(\C) &= \sum_{\substack{x^\delta \leq p_3<...< p_1 < Y \\
    p_1p_2 < Y \leq p_1p_2p_3 < x^{1/2-\delta}}} S(\C_{p_1p_2p_3},
p_3) +\sum_{\substack{x^\delta \leq p_3<...< p_1 < Y \\ p_1p_2 < Y
    \leq p_1p_2p_3 \\ p_1p_2p_3 \geq x^{1/2+\delta}}}
S(\C_{p_1p_2p_3}, p_3)+ O\bfrac{\delta  \mu(I)}{\log x} \notag \\ 
&=: U_1^{(2)}(\C) + U_2^{(2)}(\C)+ O\bfrac{\delta \mu(I)}{\log x}.
\end{align}
\end{lem}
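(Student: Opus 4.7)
The plan is to derive both decompositions by partitioning the range of the prime product $p_1p_2$ (resp.\ $p_1p_2p_3$) in the definition of $U^{(1)}(\C)$ (resp.\ $U^{(2)}(\C)$) into three pieces, according to whether the product lies below $x^{1/2-\delta}$, inside the \emph{critical strip} $[x^{1/2-\delta},x^{1/2+\delta})$, or above $x^{1/2+\delta}$. By construction the outer two pieces are exactly $U_1^{(j)}(\C)$ and $U_2^{(j)}(\C)$, so the entire lemma reduces to bounding the critical-strip contribution by $O(\delta\mu(I)/\log x)$.

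This I would handle in direct analogy with the treatment of $\sum_{x^{1/2-\delta}\le p\le x^{1/2}}S(\C_p,p)$ in the proof of Lemma \ref{lem:sieve1}. For each tuple $(p_1,p_2)$ in the critical strip (and similarly for triples in the $U^{(2)}$ case), I would use $S(\C_{p_1p_2},p_2)\le S(\C_{p_1p_2},x^{1/10})$ and apply Selberg's upper bound sieve to obtain
\begin{equation}
S(\C_{p_1p_2},p_2)\ll \frac{g(p_1p_2)\mu(I)}{\log x}+\sum_{d<x^{1/5}}\tau_3(d)R_{dp_1p_2}(\C).
\end{equation}
Since $dp_1p_2\le x^{7/10+\delta}$ stays well below the level of distribution $D=x^{3/4}/\log^Bx$, Proposition \ref{prop:leveldis} shows that summing these remainders over the critical strip contributes $\ll x/\log^A x$, which for $A$ large is negligible compared to $\delta\mu(I)/\log x$.

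For the main term, the bound $g(n)\ll 1/n$ for squarefree $n$ reduces the estimate to showing
\begin{equation}
\sum_{\substack{x^{\delta}\le p_2<p_1<Y\\ x^{1/2-\delta}\le p_1p_2<x^{1/2+\delta}}}\frac{1}{p_1p_2}\ll \delta
\end{equation}
and the analogous triple-prime estimate. For fixed $p_1$, the inner sum over $p_2$ runs over primes in an interval of logarithmic width $2\delta\log x$ at scale $\log(x^{1/2}/p_1)$; by Mertens' theorem this is $O(\delta\log x/(\tfrac{1}{2}\log x-\log p_1))$. Summing against $1/p_1$ and using the partial-fraction expansion of $1/(t(\tfrac{1}{2}\log x-t))$ produces a bounded integral, yielding a constant multiple of $\delta$; the tight corner near $p_1\sim x^{1/4}$ only contributes $O(\delta^2)$. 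The case of $U^{(2)}$ is entirely analogous, with the triple-prime sum controlled by the same Mertens argument applied to the innermost variable. The main (though essentially bookkeeping) obstacle is confirming that the $\delta$-saving from the narrow critical strip survives uniformly across all the prime ranges, especially near the edges of the summation.
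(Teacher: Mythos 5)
Your decomposition and argument match the paper's proof essentially exactly: split the range of $p_1p_2$ (resp.\ $p_1p_2p_3$) at $x^{1/2-\delta}$ and $x^{1/2+\delta}$, majorize the critical-strip contribution by $S(\C_{p_1p_2},x^{1/10})$, apply Selberg's upper bound sieve plus Proposition~\ref{prop:leveldis}, and control the resulting $\sum 1/(p_1p_2)$ by Mertens. One small point worth making explicit, which the paper does and you implicitly assume, is that the bound $S(\C_{p_1p_2},p_2)\le S(\C_{p_1p_2},x^{1/10})$ is only valid because $p_2\ge x^{1/2-\delta}/p_1 > x^{1/2-\delta}/Y > x^{1/10}$ (using $Y=x^{1/3+1/48}$); and once $p_2>x^{1/10}$ is noted, the Mertens estimate is immediate with no need to worry about a corner near $p_1\sim x^{1/4}$, since $\log(x^{1/2}/p_1)\gg\log x$ uniformly for $p_1<Y$.
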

\begin{proof}
To prove \eqref{eqn:U1decomp}, it suffices to show that
\begin{equation}\label{eqn:U1decomppf1}
\sum_{\substack{x^\delta \leq p_2 < p_1 < Y \\ x^{1/2 - \delta}
    \leq p_1p_2 < x^{1/2+\delta}}} S(\C_{p_1p_2}, p_2) \ll \frac{\delta
  \mu(I)}{\log x}. 
\end{equation}In the sum above, $p_2 \geq x^{1/2 - \delta}/p_1 > x^{1/2 -
  \delta}/Y > x^{1/10}$, so that by Selberg's upper bound sieve, and
Proposition \ref{prop:leveldis}, the left hand side of
\eqref{eqn:U1decomppf1} is bounded by 
\begin{align}
\sum_{\substack{x^\delta \leq p_2 < p_1 < Y \\ x^{1/2 - \delta}
    \leq p_1p_2 < x^{1/2+\delta}}} S(\C_{p_1p_2}, x^{1/10})  
&\ll  \frac{\mu(I)}{\log x} \sum_{\substack{ x^{1/10} < p_2 < p_1 < Y
    \\ x^{1/2 - \delta} \leq p_1p_2 < x^{1/2+\delta}}} \frac{1}{p_1p_2}
\notag \\ 
&\ll \frac{\delta}{\log x}\mu(I).
\end{align}

Similarly, to prove \eqref{eqn:U2decomp}, it suffices to show that
\begin{align} \label{eqn:U2decomppf1}
\sum_{\substack{x^\delta \leq p_3 < p_2 < p_1 < Y \\ p_1p_2 < Y \\
    x^{1/2 - \delta} \leq p_1p_2p_3 < x^{1/2+\delta}}} S(\C_{p_1p_2p_3},
p_3) \ll \frac{\delta }{\log x}\mu(I). 
\end{align}In the sum above, $p_3 > x^{1/2 - \delta}/Y > x^{1/10}$, so
by Selberg's upper bound sieve, and Proposition \ref{prop:leveldis},
the quantity on the left hand side of \eqref{eqn:U2decomppf1} is
bounded by 
\begin{align}
\sum_{\substack{x^\delta \leq p_3 < p_2 < p_1 < Y \\ p_1p_2 < Y \\
    x^{1/2 - \delta} \leq p_1p_2p_3 < x^{1/2+\delta}}} S(\C_{p_1p_2p_3},
x^{1/10})  
&\ll \frac{\mu(I)}{\log x} \sum_{\substack{x^{1/10}< p_3 < p_2 < p_1 <
    Y \\ x^{1/2 - \delta} \leq p_1p_2p_3 < x^{1/2+\delta}}}
\frac{1}{p_1p_2p_3} \notag \\ 
&\ll \frac{\delta}{\log x}\mu(I).
\end{align}
\end{proof}

In
order to simplify the conditions on the primes in our sieving functions the
following lemma will be useful.
\begin{lem}\label{lem:close}
Let $x^{-\de}\le\kappa\le 1$.  Then for any $P_1,P_2\in [x^{\de},x^{1/3}]$
we have
\[\sum_{P_1\le p_1\le (1+\kappa)P_1}\sum_{P_2\le p_2\le (1+\kappa)P_2}
\sum_{n\equiv 0\modd{p_1p_2}}c(n)\tau(n)\ll \kappa^2 x(\log x)^{2^{17}}
+ \frac{x}{(\log x)^A}\]
for any $A>0$.
\end{lem}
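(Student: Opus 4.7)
The plan is to expand $\tau(n)$ using Lemma~\ref{lem:tau}, then apply the level-of-distribution result in Proposition~\ref{prop:leveldis} to split the sum into a main term (coming from $M_d=g(d)\mu(I)$) and a remainder; the main term produces the $\kappa^2 x(\log x)^{2^{17}}$ bound after summing over the short prime intervals, while the level-of-distribution handles the remainder.

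First, Lemma~\ref{lem:tau} with $k=4$ gives $\tau(n)\ll\sum_{d\mid n,\,d\le x^{1/16}}\tau(d)^{16}$. Substituting this in and interchanging the order of summation,
\[
\sum_{n\equiv 0\modd{p_1p_2}} c(n)\tau(n) \ll \sum_{d\le x^{1/16}}\tau(d)^{16}\,\#\C_{[d,p_1p_2]},
\]
where $[\,\cdot\,,\,\cdot\,]$ denotes the least common multiple. Since $p_1p_2\le 4x^{2/3}$, one has $[d,p_1p_2]\le d\,p_1p_2\le 4x^{35/48}$, which lies below $D=x^{3/4}/(\log x)^B$ once $B$ is large enough, so Proposition~\ref{prop:leveldis} applies.

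Writing $\#\C_\ell = g(\ell)\mu(I) + \bigl(\#\C_\ell - g(\ell)\mu(I)\bigr)$ and summing over $p_1,p_2$, the main-term contribution is
\[
M := \mu(I)\sum_{p_1,p_2}\sum_{d}\tau(d)^{16} g([d,p_1p_2]).
\]
Decomposing $d=p_1^{a_1}p_2^{a_2}d_0$ with $(d_0,p_1p_2)=1$, the inner sum factors as a product of two local sums at $p_1,p_2$, each of which is $\ll g(p_j)$, times
\[
\sum_{(d_0,p_1p_2)=1}\tau(d_0)^{16}g(d_0) \ll \sum_{d_0\le x}\frac{\tau(d_0)^{17}}{d_0}\ll (\log x)^{2^{17}},
\]
where we use $g(d)\le\tau(d)/d$ and the standard moment bound $\sum_{d\le x}\tau(d)^k/d\ll(\log x)^{2^k}$. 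Since $1/P_j\le x^{-\delta}\le\kappa$, the trivial bound for primes in short intervals gives $\sum_{p_j\in[P_j,(1+\kappa)P_j]} g(p_j)\ll\kappa$. Hence $M\ll\kappa^2\mu(I)(\log x)^{2^{17}}\ll\kappa^2 x(\log x)^{2^{17}}$.

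For the remainder, given $\ell$ with $p_1p_2\mid\ell$ there are at most $4$ values of $d\le x^{1/16}$ with $[d,p_1p_2]=\ell$, each dividing $\ell$ (so $\tau(d)\le\tau(\ell)$), and at most $\omega(\ell)^2\le\tau(\ell)^2$ ordered pairs $(p_1,p_2)\in I_1\times I_2$ with $p_1p_2\mid\ell$. Interchanging the sums therefore yields
\[
\sum_{p_1,p_2}\sum_d\tau(d)^{16}R_{[d,p_1p_2]}(\C)\ll \sum_{\ell\le D}\tau(\ell)^{18}R_\ell(\C)\ll\frac{x}{(\log x)^A}
\]
by Proposition~\ref{prop:leveldis} with $k=18$ and $B=B(A,18)$. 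Combining the two estimates gives the Lemma.

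The main subtlety lies in the main-term analysis: one must carefully handle the factorization $d=p_1^{a_1}p_2^{a_2}d_0$ to extract the factor $g(p_1)g(p_2)$, which is what produces the $\kappa^2$ saving after summing over the short intervals of primes.
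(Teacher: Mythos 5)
Your proposal is correct and follows essentially the same route as the paper: bound $\tau(n)$ via Lemma~\ref{lem:tau} with $k=4$, split $\#\C_\ell$ into the multiplicative main term $g(\ell)\mu(I)$ and the remainder $R_\ell(\C)$, use $g(p)\ll 1/p$ with $1/P_j\le\kappa$ to get the $\kappa^2$ saving, and fold the remainder into Proposition~\ref{prop:leveldis} after re-indexing by $\ell$. The one genuine (minor) difference is bookkeeping: the paper writes $\#\C_{dp_1p_2}$ directly, which tacitly requires replacing the divisor $d$ furnished by Lemma~\ref{lem:tau} by $d/\gcd(d,p_1p_2)$ (losing only a bounded factor in $\tau(d)^{16}$), whereas you work with $\#\C_{[d,p_1p_2]}$ throughout and then carry out a slightly more elaborate factorization $d=p_1^{a_1}p_2^{a_2}d_0$ to extract $g(p_1)g(p_2)$. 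Your version is a bit more explicit about a point the paper elides, at the cost of a somewhat longer main-term computation; both are sound.
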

\begin{proof}
By Lemma \ref{lem:tau}, $n$ has at least one divisor $d\le n^{1/16}$
such that $\tau(n)\ll\tau(d)^{16}$.  Thus according to 
Proposition \ref{prop:leveldis} we obtain
\begin{eqnarray*}
\lefteqn{\sum_{P_1\le p_1\le (1+\kappa)P_1}\sum_{P_2\le p_2\le (1+\kappa)P_2}
\sum_{n\equiv 0\modd{p_1p_2}}c(n)\tau(n)}\\
&\ll& \sum_{P_1\le p_1\le (1+\kappa)P_1}\sum_{P_2\le p_2\le (1+\kappa)P_2}
\sum_{d\le x^{1/16}}\tau(d)^{16}\sum_{n\equiv 0\modd{dp_1p_2}}c(n)\\
& \ll &
\sum_{P_1\le p_1\le (1+\kappa)P_1}\sum_{P_2\le p_2\le (1+\kappa)P_2}
\sum_{d\le x^{1/16}}\tau(d)^{16}(M_{dp_1 p_2}(\C)+R_{dp_1 p_2}(\C))\\
&\ll & \sum_{P_1\le p_1\le (1+\kappa)P_1}\sum_{P_2\le p_2\le (1+\kappa)P_2}
\sum_{d\le x^{1/16}}\tau(d)^{17}\frac{x}{dp_1p_2} +\frac{x}{(\log x)^A}.
\end{eqnarray*}
To complete the proof we merely observe that if $P=P_1$ or $P_2$ then
\[\sum_{P\le p\le (1+\kappa)P}\frac{1}{p}\ll \kappa.\]
\end{proof}

For $k\geq 3$, in the sum in $U^{(k)}(\C)$, we have
\begin{equation}
Y \leq p_1...p_{k+1} < (p_1...p_k)^{\frac{k+1}{k}} \leq Y^{4/3} < x^{1/2 - \delta}.
\end{equation}
Thus if we define
\[U^{(k)}_*(\C)=\sum_{\substack{x^\delta\leq p_{k+1}<\ldots<p_1<Y \\
p_1\ldots p_k<Y\leq p_1\ldots p_{k+1}< x^{1/2-\delta}}} 
S(\C_{p_1\ldots p_{k+1}},p_{k+1}),\]
then we will have 
\[S_3(\C)=U^{(0)}_*(\C),\;\;U_1^{(1)}(\C)=U^{(1)}_*(\C),\;\;
U^{(2)}_1(\C)=U^{(2)}_*(\C),\]
and
\[U^{(k)}(\C)=U^{(k)}_*(\C)\;\;\mbox{for}\;\; k\ge 3.\]

If $p\in J=[V,(1+\kappa)V)$ and an integer $n$ is counted by
$S(\C_{pq},V)$ but not by $S(\C_{pq},p)$, then $n$ has at
least two prime factors in $J$.  In our application we have 
$V\le x^{1/2-\delta}$ and $n\geq x(\log x)^{-8}$.  Thus $n$ will have 
at least one further prime factor.  Thus $V^3\le n\le x$ in this 
situation. A given integer $n$ may be counted in many ways by 
$U^{(k)}_*(\C)$. However the number of ways is at most the number of
choices for $p_{k+1}<\ldots<p_1$ all dividing $n$. This will be
\[\left(\begin{array}{c} \omega(n) \\ k+1\end{array}\right)
\leq 2^{\omega(n)}\leq \tau(n).\]
The total contribution from such
integers $n$ is therefore bounded as in Lemma \ref{lem:close}. 
Now, let  
\begin{equation}
J(r) = [V_r,V_{r+1})=[x^{\delta}(1+\kappa)^r,
x^{\delta}(1+\kappa)^{r+1}),\;\;(r\ge 0)
\end{equation}
and let $R \ll \kappa^{-1}\log x$ be such that
$x^{\delta}(1+\kappa)^R>x$. We then see that
\begin{align}\label{eqn:U*}
U^{(k)}_*(\C)&=\sum_{0\le r\le R}\sum_{p\in J(r)}\sum_{\substack{p<p_k<\ldots<p_1<Y \\
p_1\ldots p_k<Y\leq p_1\ldots p_kp< x^{1/2-\delta}}} 
S(\C_{p_1\ldots p_k p},V_r)\\
&\hspace{2cm}+O(\kappa x(\log x)^{1+2^{17}})
+O\left(\kappa^{-1}\frac{x}{\log^{A-1} x}\right).
\end{align}
This procedure enables us to reduce considerations to a bilinear sum.
Indeed we have
\begin{align}\label{eqn:bilin}
\sum_{p\in J(r)}\sum_{\substack{p<p_k<\ldots<p_1<Y \\
p_1\ldots p_k<Y\leq p_1\ldots p_kp< x^{1/2-\delta}}} 
S(\C_{p_1\ldots p_k p},V_r)=
\sum_{m,n}\alpha^{(r)}_m \beta^{(r)}_n c(mn)
\end{align}
where $\alpha^{(r)}_m$ is the characteristic function for the integers
$m$ all of whose prime factors are at least $V_r$, and $\beta^{(r)}_n$ 
is the characteristic function for integers $n=p_1\ldots p_k p$ with
\[p\in J(r),\;\;\; p<p_k<\ldots<p_1<Y\;\;\;\mbox{and}\;\;\;
p_1\ldots p_k<Y\leq p_1\ldots p_kp< x^{1/2-\delta}.\]
Note that $\beta^{(r)}_n$ is
supported on integers $n\in[Y,x^{1/2-\de})$.

The procedure for $U_2^{(1)}(\C)$ and $U_2^{(2)}(\C)$ will be somewhat
different.  As before we may use Lemma \ref{lem:close} to
replace $S(\C_{p_1p_2},p_2)$ in $U_2^{(1)}(\C)$ by
$S(\C_{p_1p_2},V_r)$, when $p_2\in J(r)$.  For example, this yields
\[U_2^{(1)}(\C)=\sum_{0\le r\le R}\sum_{p_2\in J(r)}
\sum_{\substack{p_1\geq x^{1/2+\de}/p_2\\ p_2 < p_1 < Y}} S(\C_{p_1p_2},V_r)
+O(\kappa x(\log x)^{1+2^{17}})+O\left(\kappa^{-1}\frac{x}{\log^{A-1} x}\right).\]
The sum on the right can be expressed as
\[\sum_{0\le r\le R}\sum_{m,n}\alpha^{(r)}_m \beta^{(r)}_n c(mn),\]
where we now take $\alpha^{(r)}_m$ as the characteristic function for
numbers $m=p_1p_2$ with $p_2\in J(r)$, $p_2 < p_1 < Y$, and
$p_1p_2\geq x^{1/2+\de}$, and $\beta^{(r)}_n$ as the characteristic
function for numbers $n$ all of whose prime factors are at least $V_r$.
Since $c(n)$ is supported in 
\[(X^2,x]\subseteq(x(\log x)^{-8},x]\]
we may assume that $\beta^{(r)}_n$ is supported in 
\[\big(x(\log x)^{-8}Y^{-2},x^{1/2-\de}\big]\subseteq
\big(x^{1/4+1/48},x^{1/2-\de}\big]\]
say.  This is satisfactory for our purposes.
  
We may handle $U^{(2)}_2(\C)$ in a precisely analogous fashion. On
choosing $\kappa=(\log x)^{-A/2}$ we find that each of $S_3(\C)$,
$U^{(1)}_1(\C)$, $U^{(1)}_2(\C)$, $U^{(2)}_1(\C)$, $U^{(2)}_2(\C)$ and 
$U^{(k)}(\C)$ (for $k\ge 3$), can
be expressed as a sum of $O(R)$ bilinear sums as in \eqref{eqn:bilin},
together with an error term $O(x(\log x)^{1+2^{17}-A/2})$.  Thus it will
suffice to prove the following result.

\begin{prop}\label{prop:bilinear2}
Fix any $\xi > 0$ and suppose $x^{1/4 + \xi} \le N \le
x^{1/2-\delta}$.  Then, for coefficients $\alpha_m$ and $\beta_n$
as above, we have 
\begin{equation}
\sum_{N < n \leq 2N} \sum_{\substack{m < x/N }} \alpha_m\beta_n 
\big(a(mn) -b(mn)\big) \ll_{A,\xi} \frac{x}{\log^A x}, 
\end{equation}for any $A>0$.
\end{prop}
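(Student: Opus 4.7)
My plan is to prove Proposition~\ref{prop:bilinear2} by a dispersion argument that appeals to Corollary~\ref{cor:SWsequence} (or its generalization Theorem~\ref{thm:2}) for the key mean-square bound. Setting $M = x/N$, we have $x^{1/2+\de}\le M\le x^{3/4-\xi}$, which is well beyond the reach of Proposition~\ref{prop:leveldis}, but comfortably within the Barban--Davenport--Halberstam range $Q\le x^2/(\log x)^B$ allowed by Corollary~\ref{cor:SWsequence}. This is precisely the motivation for developing that Corollary, so it is natural to expect it to be the driving input here.

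First I would open the definitions of $a(mn)$ and $b(mn)$. For each fixed $m$, the condition $m\mid a^2+p^{2j}$ (with $j=2$ for $\A$ and $j=1$ for $\B$) places $a$ in one of $\rho(m)$ residue classes modulo $m$, and $n=(a^2+p^{2j})/m$ is then determined, so the bilinear sum reduces to a weighted count of solutions in arithmetic progressions of modulus $m$. Next, I would apply Cauchy--Schwarz and expand the square. After reorganizing the expansion, the resulting mean square fits the structure $\sum_q \sum_a^{*} |S(x;a,q)-S(x;q)|^2$ appearing in Corollary~\ref{cor:SWsequence}, where the modulus $q$ emerges from the gcd combination of the two copies of $m$ (or of $n_1,n_2$) arising from squaring, and the coefficient sequences $c_1,c_2$ are built from $\alpha$, $\beta$, and the primes $p$. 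The Siegel--Walfisz hypothesis for $c_1$ is inherited from the almost-prime structure of $\alpha$ (supported on squarefree integers whose prime factors exceed $V_r\ge x^\de$) together with the classical Siegel--Walfisz theorem applied to $p$. Corollary~\ref{cor:SWsequence} then bounds the mean square by $x^2/(\log x)^A$ after absorbing the $\|c_i\tau\|^2$ factors, and crucially the main terms $S(x;q)$ for the $\A$ and $\B$ sums agree exactly, because $\A$ and $\B$ share the same local density $g(d)=\rho(d)/d$; this matching is precisely the reason for the particular choice of weights in \eqref{eqn:Adef}.

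The main obstacle is orchestrating the Cauchy--Schwarz so that (i) the $\alpha_m$ weight lies on the side where the Siegel--Walfisz hypothesis is easy to verify, (ii) the effective modulus $q$ after expansion still satisfies $q\le x^2/(\log x)^B$ (which is fine since $m,n\le x^{3/4-\xi}$ individually, leaving room in the Corollary), and (iii) the support conditions on $\beta_n$ mesh with the coefficient $c_2$ in Corollary~\ref{cor:SWsequence}. Threading these technical conditions through the dispersion expansion, and verifying the Siegel--Walfisz property of the resulting composite sequence, is the heart of the argument; by contrast, once these mesh correctly, the final bound $T\ll x/(\log x)^A$ follows by combining the mean-square bound with Cauchy--Schwarz and an elementary estimate for $\|\alpha\|_2$.
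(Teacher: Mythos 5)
Your plan captures one genuine ingredient of the paper's proof, namely that a dispersion/Cauchy--Schwarz argument leading to Corollary~\ref{cor:SWsequence} (or Theorem~\ref{thm:2}) is central; this is indeed what Proposition~\ref{prop:EMN} does. But the proposal has two significant gaps, and the second is fatal.

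First, the setup cannot work in $\mathbb{Z}$ alone. Your proposed decomposition, fixing $m$ and splitting $a$ into residue classes modulo $m$, does not produce a usable BDH structure after Cauchy--Schwarz: squaring $\sum_m|\cdots|^2$ gives pairs $(m_1,m_2)$ with no single well-behaved modulus. What makes the argument work in the paper is the passage to Gaussian integers. Writing $mn = N(\bar w z)$ with $N(w)=m$, $N(z)=n$, the condition $a^2+p^{2j}=mn$ becomes $\tRe(\bar w z)=p^j$. After Cauchy--Schwarz in $w$ one has $q_i=\tRe(\bar w z_i)$ for $i=1,2$, and then $w$ is essentially determined by $q_1,q_2,z_1,z_2$ via
\[
w=-i\,\Delta^{-1}(q_1z_2-q_2z_1),\qquad \Delta=\tIm(\bar z_1 z_2),
\]
so that integrality of $w$ forces $q_1z_2\equiv q_2 z_1\pmod{\Delta}$. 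The modulus $\Delta$ depends on the two $z$-variables (the $n$-side), not on $m$, and this is precisely what allows the $Y(a,D)$--$Z(a,D)$ separation and hence the application of Corollary~\ref{cor:SWsequence}. Without the Gaussian structure there is no analogue of $\Delta$ and the dispersion does not close.

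Second, and more seriously, the claim that ``the main terms $S(x;q)$ for the $\A$ and $\B$ sums agree exactly because $\A$ and $\B$ share the same local density $g(d)=\rho(d)/d$'' is incorrect. The shared local density controls Proposition~\ref{prop:leveldis}, but after the dispersion the two sequences behave asymmetrically: for $\A$ the congruence $p_1^2\equiv a\,p_2^2\pmod{\Delta}$ forces $a$ to lie in the squares mod $\Delta$, whereas for $\B$ the congruence $p_1\equiv a\,p_2\pmod{\Delta}$ places no such constraint. Consequently the main terms produced by Corollary~\ref{cor:SWsequence} do \emph{not} cancel; what remains, after Proposition~\ref{prop:EMN}, is
\[
\sum_{D}\frac{1}{\phi(D)}\Bigl(\sumstar_{b\modd{D}}Z(b^2,D)-\sumstar_{a\modd{D}}Z(a,D)\Bigr),
\]
and showing this is small is the hard core of the proof. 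It is handled by writing the difference as a sum over real characters $\chi$ with $\chi^2=\chi_0$, extracting Jacobi symbols, applying quadratic reciprocity, and then invoking the deep bilinear and linear estimates from Friedlander--Iwaniec's work (their Proposition~14.1 for the middle range of conductors, their Proposition~23.1 and Theorem~$2^{\psi}$ for large conductors) together with Mitsui's Gaussian prime number theorem for small conductors. This occupies Sections~\ref{section:bilinearpf3a}--\ref{sec:smalllarged} of the paper and is where the real work lies; your proposal treats this step as a triviality, when in fact it is the point where, in the original problem $a^2+b^4$, Friedlander and Iwaniec also had to expend the bulk of their effort.
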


We note for future reference that $\alpha_m$ and $\beta_n$ are
supported on integers all of whose prime factors are at least
$x^{\delta}$.  In particular they vanish unless $m$ and $n$ are odd.
We also note that $|\alpha_m|,|\beta_n|\le 1$ for all $m,n$.

\section{The bilinear form over Gaussian
  integers} \label{section:bilinearpf1} 
Our purpose is to prove Proposition \ref{prop:bilinear2}.  The expression
$a^2 + p^4$ is a special value of the norm form of the Gaussian
integers, and we now take advantage of that structure.   

For $w, z \in \mathbb{Z}[i]$, let $N(w)$ denote the usual Gaussian norm and 
\begin{equation}
S_1(z, w) = \sum_{\substack{p^2 \in I\\ \tRe \bar w z = p^2}} 2p \log p,
\end{equation}and
\begin{equation}
S_2(z, w) = \sum_{\substack{p \in I\\ \tRe \bar w z = p}} \log p.
\end{equation}Note that both sums are either empty or contain only one
term.  We would now like to convert the sum over $m$ and $n$ present
in Proposition \ref{prop:bilinear2} to a sum over Gaussian integers.
We shall call $\gamma \in \mathbb{Z}[i]$ primitive if $\gamma$ is not
divisible by any rational prime. 

\begin{lem}\label{lem:elefac}
Let $\gamma \in \mathbb Z[i]$ be primitive and coprime to $2$, and let
$m$ be a positive
integer such that $m|N(\gamma)$.  Then there exist exactly four
associate choices for $\lambda \in \mathbb Z [i]$ such that
$\lambda|\gamma$ and $N(\lambda) = m$. Of these exactly one has
$\tReb(\lambda)$ positive and odd.
\end{lem}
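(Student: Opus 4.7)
\textbf{Proof plan for Lemma \ref{lem:elefac}.}
The plan is to use unique factorization in $\mathbb{Z}[i]$ and then a parity argument on the associates.

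First I would analyse the factorization of $\gamma$. Since $\gamma$ is coprime to $2$ and primitive (not divisible by any rational prime), $\gamma$ contains no factor of $1+i$, no inert prime $q\equiv 3\pmod 4$, and for each split prime $p=\pi\bar\pi$ (with $p\equiv 1\pmod 4$) at most one of $\pi,\bar\pi$ divides $\gamma$. Hence $\gamma = u\prod_i \pi_i^{a_i}$ with $u$ a unit, the $\pi_i$ distinct Gaussian primes, and the rational primes $p_i = N(\pi_i)$ distinct and $\equiv 1\pmod 4$. In particular $N(\gamma)=\prod_i p_i^{a_i}$.

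Next I would count divisors of norm $m$. Since $m\mid N(\gamma)$, we must have $m=\prod_i p_i^{b_i}$ with $0\le b_i\le a_i$. Any divisor $\lambda$ of $\gamma$ in $\mathbb{Z}[i]$ has the form $v\prod_i \pi_i^{c_i}$ with $v$ a unit and $0\le c_i\le a_i$, and its norm is $\prod_i p_i^{c_i}$. Comparing with $m$ forces $c_i=b_i$ for every $i$, so $\lambda = v\prod_i \pi_i^{b_i}$. Since $\mathbb{Z}[i]^\times=\{1,i,-1,-i\}$ has exactly four elements, this gives exactly four associate choices for $\lambda$, as claimed.

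Finally I would pin down the distinguished associate via parity. Write $\lambda=a+bi$. Because $\lambda\mid\gamma$ and $\gamma$ is coprime to $2=-i(1+i)^2$, $\lambda$ is coprime to $1+i$; equivalently, $a+b$ is odd, so exactly one of $a,b$ is odd. The four associates are
\begin{equation}
\lambda=a+bi,\quad i\lambda=-b+ai,\quad -\lambda=-a-bi,\quad -i\lambda=b-ai,
\end{equation}
with real parts $a,-b,-a,b$. Exactly two of these four real parts are odd, namely the pair $\{a,-a\}$ or the pair $\{b,-b\}$ depending on which of $a,b$ is the odd coordinate; and exactly one member of that pair is positive (it is nonzero since it is odd). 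This produces the unique associate with $\mathrm{Re}(\lambda)$ positive and odd, completing the proof.

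I do not anticipate a genuine obstacle: the whole argument is a direct consequence of unique factorization in $\mathbb{Z}[i]$ together with the elementary observation that divisibility by $1+i$ is governed by the parity of $\mathrm{Re}+\mathrm{Im}$. The only point that must be verified with a little care is that the hypothesis ``$\gamma$ primitive and coprime to $2$'' really forces each $\lambda\mid\gamma$ to be coprime to $1+i$, which is what makes the parity dichotomy clean.
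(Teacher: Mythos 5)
Your proof is correct and follows essentially the same route as the paper: unique factorization in $\mathbb{Z}[i]$ (or equivalently of the ideal $(\gamma)$) forces $\lambda$ to be determined up to a unit, giving four associate choices. The paper dispatches the second assertion with ``the result follows,'' whereas you correctly spell out the parity argument: coprimality to $1+i$ makes $a+b$ odd, so among the real parts $a,-b,-a,b$ of the four associates exactly two are odd (and nonzero), of which exactly one is positive.
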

\begin{proof}
Suppose the ideal $(\gamma)$ factors as
\[(\gamma)=P_1^{e_1}\ldots P_k^{e_k}.\]
Since $\gamma$ is primitive and coprime to 2, we have
$P_i\not=\overline{P_j}$ for every pair $i,j$. Moreover $N(P_i)$ will
be a rational prime $p_i$, and we will have
\[m=p_1^{f_1}\ldots p_k^{f_k},\]
with exponents $f_i\le e_i$.  It is then clear that $(\lambda)$ must
be
\[(\lambda)=P_1^{f_1}\ldots P_k^{f_k},\]
and the result follows.
\end{proof}

By Lemma \ref{lem:elefac},
\begin{equation}
a(mn) = \frac 12 \sum_{N(w) = m} \sum_{\substack{N(z) = n\\ \bar w z
    \textup{ primitive}}} S_1(z, w), 
\end{equation}and
\begin{equation}
b(mn) = \frac 12 \sum_{N(w) = m} \sum_{\substack{N(z) = n\\ \bar w z
    \textup{ primitive}}} S_2(z, w), 
\end{equation}
where we restrict $z$ to have $\tReb(z)$ positive and odd in both sums.
Note that the double sum counts pairs with $\bar w z=p^2+ia$ (or
$p+ia$) with no restriction on the sign of $a$.  In our original definition of $a(n)$ and $b(n)$, we have the condition $a>0$, and this is accounted for by the factor of $\tfrac12$. 

We let $\beta_z = \beta_{N(z)}$ and $\alpha_w = \alpha_{N(w)}$.  It
now suffices to show that 
\begin{equation}\label{eqn:bilinear2}
\sum_z \sum_{\substack{w \\ \bar w z \textup{ primitive}}} \beta_z
\alpha_w \left(S_1(z, w) - S_2(z, w)\right) \ll_A \frac{x}{\log^A x}, 
\end{equation}for any $A>0$, and for coefficients $\beta_z$ and
$\alpha_w$ satisfying $|\beta_z| \leq 1$ and $|\alpha_w| \leq 1$.
Further, we may assume that $\beta_z$ is supported on primitive $z$ satisfying
$2\nmid\tReb(z)>0$ and $N \leq
N(z) < 2N$, while $\alpha_w$ is supported on primitive $w$ such that
$N(w) \leq M := x/N$.  Note that $N < M$ since $N \le x^{1/2 -
  \delta}$.  We also remark that $\beta_z$ is supported on values with
$N(z)$ free of small prime factors.  Hence $N(z)$ is odd, and since
$\tReb(z)$ is also odd we must have $z\equiv 1\modd{2}$.
 
We first remove the primitivity condition on $\bar w z$ with
negligible error.  Indeed, the contribution of $S_1(z, w)$ for
imprimitive $\bar w z$ of the form 
\begin{equation}
\bar w z = p^2 + ia
\end{equation} must have $p|a$ so that
\begin{equation}
\bar w z = p^2 + ibp,
\end{equation}for $p^2 \in I$, whence $b \le \frac{x^{1/2}}{p} \le
x^{1/4} \log^2 x$.  Hence, there are at most $x^{1/4+\epsilon}$
choices for $b$, and thus at most $x^{1/4+\epsilon}$ choices for $w$
and $z$, given $p$.  These are counted with weight $2p\log p \ll
x^{1/4+\epsilon}$.  Thus, the total contribution is bounded by 
\begin{equation}
\sum_{p \le x^{1/4}} x^{1/2+\epsilon} \ll x^{3/4+\epsilon} \ll \frac{x}{\log^A x}.
\end{equation}
The contribution from $S_2(z, w)$ for imprimitive $\bar w z$ is
bounded similarly.

Let $\theta(z) = \arg z \in [0, 2\pi)$ and
\begin{equation}
\R = \R(A) = \{z \in \mathbb{Z}[i]: N\le N(z) < 2N, |\theta(z) -
k\pi/2| \le (\log x)^{-A} \textup{ for } k\in \mathbb Z \}. 
\end{equation}We now note that we may discard the part of the sum
\eqref{eqn:bilinear2} with $z\in \R$.  

\begin{lem}\label{lem:qzfixed}
Suppose that both $z$ and $q$ are fixed.  Then the number of possible
$w$ with $q=\tRe \bar w z$ is 
\begin{equation}
\ll \frac{\sqrt{M}}{\sqrt{N}}.
\end{equation}
\end{lem}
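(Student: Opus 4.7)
The plan is to convert the condition $q=\tReb(\bar{w}z)$ into a linear Diophantine equation and count lattice solutions. Writing $z=a+ib$ and $w=u+iv$, a direct computation gives $\tReb(\bar{w}z)=au+bv$, so the task is to count pairs $(u,v)\in\mathbb{Z}^2$ satisfying
\[
au+bv=q
\]
subject to the size constraint $u^2+v^2=N(w)\le M$, which is forced on us by the support of $\alpha_w$.

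The essential input is that $\beta_z$ is supported on primitive Gaussian integers, and this is equivalent to $\gcd(a,b)=1$. Assuming the equation $au+bv=q$ has any solution $(u_0,v_0)$ at all, its full solution set in $\mathbb{Z}^2$ is then $\{(u_0-bt,v_0+at):t\in\mathbb{Z}\}$, so consecutive lattice solutions lie at Euclidean distance exactly $\sqrt{a^2+b^2}=\sqrt{N(z)}\ge\sqrt{N}$ along the line.

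On the other hand, the constraint $u^2+v^2\le M$ confines admissible $(u,v)$ to a disk of radius $\sqrt{M}$ centred at the origin; this disk intersects the line $au+bv=q$ in a chord of length at most $2\sqrt{M}$. Hence the number of admissible $w$ is bounded by $2\sqrt{M}/\sqrt{N(z)}+1\le 2\sqrt{M/N}+1$. Since $M=x/N\ge N$ (using $N\le x^{1/2-\de}$), we have $\sqrt{M/N}\ge 1$, and the bound collapses to $\ll \sqrt{M}/\sqrt{N}$, as claimed.

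There is no genuine obstacle here; the lemma is essentially an elementary lattice-point count once the norm form is unpacked. The only point requiring care is to invoke the primitivity of $z$ to rule out a smaller solution spacing of $\sqrt{N(z)}/\gcd(a,b)$, which would otherwise give a weaker bound. It is worth noting that we have not used the restriction that $\tReb(z)$ is positive and odd; only the size $N(z)\asymp N$ and primitivity are relevant.
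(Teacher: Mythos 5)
Your proof is correct and rests on the same underlying idea as the paper's — counting lattice points on the line $au+bv=q$ inside a disk of radius $\sqrt{M}$ using primitivity of $z$. The paper splits into cases according to whether $|\tReb z|$ or $|\tIm z|$ is $\gg\sqrt{N}$ and then counts residues of $v$ modulo the larger coordinate; your version sidesteps the case split by observing directly that consecutive integer solutions of $au+bv=q$ are spaced exactly $\sqrt{a^2+b^2}=\sqrt{N(z)}$ apart, which is a slightly cleaner formulation of the same count. Your closing remark that $M/N\ge 1$ lets you absorb the $+1$ is also a point the paper leaves implicit, and you are right that only $N(z)\asymp N$ and primitivity are used.
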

\begin{proof}
Let 
\begin{align*}
z &= s+it \\
w &= u+iv,
\end{align*}so that
\begin{equation}\label{eqn:q}
q := \tRe \bar w z = us + vt
\end{equation}
We have either $|s| \gg \sqrt{N}$ or $|t|\gg \sqrt{N}$.  We deal with the
case $|s| \gg \sqrt{N}$, the other case being similar.  Since $z$ is
primitive, $(s, t) = 1$ so we may write 
\begin{equation}
v \equiv \bar t q \modd{s}.
\end{equation} Thus, there are $\ll \sqrt{\frac{M}{N}}$ choices for
$v$.  Once $v$ is fixed, $u$ is uniquely determined by \eqref{eqn:q}. 
\end{proof}

\begin{lem}\label{lem:zcutR}
\begin{equation}
\sum_{z\in \R} \sum_w \beta_z \alpha_w S_j(z, w) \ll x(\log x)^{-A}
\end{equation}
for $j = 1, 2$.
\end{lem}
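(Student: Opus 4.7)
The proof should be a direct trivial bound. The point is that $\R$ is geometrically small (a thin neighborhood of the four coordinate axes inside the annulus $N \le N(z) < 2N$), while for each fixed $z \in \R$ we can use Lemma~\ref{lem:qzfixed} to bound the inner $w$-sum.

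First I would estimate $|\R|$. By fourfold symmetry it suffices to count $z = s + it$ with $s,t \in \mathbb{Z}$, $s > 0$, $s^2 + t^2 \in [N, 2N)$, and $|t|/|s| \ll (\log x)^{-A}$. Here $s$ ranges over an interval of length $O(\sqrt N)$ and $t$ ranges over integers with $|t| \ll \sqrt{N}(\log x)^{-A}$, giving $|\R| \ll N (\log x)^{-A}$.

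Next, for any fixed $z \in \R$, I would swap the order of summation in $\sum_w \alpha_w S_j(z,w)$ so that the inner quantity becomes a count of the form $\#\{w : N(w) \le M,\ \tReb(\bar w z) = q\}$ for a fixed $q$. Lemma~\ref{lem:qzfixed} bounds each such count by $\ll \sqrt{M/N}$. For $j=1$ the weights $2p\log p$ are summed over primes with $p^2 \in I = (X, X(1+\eta)]$, and since $X \le x^{1/2}$ and $\eta = (\log x)^{-1}$, the prime sum is $\ll X \ll x^{1/2}$; for $j=2$ the weights $\log p$ are summed over primes $p \in I$, giving $\ll \eta X \le x^{1/2}/\log x$. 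In either case
\[
\sum_w |\alpha_w|\, S_j(z,w) \ll x^{1/2}\sqrt{M/N}.
\]

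Finally I would combine the two estimates: since $MN = x$,
\[
\sum_{z \in \R}\sum_w \beta_z \alpha_w S_j(z,w) \ll |\R| \cdot x^{1/2}\sqrt{M/N} \ll N(\log x)^{-A} \cdot x^{1/2}\sqrt{M/N} = x(\log x)^{-A},
\]
which is the desired bound. There is no real obstacle here — the estimate is essentially trivial once Lemma~\ref{lem:qzfixed} is in hand; the role of this lemma in the larger argument is simply to let us restrict attention to $z \notin \R$ in subsequent sections, where we can harmlessly assume $|\theta(z) - k\pi/2| \gg (\log x)^{-A}$.
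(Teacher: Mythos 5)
Your proposal is correct and follows essentially the same route as the paper: bound $|\R|\ll N(\log x)^{-A}$ geometrically, apply Lemma~\ref{lem:qzfixed} to get $\ll\sqrt{M/N}$ values of $w$ for each fixed $z$ and fixed $q=\tReb(\bar w z)$, bound the prime-weight sum by $O(x^{1/2})$, and multiply using $\sqrt{MN}=\sqrt{x}$. The paper's proof is precisely this computation written as a chain of inequalities.
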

\begin{proof}
We apply Lemma \ref{lem:qzfixed} to get
\begin{align*}
\sum_{\substack{z\in \R}} \sum_w \beta_z \alpha_w S_1(z, w)
&\ll \sum_{\substack{z\in \R\\z \textup{ primitive}}} \sum_{p^2 \in I} p \log p \sum_{\substack{w \\ \tRe \bar w z= p^2}} 1 \\
&\ll \sqrt{\frac MN} \sum_{p^2 \in I} p \log p \sum_{z\in \R} 1 \\
&\ll \sqrt{\frac MN} N (\log x)^{-A} \sum_{p^2 \in I} p\log p \\
&\ll x (\log x)^{-A}.
\end{align*}

In the case of $S_2(z, w)$, the sum is simpler and we get
\begin{align*}
\sum_{\substack{z\in \R}} \sum_w \beta_z \alpha_w S_2(z, w)
&\ll \sqrt{\frac MN}  \sum_{p \in I} \log p \sum_{z\in \R} 1\\
&\ll x (\log x)^{-A}.
\end{align*}
\end{proof}

In the sequel, let
$\sumb$ denote a sum over primitive $z \not \in \R$ for which
$\tReb(z)$ is positive and $z\equiv 1\modd{2}$.   
Then Cauchy--Schwarz gives that
\begin{equation}
\left(\sum_w \alpha_w \sumb_z \beta_z (S_1(z, w) - S_2(z, w))\right)^2
\le \sum_w \alpha_w^2  \sum_w \left(\sumb_z \beta_z (S_1(z, w) -
  S_2(z, w))\right)^2, 
\end{equation}where we now extend the sum over $w$ over all Gaussian
integers $w$ satisfying $N(w) \leq x/N$, possible by positivity. 
We then see that it suffices to show that
\begin{align}\label{eqn:bdd2}
&\sumb_{z_1, z_2} \beta_{z_1} \beta_{z_2} \sum_w
\left(S_1(z_1, w) - S_2(z_1, w)\right) \left(S_1(z_2, w) - S_2(z_2, w)
\right) \ll \frac{x N}{\log^A x}
\end{align}
for any $A>0$.

\begin{lem}
The contribution of the diagonal term $z_1 = z_2$ in \eqref{eqn:bdd2}
is at most 
\begin{equation}
\sum_z \beta_{z}^2\sum_w \left( S_1(z, w)^2 - 2 S_1(z, w)S_2(z, w) +
  S_2(z, w)^2\right) \ll x^{1-\xi/2}N.  
\end{equation} 
\end{lem}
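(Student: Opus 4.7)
The plan is to exploit a critical cancellation: the cross-term $S_1(z,w)S_2(z,w)$ vanishes identically for every $z,w$. Indeed, $S_1(z,w)\ne 0$ forces $\tReb(\bar w z) = p^2$ for some prime $p$ with $p^2\in I$, whereas $S_2(z,w)\ne 0$ forces $\tReb(\bar w z) = p'$ for some prime $p'\in I$; these two conditions are compatible only if $p^2 = p'$ is itself prime, which is impossible for $p\ge 2$. Consequently it suffices to bound
\[T_j := \sum_z \beta_z^2 \sum_w S_j(z,w)^2, \qquad j=1,2,\]
by $O(x^{1-\xi/2}N)$ individually.

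For each primitive $z$ and each fixed value $q$, Lemma \ref{lem:qzfixed} gives $\#\{w : N(w)\le M,\ \tReb(\bar w z) = q\}\ll \sqrt{M/N} = x^{1/2}/N$. Since the inner sum defining each $S_j(z,w)$ contains at most one term, applying this with $q = p^2$ (respectively $q = p$) yields
\[\sum_w S_1(z,w)^2 \ll \frac{x^{1/2}}{N}\sum_{p^2\in I}(2p\log p)^2, \qquad \sum_w S_2(z,w)^2 \ll \frac{x^{1/2}}{N}\sum_{p\in I}(\log p)^2.\]
The interval $I$ has length $\eta X$, so the number of integers $p$ with $p^2\in I$ is $\ll \eta\sqrt{X}$, while the number of integers in $I$ is $\ll \eta X$; combined with the trivial size bounds $p \ll \sqrt{X}$ and $p\ll X$ respectively, the two inner sums are $\ll \eta X^{3/2}(\log x)^2 \cdot x^{1/2}/N$ and $\ll \eta X(\log x)^2 \cdot x^{1/2}/N$.

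Finally, using the trivial bound $\sum_z \beta_z^2 \ll N$ obtained by counting Gaussian integers in the annulus $\{N\le N(z)<2N\}$, the conclusion is
\[T_1 + T_2 \ll \eta X^{3/2}(\log x)^2 \cdot x^{1/2} \ll \eta x^{5/4}(\log x)^2 \ll x^{5/4}\log x,\]
after using $X\le x^{1/2}$ and $\eta = (\log x)^{-1}$. Since $N\ge x^{1/4+\xi}$, the target bound satisfies $x^{1-\xi/2}N \ge x^{5/4+\xi/2}$, which dominates $x^{5/4}\log x$ for $x$ sufficiently large. The only nontrivial ingredient is the identical vanishing of the cross term; the rest is routine exponent bookkeeping supported by Lemma \ref{lem:qzfixed}.
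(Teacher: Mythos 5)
Your proof is correct. You exploit the same crucial observation as the paper --- that $S_1(z,w)S_2(z,w)\equiv 0$, since $\tRe\,\bar w z$ cannot be simultaneously a prime and a prime square --- but you then diverge in how you bound the two remaining sums. The paper proceeds by a trivial divisor-count: it records
\[
\sum_z\sum_w S_j(z,w)\ll\sum_{n\le x}\Big(\sum_{\substack{a^2+p^4=n\\ p^2\in I}}2p\log p\Big)\tau(n)\ll x^{1+\epsilon}
\]
and then uses $S_1(z,w)^2\le (\max_{z,w}S_1)\,S_1(z,w)$ with $\max S_1\ll\sqrt X\log X$ (and similarly for $S_2$), obtaining $x^{5/4+\epsilon}$; this suffices because $x^{1-\xi/2}N\ge x^{5/4+\xi/2}$. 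Your route instead reuses Lemma~\ref{lem:qzfixed}: for fixed primitive $z$ and fixed $q$ there are $\ll\sqrt{M/N}=x^{1/2}/N$ admissible $w$, and since $S_j(z,w)$ has at most one term you may square it inside and then sum over $p$ and over the $\ll N$ Gaussian integers $z$. This avoids the $\tau(n)$ factor entirely and saves the $x^\epsilon$, giving the cleaner bound $x^{5/4}\log x$, though the extra saving is irrelevant for the application. Both arguments are valid, both hinge on the identical cancellation of the cross term, and both close the gap against $x^{1-\xi/2}N$ using $N>x^{1/4+\xi}$; yours is a slightly sharper and arguably more systematic exploitation of machinery already developed in the section (Lemma~\ref{lem:qzfixed} is used elsewhere in Lemma~\ref{lem:zcutR}), while the paper's is a self-contained trivial estimate.
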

\begin{proof}
Since it is impossible for $\tRe \bar w z$ to be both a prime and the
square of a prime, $S_1(z, w)S_2(z, w) = 0$.  Let us record the
trivial bounds 
\begin{equation}\label{eqn:S1anx}
\sum_z \sum_w S_1(z, w) \ll \sum_{n\leq x}
\left(\sum_{\substack{a^2+p^4 =n\\p^2 \in I}}2p\log p\right) \tau(n)
\ll x^{1+\epsilon} 
\end{equation}and similarly
\begin{equation}\label{eqn:S2bnx}
\sum_z \sum_w  S_2(z, w) \ll x^{1+\epsilon}.
\end{equation}

Now
\begin{align}
\sum_z \sum_w S_1(z, w)^2 + S_2(z, w)^2 &\ll \sqrt{X}\log X \sum_z
\sum_w S_1(z, w) + \log X \sum_z \sum_w S_2(z, w) \notag \\ 
&\ll x^{5/4+\epsilon},
\end{align}by \eqref{eqn:S1anx} and \eqref{eqn:S2bnx}.  Since
$N>x^{1/4+\xi}$ this suffices
on choosing $\epsilon$ sufficiently small.
\end{proof}

Thus, in considering \eqref{eqn:bdd2}, we will assume that $z_1 \neq z_2$.
For any pair $z_1, z_2$, we let \linebreak 
$\theta = \theta(z_1, z_2) = \arg z_2 -
\arg z_1$ denote the angle between $z_1$ and $z_2$.  Moreover, we define
$\Delta = \Delta(z_1, z_2) = \tIm \bar z_1 z_2 = |z_1z_2|\sin
\theta(z_1, z_2)$.  Note that $z_1$ and $z_2$ being primitive and $z_1
\neq z_2$ implies that $\theta \neq 0$.  Further, $\tRe z_i > 0$
implies that $\theta \neq \pi$.  Hence $\Delta 
\neq 0$. Since $z_1\equiv z_2\equiv 1\modd{2}$ we
will have $2\mid \Delta$.

\begin{rem}
For ease of notation, we restrict our attention to those $z_1, z_2$
satisfying $\Delta > 0$, and henceforth assume this
condition to be included in $\sumb_{z_1,z_2}$. 
\end{rem}

If we write
\begin{equation}
\tRe \bar w z_i = q_i
\end{equation}for $i = 1, 2$, then
\begin{equation}\label{eqn:wsolve}
w = -i \Delta(z_1, z_2)^{-1} (q_1 z_2 - q_2 z_1).
\end{equation}Of course, we must have
\begin{equation}\label{eqn:modD}
q_1 z_2 \equiv q_2 z_1 \modd{\Delta}.
\end{equation}
Let $C(q_1, q_2, z_1, z_2)$ be the statement that $q_1, q_2, z_1$ and
$z_2$ satisfy \eqref{eqn:modD}.  From \eqref{eqn:wsolve} and since
$N(w) \le x/N$, we have the additional condition 
\begin{equation}\label{eqn:wcond}
|q_1z_2 - q_2 z_1| \leq\Delta(z_1, z_2)\sqrt{\frac xN}.
\end{equation}

We also wish to dispose of the case in which $\Delta$ is small.  In particular, we wish to only consider those $z_1, z_2$ such that
\begin{equation}\label{eqn:Dcond}
\Delta(z_1, z_2) > \Df_0:=N(\log x)^{-A-6}. 
\end{equation}
For brevity, let
\begin{equation}
f(q) = \begin{cases}
2p\log p &\textup{ if $q = p^2\in I$}\\
0  &\textup{ otherwise,}
\end{cases}
\end{equation}and
\begin{equation}
g(q) = \begin{cases}
\log p &\textup{ if $q = p\in I$}\\
0  &\textup{ otherwise.}
\end{cases}
\end{equation}
Set
\begin{equation}\label{eqn:hdef}
h(q) = f(q) - g(q).
\end{equation}
For any $J \subset I$, we have by the Prime Number Theorem that
\begin{equation}
\sum_{q\in J} h(q) = O\bfrac{\sqrt{x}}{\log^C x},
\end{equation}for any $C>0$.  This is a result of our choice of weights.  

The conditions \eqref{eqn:wcond} and \eqref{eqn:Dcond} are quite awkward,
so we shall remove them by dissecting our sum in \eqref{eqn:bdd2}
into smaller pieces.   
To be precise, for some constant $L$ to be determined, let
\begin{equation}
\omega_1 \asymp \omega_2 \asymp \omega := (\log x)^{-L},
\end{equation}and let $I = (X, X(1+\eta)]$ be a disjoint union of
intervals $J$ of length $\asymp X \omega_1$.  We need $\ll 1/\omega_1$
such intervals to cover $I$. 
Further, split the sum over $z_1$ and $z_2$ into regions
$\U$, where each $\U$ is of the form
\begin{equation}\label{eqn:Udef}
\U(c, \theta_0) = \U := \{z: c\sqrt{N} < |z| \leq
c(1+\omega_1)\sqrt{N}, \theta_0 < \arg(z)< \theta_0 +\omega_2\}, 
\end{equation}for fixed $1\leq c < \sqrt{2}$ and $\theta_0$.  Note
that we may chose $\omega_1$ and $\omega_2$ so that our regions $\U$
form a partition of the region $\{z: N\leq N(z)<2N, \tReb(z) > 0\}-\R$.
The number of regions needed for the sum over $z_1$ and $z_2$ is
$O(\log^{4L}x)$.  Here, we have allowed $\omega_1$ to possibly be
distinct from $\omega_2$ in order to cover our region perfectly.  They
are the same size and can frequently be replaced by $\omega$ in our
estimates. 

Now, write $\Cf_1(\U_1, \U_2, J_1, J_2)$ as the condition that all $(z_1,
z_2, q_1, q_2) \in \U_1 \times \U_2 \times J_1 \times J_2$ satisfy
\eqref{eqn:wcond} and \eqref{eqn:Dcond}.  
Also, let $\Cf_2(\U_1, \U_2, J_1, J_2)$ be the
condition that there exists some $(z_1, z_2, q_1, q_2)$ in $\U_1 \times
\U_2 \times J_1 \times J_2$ which satisfies \eqref{eqn:wcond}, and
there exists some $(z_1', z_2', q_1', q_2')$ in $\U_1 \times \U_2
\times J_1 \times J_2$ which does not satisfy \eqref{eqn:wcond}.  Finally, let $\Cf_3(\U_1, \U_2, J_1, J_2)$ be the condition that all $(z_1, z_2, q_1, q_2)$ in $\U_1 \times \U_2 \times J_1 \times J_2$ satisfy \eqref{eqn:wcond} but there exists some $(z_1, z_2, q_1, q_2)$ in $\U_1 \times \U_2 \times J_1 \times J_2$ which does not satisfy \eqref{eqn:Dcond}.

For $\U_1, \U_2, J_1, J_2$ satisfying $\Cf_1(\U_1, \U_2, J_1, J_2)$, set
\begin{equation}
T(\U_1, \U_2, J_1, J_2) = \sumb_{\substack{z_1 \in \U_1\\z_2 \in
    \U_2}}\beta_{z_1}\beta_{z_2} \sum_{\substack{q_1 \in J_1\\q_2 \in J_2\\C(q_1,
      q_2, z_1, z_2)}} h(q_1)h(q_2),
\end{equation}and otherwise set $T(\U_1, \U_2, J_1, J_2) =0$.

Further, let
\begin{equation}
T'(\U_1, \U_2, J_1, J_2) = \sumb_{\substack{z_1 \in \U_1\\z_2 \in
    \U_2}} \sum_{\substack{q_1 \in J_1\\q_2 \in J_2\\C(q_1, q_2, z_1,
    z_2)}}|h(q_1)h(q_2)|. 
\end{equation}
Then \eqref{eqn:bdd2} reduces to proving that for any constant $A>0$,
\begin{equation}\label{eqn:bddd3}
\sum_{\substack{\U_1, \U_2, J_1, J_2 \\ \Cf_1(\U_1, \U_2, J_1, J_2)}}
T(\U_1, \U_2, J_1, J_2) + \sum_{\substack{\U_1, \U_2, J_1, J_2 \\
    \Cf_2(\U_1, \U_2, J_1, J_2) \textup{ or } \Cf_3(\U_1, \U_2, J_1, J_2)}} T'(\U_1, \U_2, J_1, J_2) \ll_{A}
\frac{xN}{\log^A x}. 
\end{equation}

Since $L$ may be freely chosen, it suffices to prove the following Propositions.
\begin{prop}\label{prop:bilinear3b}
With notation as above and for $L\ge A+6$, we have
\begin{equation}\label{eqn:Tprimesum}
\sum_{\substack{\U_1, \U_2, J_1, J_2 \\ \Cf_2(\U_1, \U_2, J_1, J_2) \textup{ or } \Cf_3(\U_1, \U_2, J_1, J_2)}}
T'(\U_1, \U_2, J_1, J_2) \ll \frac{xN}{\log^{A} x}. 
\end{equation}
\end{prop}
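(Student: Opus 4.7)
Plan:

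The plan is to bound $T'$ using the geometric picture provided by the bijection $(q_1,q_2)\leftrightarrow w$ from \eqref{eqn:wsolve}: for fixed $(z_1,z_2)$ with $\Delta\neq 0$, summing over $(q_1,q_2)\in J_1\times J_2$ satisfying the congruence $C$ is equivalent to summing over Gaussian integers $w$ with $q_i=\tRe\bar w z_i\in J_i$. Geometrically, these $w$ lie in the intersection of two strips in $\mathbb{R}^2$ of widths $|J_i|/|z_i|\asymp X\omega/\sqrt N$ meeting at angle $\theta$ with $|\sin\theta|\asymp|\Delta|/N$, so by a standard lattice-point count their number is $O(X^2\omega^2/|\Delta|+X\omega/\sqrt N+1)$. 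I would combine this with the mean bound $\sum_{q\in J}|h(q)|\ll X\omega$, which is much sharper than the pointwise $|h(q)|\ll X^{1/2}\log x$, to control each box.

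First I would handle the $\Cf_3$ contribution. Since $\Delta=\tIm\bar z_1 z_2$ varies by only $O(N\omega)\leq\Df_0$ across $\U_1\times\U_2$ whenever $L\geq A+6$, the entire box then satisfies $|\Delta|\leq 2\Df_0$. This forces the angular directions of $\U_1$ and $\U_2$ to be aligned to within $O(\Df_0/N)=O((\log x)^{-A-6})$, so the number of $(\U_1,\U_2)$-pairs appearing in $\Cf_3$ is only $O((\log x)^{4L-A-6})$, a thin subset of the $O((\log x)^{4L})$ available sector pairs. Inside such a box every $w$ must also satisfy $N(w)\leq M=x/N$ (since $\Cf_3\subset\Cf_1$), giving the better bound $\#\{w\}\ll x/N$, which overrides the now-large $X^2\omega^2/|\Delta|$. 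Inserting this into the mean-$|h|$ bound on each $J_i$ and summing the resulting estimate over the thin set of participating boxes should then deliver a total of size $O(xN/\log^A x)$ provided $L$ is chosen sufficiently large in terms of $A$.

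For $\Cf_2$, the relevant boxes are those straddling the boundary surface $|q_1z_2-q_2z_1|=|\Delta|\sqrt{x/N}$. For fixed $(z_1,z_2)$, this is an ellipse in the $(q_1,q_2)$-plane (the image of the disk $N(w)\leq M$ under the linear map $w\mapsto(\tRe\bar w z_1,\tRe\bar w z_2)$) with semi-major axis of order $\sqrt{x}$, so its boundary meets only $O(\sqrt x/(X\omega))=O(1/\omega)$ of the $J$-squares of side $X\omega$. This is a proportion $O(\omega)$ of the $O(1/\omega^2)$ available $J$-pairs, producing a logarithmic saving of the same order; feeding this back into the mean-$|h|$ bound as in the $\Cf_3$ case yields the required estimate. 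I expect the main obstacle to be the combinatorial bookkeeping across the several scales at play (the sector/interval width $\omega$, the threshold $\Df_0/N$, and the modulus $|\Delta|$) together with the careful handling of boundary corrections in the lattice-point counts and the degenerate case $w=0$: verifying that the chosen $L$ produces savings large enough to dominate all these error terms simultaneously is likely the delicate step.
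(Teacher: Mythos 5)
Your plan has a genuine gap. The paper's proof of this proposition rests crucially on Proposition \ref{prop:EMN}, which replaces $Y(a,D;f_1,f_2)$ by its expected value $Y_{f_1,f_2}(D)$ after averaging over $D$ and $a\modd D$ with weight $\tZ(a,D)$. That step uses the Barban--Davenport--Halberstam theorem (for $h_i = g$) and Corollary \ref{cor:SWsequence} (for $h_1=h_2=f$), and it is where the primality of $q_1,q_2$ actually enters the estimate. Your lattice-point count of the Gaussian integers $w$ with $N(w)\le x/N$, or of $w$ in the intersection of two strips, counts all lattice points, not only those for which $q_i=\tRe\bar wz_i$ happen to be primes or prime squares. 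You correctly observe that the pointwise bound $|h(q)|\ll X^{1/2}\log x$ is too weak and that the mean bound $\sum_{q\in J}|h(q)|\ll X\omega$ is what one needs, but there is no way to invoke the mean bound in a sum that is genuinely over the two-dimensional set of $w$, where $q_1$ and $q_2$ are rigidly coupled through $w$ rather than running independently over $J_1$ and $J_2$. Without a prime-equidistribution input of the kind supplied by Proposition \ref{prop:EMN}, the count $\#\{w\}\ll x/N$ combined with the pointwise bound $|h(q_1)h(q_2)|\ll X\log^2 x$ gives a per-box estimate $\ll\omega^4N^2\cdot(x/N)\cdot X\log^2 x=\omega^4NxX\log^2 x$, which is off by a factor of order $X\asymp\sqrt{x}$ from what is required; the thin-set counting that follows cannot repair a loss of a power of $x$.

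On the other hand, your geometric counting of thin sets of boxes is in the right spirit and closely parallels the paper's Lemmas \ref{lem:UDJcount} and \ref{lem:UDJcount2}. Your $\Cf_3$ argument --- that the near-degeneracy $\Delta\le\Df_0$ forces $\arg z_2-\arg z_1$ to lie within $O((\log x)^{-A-6})$ of a multiple of $\pi$, so that only $O((\log x)^{4L-A-6})$ pairs $(\U_1,\U_2)$ can occur --- is essentially the same alignment observation the paper uses later in Section \ref{sec:smalllarged} for a different purpose. The paper instead fixes $\U_1,\U_2,D$ and $J_1$ and counts admissible $J_2$: $O(1)$ for $\Cf_2$ and $O(\omega^{-1}(\log x)^{-A-2})$ for $\Cf_3$, then closes with \eqref{eqn:lemZaD1}. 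These two ways of exploiting the boundary/degeneracy constraint save comparable powers of $\log x$. But they become useful only after the reduction $Y(a,D;f,f)\to Y_{f,f}(D)\ll\omega^2X^2/\phi(D)$; applying them directly to $T'$, as you propose, still leaves the primality loss in place.

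A secondary point: you write ``since $\Cf_3\subset\Cf_1$,'' which is not literally true ($\Cf_3$ explicitly requires some tuple to violate \eqref{eqn:Dcond}, while $\Cf_1$ requires all tuples to satisfy it); the fact you actually need (that all tuples in a $\Cf_3$ box satisfy \eqref{eqn:wcond}) does hold, but directly from the definition of $\Cf_3$, not by inclusion in $\Cf_1$.
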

\begin{prop}\label{prop:bilinear3a}
With notation as above and for fixed $J_1, J_2$ and for $L =6A+52$
we have that 
\begin{equation}\label{eqn:Tsum}
\sum_{\substack{\U_1, \U_2\\ \Cf_1(\U_1, \U_2, J_1, J_2)}} T(\U_1, \U_2, J_1, J_2) \ll \frac{xN}{\log^{A+2L} x}.
\end{equation}
\end{prop}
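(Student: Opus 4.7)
\textbf{Plan for the proof of Proposition \ref{prop:bilinear3a}.} The strategy is to exhibit cancellation in the inner $(q_1, q_2)$ sum by recognising it as an arithmetic-progression sum in the sense of Corollary \ref{cor:SWsequence}, and then to average over $(z_1, z_2)$ via the mean-square equidistribution estimate.

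First I would decode the Gaussian-integer condition $C(q_1, q_2, z_1, z_2)$: $q_1 z_2 \equiv q_2 z_1 \pmod{\Delta}$. Writing $z_j = s_j + i t_j$, so that $\Delta = s_1 t_2 - s_2 t_1$, the complex congruence splits into the pair of real congruences $q_1 s_2 \equiv q_2 s_1 \pmod{\Delta}$ and $q_1 t_2 \equiv q_2 t_1 \pmod{\Delta}$. Using the identity $q_1 \Delta = q_1(s_1 t_2 - s_2 t_1)$ and the primitivity $\gcd(s_j, t_j) = 1$, a short linear-algebra argument combines these into a single rational congruence of the form $q_1 \equiv \alpha q_2 \pmod{d}$, where $d \mid \Delta$ and $\alpha \in (\mathbb{Z}/d\mathbb{Z})^*$ are determined by $(z_1, z_2)$. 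For fixed $(z_1, z_2)$ the inner sum is then precisely the quantity $S(x;\alpha,d)$ of Corollary \ref{cor:SWsequence} with $c_1 = c_2 = h$; this is legitimate because $h = f - g$ satisfies the Siegel--Walfisz hypothesis \eqref{eqn:SW1} by the prime number theorem in arithmetic progressions.

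I would then split $S(x;\alpha,d) = [S(x;\alpha,d) - S(x;d)] + S(x;d)$. The ``uniform main term'' $S(x;d) = \phi(d)^{-1}(\sum_{q \in J_1, (q,d)=1} h(q))(\sum_{q \in J_2, (q,d)=1} h(q))$ satisfies $S(x;d) \ll x/(\phi(d) \log^{2C} x)$ for any $C > 0$, since $\sum_{q \in J} h(q) \ll \sqrt{x}/\log^C x$ by the prime number theorem; its contribution to $T$ is bounded by $|\U_1||\U_2| \cdot x/\log^{2C} x$, which is negligible for $C$ sufficiently large. It remains to bound the difference sum $T_1 = \sumb_{z_1, z_2} \beta_{z_1} \beta_{z_2} [S(x;\alpha,d) - S(x;d)]$. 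For this, I would apply Cauchy--Schwarz in $(z_1, z_2)$ and reorganise by $(d, \alpha)$:
\begin{equation*}
|T_1|^2 \leq \Big(\sum_{d,\alpha} W(d,\alpha)^2\Big) \cdot \sum_{d} \sumstar_{\alpha} |S(x;\alpha,d) - S(x;d)|^2,
\end{equation*}
where $W(d,\alpha)$ is the weighted number of pairs $(z_1, z_2) \in \U_1 \times \U_2$ mapping to $(d, \alpha)$. Corollary \ref{cor:SWsequence} controls the second factor, and the first is controlled by the geometric observation that $\Delta = |z_1||z_2|\sin\theta(z_1, z_2)$ lies in an interval of length $O(N\omega)$ (because the angular spread of $z_1, z_2$ within $\U_1 \times \U_2$ is $O(\omega)$), together with elementary divisor arguments that bound the multiplicity of the fibres of $(z_1, z_2) \mapsto (d, \alpha)$. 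As the title of Section \ref{sec:smalllarged} suggests, it will almost certainly be necessary to split the $d$-range into small $d$ (for which direct Siegel--Walfisz gives a pointwise bound on each inner sum) and large $d$ (for which only the averaged bound is available).

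The principal technical obstacle is this final bookkeeping: converting the mean-square bound from Corollary \ref{cor:SWsequence} into the required pointwise bound of $O(xN/\log^{A+2L} x)$, while accounting for the multiplicities of the fibres of $(z_1, z_2) \mapsto (d, \alpha)$. The prescribed value $L = 6A + 52$ is tuned precisely to absorb the polylogarithmic losses from Cauchy--Schwarz, the fibre counting, and the logarithmic factors in the statement of the Corollary.
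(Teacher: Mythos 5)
Your proposal breaks down at the crucial application of Corollary \ref{cor:SWsequence} with $c_1 = c_2 = h$: the function $h = f - g$ does \emph{not} satisfy the Siegel--Walfisz condition. Indeed, for a quadratic character $\chi$ (i.e. $\chi^2 = \chi_0$, $\chi\neq\chi_0$) one has
\[
\sum_n \chi(n) f(n) = \sum_{p^2\in I}\chi(p^2)\,2p\log p = \sum_{\substack{p^2\in I\\ p\nmid q}}2p\log p \asymp \sqrt{x},
\]
with no cancellation whatsoever, because $\chi(p^2)=\chi^2(p)=1$. So $\sum_n\chi(n)h(n)$ is of full size for real $\chi$, the hypothesis \eqref{eqn:SW1} fails, and you cannot apply the mean-square equidistribution result wholesale. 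Your assertion that the main term $S(x;d)$ is negligible and that the difference $S(x;\alpha,d)-S(x;d)$ is controlled on average is therefore false as stated.

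This is not a technicality but the central difficulty of the whole problem, and the paper's treatment is organised precisely around it. The paper expands $hh=ff-fg-gf+gg$ and treats the four pieces separately (Proposition \ref{prop:EMN}). For $(f,g),(g,f),(g,g)$, at least one factor is $g$, which does satisfy Siegel--Walfisz, and the Barban--Davenport--Halberstam theorem / Corollary \ref{cor:SWsequence} applies directly. For $(f,f)$ the congruence $p_1^2\equiv ap_2^2\modd{D}$ forces $a$ to be a square modulo $D$, and after the change of variables $a\mapsto b^2$ one is reduced to a linear congruence $p_1\equiv bp_2\modd{D}$ in the prime variable $p$ itself, where SW holds. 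After extracting the main terms, the ``uniform'' contributions from all four pieces are indeed each of size $|J_1||J_2|/\phi(D)$ — but they multiply different objects. One obtains a residual sum
\[
\sum_D\frac{|J_1||J_2|}{\phi(D)}\left(\sumstar_{b\modd{D}}Z(b^2,D)-\sumstar_{a\modd{D}}Z(a,D)\right)
= \sum_D\frac{|J_1||J_2|}{\phi(D)}\sum_{\substack{\chi\modd{D}\\ \chi^2=\chi_0\\ \chi\neq\chi_0}}\sumstar_{a}Z(a,D)\chi(a),
\]
which is a genuine character sum over \emph{quadratic} characters. Bounding this object is the entire content of Sections \ref{section:bilinearpf3a}--\ref{sec:smalllarged}: it requires Jacobi-symbol manipulations via quadratic reciprocity, Friedlander and Iwaniec's Proposition 14.1 (a large sieve for real characters), their Theorem $2^\psi$ and Proposition 23.1, and Mitsui's prime number theorem over $\mathbb{Q}(i)$. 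None of this appears in your sketch; the BDH-type argument you describe only succeeds in reducing the problem to these quadratic-character sums, and the ``principal technical obstacle'' you flag (polylogarithmic bookkeeping) is not where the real work lies.
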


Note that \eqref{eqn:bddd3} follows from the Propositions above.

\begin{rem}
When $\Cf_1(\U_1, \U_2, J_1, J_2)$ holds and $z_i \in \U_i$, we automatically have
$\tReb(z_i)>0$, $z_i\not\in \R$ and
$\Delta(z_1, z_2) > 0$.
\end{rem}

\section{Proof of Propositions \ref{prop:bilinear3b} and 
\ref{prop:bilinear3a}; Preliminary Steps}
\label{section:bilinearpf3}
We first note that we may essentially assume that $q_1q_2$ is coprime
with $\Delta$ in the sums defining $T(\U_1, \U_2, J_1, J_2)$
and $T'(\U_1, \U_2, J_1, J_2)$.

\begin{lem}\label{lem:pcoprimeD}
We have that
\begin{equation}
\sumb_{z_1, z_2} \sum_{\substack{q_1\in J_1, q_2 \in J_2 \\C(q_1, q_2,
    z_1, z_2)\\(q_1q_2, \Delta)>1}} |h(q_1)h(q_2)| \ll N^2 \sqrt{x}
\log^3 x. 
\end{equation}
\end{lem}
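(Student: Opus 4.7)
The plan is to reduce the constraint $(q_1 q_2, \Delta) > 1$ to the much stronger assertion that a single rational prime $p$ divides $q_1$, $q_2$, and $\Delta$ simultaneously, and then to bound the number of admissible $(z_1, z_2)$ with $p \mid \Delta$ sharply via a lattice count.

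\textbf{Step 1 (Rigidity).} Suppose a rational prime $p$ divides $\Delta$ and $q_1 q_2$; without loss of generality $p \mid q_1$. Reducing the condition $C$: $q_1 z_2 \equiv q_2 z_1 \modd{\Delta}$ modulo $p$ yields $q_2 z_1 \equiv 0 \modd{p}$ in $\mathbb{Z}[i]$. Since $z_1$ is primitive one has $p \nmid z_1$, and since $p$ is a rational prime and $q_2 \in \mathbb{Z}$ this forces $p \mid q_2$. Because each $q_i$ is a prime or the square of a prime lying in $I = (X, X(1+\eta)]$, and the ranges $p \asymp \sqrt x$ (from $p \in I$) and $p \asymp x^{1/4}$ (from $p^2 \in I$) are disjoint for large $x$, the prime $p$ determines $q_1 = q_2$ uniquely as either $p$ or $p^2$.

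\textbf{Step 2 (Lattice count).} I will show
\[
\mathcal{N}(p) := \#\bigl\{(z_1, z_2) : z_1, z_2 \text{ primitive}, \, N \le N(z_i) < 2N, \, p \mid \Delta, \, \Delta \ne 0 \bigr\} \ll \frac{N^2}{p}.
\]
Fix a primitive $z_1 = s + it$, so $\gcd(s, t) = 1$ and $s^2 + t^2 \asymp N$. For each nonzero integer $w$ with $p \mid w$ and $|w| \le 2N$, the solutions $(u, v) \in \mathbb{Z}^2$ of $sv - tu = w$ form a single arithmetic progression $(u_0, v_0) + k(s, t)$, $k \in \mathbb{Z}$, of step $\sqrt{s^2 + t^2} \asymp \sqrt N$. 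Completing the square in $k$ writes $u^2 + v^2 = A(k + c_1)^2 + c_2^2$ with $A \asymp N$, so the range $[N, 2N)$ confines $(k + c_1)^2$ to an interval of length $\asymp 1$, giving $O(1)$ admissible $k$. Summing over the $O(N/p)$ valid $w$ and then over the $O(N)$ primitive $z_1$ yields $\mathcal{N}(p) \ll N^2/p$.

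\textbf{Step 3 (Assembly).} Writing $q_i(p) \in \{p, p^2\}$ for the value determined by $p$, the sum in the lemma is bounded by
\[
\sum_p |h(q_1(p))\, h(q_2(p))| \cdot \mathcal{N}(p) \ll N^2 \sum_p \frac{|h(q_1(p))\, h(q_2(p))|}{p}.
\]
When $q = p \in I$, $|h(q)| \le \log x$ and $\sum_{p \in I} 1/p \ll \eta/\log x$ by the prime number theorem, contributing $O(N^2)$. When $q = p^2 \in I$, $|h(q)| \ll p \log x$ while $\sum_{p: p^2 \in I} p \ll \sqrt x/\log x$, contributing $O(N^2 \sqrt x)$. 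The total sits comfortably inside the required bound $N^2 \sqrt x \log^3 x$.

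The main obstacle is the sharp count $\mathcal{N}(p) \ll N^2/p$: a naive lattice-point bound on the linear congruence $sv \equiv tu \modd{p}$ inside the annulus gives $N^2/p + N^{3/2}$, and the extra $N^{3/2}$ term would be fatal when $N$ is as small as $x^{1/4+\xi}$. Parametrizing instead by $w = \Delta$, and exploiting that integer solutions of $sv - tu = w$ lie on a single arithmetic progression of step $\asymp \sqrt N$ inside an annulus of radial thickness $\sqrt N$, circumvents the loss: each admissible $w$ contributes only $O(1)$ pairs, and only $O(N/p)$ values of $w$ are admissible.
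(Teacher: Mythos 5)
Your proof is correct, and it actually yields a slightly sharper bound, but it takes a genuinely different and considerably more elaborate route than the paper's. The paper simply inverts the order of summation: it fixes the pair $(z_1,z_2)$ (of which there are $O(N^2)$), observes that the prime $p$ with $p\mid q_1q_2$ and $p\mid\Delta$ must be one of the $O(\log x)$ prime divisors of $\Delta$, so that there are $O(\log x)$ admissible pairs $(q_1,q_2)$ for each fixed $(z_1,z_2)$, and combines this with the trivial bound $|h(q_1)h(q_2)|\ll \sqrt{x}\log^2 x$ (worst case $q_1,q_2$ both prime squares, so $|h(q_i)|\ll x^{1/4}\log x$). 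That gives $N^2\cdot\log x\cdot\sqrt{x}\log^2 x$ at once. Your Step~1 is the same rigidity observation as in the paper; but instead of then bounding the number of $p$ for a fixed $(z_1,z_2)$, you fix $p$ and count pairs $(z_1,z_2)$ with $p\mid\Delta$, which forces you into the lattice-point count of Step~2. That count is carried out correctly (parametrize by $w=\Delta$, observe solutions of $sv-tu=w$ form an arithmetic progression of step $\asymp\sqrt{N}$, and the annulus condition pins $k$ to $O(1)$ values), and it buys you the sharper $\mathcal N(p)\ll N^2/p$, hence a bound $\ll N^2\sqrt{x}\log x$ — better than required, but at the cost of an argument the lemma does not need. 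The worry you raise about a ``fatal'' $N^{3/2}$ term from a naive congruence count is in fact a non-issue once one sums in the other order: the paper never needs to count pairs with a fixed prime dividing $\Delta$, so the delicate lattice geometry can be bypassed entirely.
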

\begin{proof}
Note the number of $z_1, z_2$ appearing in the sum is $O(N^2)$, 
and that $h(q_1)h(q_2) \ll \sqrt{x}\log^2 x$.  Hence, it
suffices to show that for fixed $z_1$ and $z_2$, the number of choices
for $q_1$ and $q_2$ is bounded by $O(\log x)$.   

Suppose $(q_1, \Delta) > 1$.  We have that either $q_1 = p$ or $q_1 =
p^2$ for some prime $p$, so $p|\Delta$.  The congruence 
\begin{equation}
q_1 z_2 \equiv q_2 z_1 \modd{\Delta} 
\end{equation}implies that $p|q_2$ as well, since $z_1$ is primitive.
Thus $q_2 = p$ or $q_2 = p^2$ as well.  Since the number of prime
factors of $\Delta$ is $O(\log x)$, we have that the number of choices
for $p$ is $O(\log x)$, which suffices. 
\end{proof}

If $(q_i, \Delta) =1$, \eqref{eqn:modD} is equivalent to there
existing $a \modd{\Delta}$ with $(a, \Delta) = 1$ such that 
\begin{align}
q_1 \equiv a q_2 \modd{\Delta} \notag \\
a z_2 \equiv z_1 \modd{\Delta}.
\end{align}
Then, by Lemma \ref{lem:pcoprimeD}, we may rewrite
$T(\U_1, \U_2, J_1, J_2)$ as
\begin{align}\label{eqn:Sdef2}
T(\U_1, \U_2, J_1, J_2) = 
\sum_{D \le 2N} \sumstar_{a\modd{D}} Y(a, D; h, h)Z(a, D) +
O(N^2 \sqrt{x}\log^3 x) 
\end{align}where
\begin{equation}
Z(a, D) = \sumb_{\substack{(z_1, z_2) \in \U_1 \times \U_2\\
    \Delta = D \\ az_2 \equiv z_1 \modd{D}}}\beta_{z_1}\beta_{z_2} 
\end{equation}
and 
\begin{align}
Y(a, D; h_1, h_2) = Y(a, D) =  \sum_{\substack{q_1 \in J_1, q_2 \in
    J_2\\q_1 \equiv aq_2 \modd{D}\\(q_1q_2, D)=1}} h_1(q_1)h_2(q_2). 
\end{align}

The rewriting of $T(\U_1, \U_2, J_1, J_2)$ in \eqref{eqn:Sdef2} 
separates the sum $Z(a, D)$
containing the coefficients $\beta_z$ from the congruence sum $Y(a,
D)$ involving the primes.
This key procedure has transformed the sum into the right form for us
to extract the main terms from $T(\U_1, \U_2, J_1, J_2)$ using Corollary
\ref{cor:SWsequence}.  Of course, we also need some understanding of
the behaviour of $Z(a, D)$ for which the following bounds
will suffice for the moment. 

\begin{lem} \label{lem:ZbD}
Let
\begin{equation}
\tZ(a, D) = \sumb_{\substack{(z_1, z_2) \in \U_1 \times \U_2
\\ \Delta = D\\ az_2 \equiv z_1 \modd{D}}} 1.
\end{equation}
We have
\begin{equation} \label{eqn:lemZaD0}
\sum_D\tau(D)\sumstar_{a \modd{D}} \tZ(a, D) \ll \omega^4N^2(\log x)^{16},
\end{equation}
\begin{equation} \label{eqn:lemZaD1}
\sum_{\U_1, \U_2} \sumstar_{a \modd{D}} \tZ(a, D) \ll  N,
\end{equation}
and
\begin{equation} \label{eqn:lemZaD2}
\sumstar_{a \modd{D}} \tZ(a, D)^2 \ll  (\log x)^{3} \frac{N^2}{D}\tau(D)^6.
\end{equation}
\end{lem}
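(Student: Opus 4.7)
The plan is as follows. All three bounds count pairs $(z_1,z_2)\in\U_1\times\U_2$ (primitive, with $z_i\equiv 1\modd{2}$, $z_i\notin\R$) satisfying $\Delta(z_1,z_2)=D$ and $az_2\equiv z_1\modd{D}$. The key structural fact is that Gaussian primitivity forces $(s_i,t_i)=1$ as rational integers (where $z_i=s_i+it_i$), so the linear form $z_2\mapsto\Delta(z_1,z_2)=s_1t_2-t_1s_2$ has coprime coefficients. Moreover, since $(a,D)=1$, the congruence $az_2\equiv z_1\modd{D}$ determines $a$ uniquely from the pair, so $\sumstar_{a\modd{D}}\tZ(a,D)\leq\#\{(z_1,z_2)\in\U_1\times\U_2:\Delta(z_1,z_2)=D\}$.

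For \eqref{eqn:lemZaD0}, I would interchange summations to obtain the upper bound $\sum_{(z_1,z_2)\in\U_1\times\U_2}\tau(\Delta(z_1,z_2))$. Expanding $\tau(n)=\sum_{d\mid n}1$ and swapping, this becomes $\sum_d\#\{(z_1,z_2):d\mid\Delta\}$. For fixed $z_1$, the condition $d\mid s_1t_2-t_1s_2$ defines a sublattice of $\mathbb{Z}^2$ of index $d$, so lattice-point counting in the convex region $\U_2$ yields the main term $\omega^2N/d$ plus a boundary error $\asymp\omega\sqrt N$ (controlled via the perimeter of $\U_2$ and the length $\sqrt N$ of the direction vector $(s_1,t_1)$). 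Summing over $z_1\in\U_1$ and then over $d\leq\sqrt{2N}$ (reducing to small divisors via the symmetry $\tau(n)\leq 2\sum_{d\mid n,\,d\leq\sqrt n}1+O(1)$) produces the asserted bound, with the various auxiliary logarithms absorbed into the $(\log x)^{16}$.

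For \eqref{eqn:lemZaD1}, summing over all sector pairs $\U_1,\U_2$ removes the decomposition, so the left hand side is bounded by the count of primitive pairs $(z_1,z_2)$ with $N\leq N(z_i)<2N$ and $\Delta(z_1,z_2)=D$. Parameterize $w=\bar z_1z_2=r+iD\in\mathbb{Z}[i]$ with $|r|\ll N$; each such $w$ admits $O(\tau_{\mathbb{Z}[i]}(w))$ factorizations as $\bar z_1z_2$ into Gaussian integers of norm $\asymp N$. Applying the Gaussian divisor bound $\sum_{|r|\ll N}\tau_{\mathbb{Z}[i]}(r+iD)\ll N$ (with the primitivity, parity and $\R$-restrictions absorbing the usual logarithmic factor) gives the claim.

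For \eqref{eqn:lemZaD2}, expand the square to rewrite the LHS as a count of four-tuples $(z_1,z_2,z_3,z_4)\in(\U_1\times\U_2)^2$ with $\Delta(z_1,z_2)=\Delta(z_3,z_4)=D$ and $z_1z_4\equiv z_2z_3\modd{D}$, using that the common $a$ condition is equivalent to this cross-multiplication congruence. Setting $W=z_1z_4-z_2z_3\in D\,\mathbb{Z}[i]$ gives $|W|\ll N$, hence $O((N/D)^2\tau(D)^{O(1)})$ admissible values of $W$; for each $W$, an iterated divisor argument analogous to that of \eqref{eqn:lemZaD0} completes the count. This last step is the main obstacle: the bilinear condition on the four-tuple must be dissected carefully so that the savings appear only as $\tau(D)^6(\log x)^3$, rather than as a power of $N$ or $D$.
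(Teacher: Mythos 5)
There are gaps in all three parts. For \eqref{eqn:lemZaD0}, the cap $d\le\sqrt{2N}$ obtained from the ordinary divisor symmetry is too weak: $\U_2$ has diameter only $\asymp\omega\sqrt{N}$ with $\omega=(\log x)^{-L}$, so once $d\gg\omega\sqrt{N}$ the sublattice $\{z_2:d\mid\Delta(z_1,z_2)\}$ (covolume $d$, but containing the vector $(s_1,t_1)$ of length $\asymp\sqrt{N}\gg\omega\sqrt{N}$) no longer has density $\asymp 1/d$ inside $\U_2$. Your claimed boundary error $\asymp\omega\sqrt{N}$ per $z_1$ is not justified; in the worst case the count per $z_1$ can be $\gg\omega N/d$, and after summing over $z_1$ and $d$ this gives a contribution of order $\omega^3N^2\log N$, larger than the target $\omega^4N^2(\log x)^{16}$. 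The paper sidesteps this entirely by using Lemma \ref{lem:tau} with $k=2$ to restrict to $d\le(2N)^{1/4}\ll\omega\sqrt{N}$, where the four-dimensional residue count $\ll(\omega\sqrt{N}/d)^4\cdot d^3=\omega^4N^2/d$ is clean. For \eqref{eqn:lemZaD1}, the assertion $\sum_{|r|\ll N}\tau_{\mathbb{Z}[i]}(r+iD)\ll N$ is false: that sum is $\asymp N\log N$, and primitivity/parity restrictions do not delete the logarithm. (The true saving, if one pursues this route, is that only divisors of norm $\asymp N$ are counted, but that needs a separate argument which you do not give.) The paper avoids all of this: with $|x_2|$ maximal and $(x_2,y_2)=1$, the relation $x_1y_2-x_2y_1=D$ forces $x_1\equiv\bar{y}_2D\modd{|x_2|}$, so $x_1$ (and then $y_1$) is essentially determined by $(x_2,y_2)$, giving $O(N)$ directly.

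For \eqref{eqn:lemZaD2} you explicitly leave the crucial step open, and the obstacle you flag is real: the Gaussian-integer parametrization by $W=z_1z_4-z_2z_3\in D\mathbb{Z}[i]$ does not visibly produce $\tau(D)^6(\log x)^3\,N^2/D$. The idea you are missing is the paper's gcd argument on coordinates. Fixing which of $|x_1|,|x_2|,|y_1|,|y_2|$ is largest (say $|x_2|$), the congruence $x_1y_2\equiv D\modd{|x_2|}$ determines at most $2(x_1,x_2,D)$ values of $y_2$, while $a$ is determined by $x_1\equiv ax_2\modd{D}$. Hence $\sumstar_a\tZ(a,D)^2\ll\sum(x_1,x_2,D)(x_1',x_2',D)$ over quadruples with $x_1x_2'\equiv x_2x_1'\modd{D}$. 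Since $(x_1,x_2,D)(x_1',x_2',D)$ divides $(x_1x_2',x_2x_1',D^2)$, writing $m=x_1x_2'$, $n=x_2x_1'$ (each with multiplicity $\le\tau$) reduces the problem to the elementary estimate $\sum_{m\le 2N}\tau(m)^2(m,D^2)\ll N\tau(D)^6\log^3N$, which delivers the bound. Without this reduction the required power of $\tau(D)$ and the factor $1/D$ are out of reach.
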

\begin{proof}
We write $z_k = x_k + i y_k$ for $k=1, 2$, and assume without loss of
generality that $|x_2|$ is maximal among $|x_1|, |x_2|, |y_1|$ and
$|y_2|$.  For \eqref{eqn:lemZaD0} we apply Lemma \ref{lem:tau} with
$k=2$ to deduce that
\begin{eqnarray*}
\lefteqn{\sum_D\tau(D)\sumstar_{a \modd{D}} \tZ(a, D)}\\
& \ll&  
\sum_{d\le(2N)^{1/4}}\tau(d)^4\#\{(z_1,z_2)\in\U_1\times\U_2:
\,(x_1,y_1)=1,\,d\mid x_1y_2 - x_2 y_1\}.
\end{eqnarray*}
Since the regions $\U_i$ are contained in squares of side
$O(\omega\sqrt{N})$, and $N^{1/4}\ll\omega\sqrt{N}$ we deduce that
\begin{eqnarray*}
\lefteqn{\sum_D\tau(D)\sumstar_{a \modd{D}} \tZ(a, D)}\\
& \ll&  
\sum_{d\le(2N)^{1/4}}\tau(d)^4\frac{\omega^4N^2}{d^4}\#\{(z_1,z_2)\modd{d}:\,
(x_1,y_1,d)=1,\,d\mid x_1y_2 - x_2 y_1\}.
\end{eqnarray*}
One can easily show that if $d$ is a prime power $p^e$, then
\[\#\{(z_1,z_2)\modd{d}:\,(x_1,y_1,d)=1,\,d\mid x_1y_2 - x_2 y_1\}\le
d^3,\]
whence the same bound holds for all
$d$, and we obtain
\[\sum_D\tau(D)\sumstar_{a \modd{D}} \tZ(a, D) \ll  
\omega^4N^2\sum_{d\le(2N)^{1/4}}\frac{\tau(d)^4}{d}\ll
\omega^4N^2(\log x)^{16}\]
as required.

For \eqref{eqn:lemZaD1} we note that
the condition $\Delta=D$ implies $x_1y_2 - x_2 y_1 = D$, and since
$(x_2,y_2)=1$ we have $x_1\equiv \bar y_2 D\modd{|x_2|}$. However we
arranged that $|x_1|\le |x_2|$, so that there are at most 2
possibilities for $x_1$ once $x_2$ and $y_2$ are given. Since
$x_1,x_2$ and $y_2$ determine $y_1$ via the equation 
$x_1y_2 - x_2 y_1 = D$, the required bound $O(N)$ follows.

Finally, to prove \eqref{eqn:lemZaD2} we decompose $\tZ(a,D)$ into 4
parts according to which of $|x_1|$, $|x_2|$, $|y_1|$ and
$|y_2|$ is maximal.  Using Cauchy's inequality it then suffices to handle
the analogue of $\tZ(a,D)$ in which $|x_2|$, say, is largest. If
$(x_1,x_2,D)=k$, say, we see that the congruence $x_1y_2\equiv
D\modd{|x_2|}$ determines at most $2k$ values of $y_2$ with $|y_2|\le |x_2|$.
Now, note that if we have two solutions $x_1 \equiv a x_2 \modd{D}$
and $x_1' \equiv a x_2' \modd{D}$, then $x_1x_2' \equiv x_2x_1'
\modd{D}$.  Thus, we have 
\[\sumstar_{a \modd{D}} \tZ(a, D)^2\ll
\sum_{\substack{x_1,x_2,x_1',x_2'\\ x_1x_2'\equiv x_2x_1'\modd{D}}} 
(x_1,x_2,D)(x_1',x_2',D).\]
However $(x_1,x_2,D)(x_1',x_2',D)$ divides $(x_1x_2',x_2x_1',D^2)$.  Thus,
\begin{align*}
\sumstar_{a \modd{D}} \tZ(a, D)^2
&\ll \sum_{\substack{m, n \leq 2N\\ m \equiv n \modd{D}}} \tau(m)
\tau(n)(m,n,D^2) \\
&\le \frac 12 \sum_{\substack{m, n \leq 2N\\ m \equiv n \modd{D}}}
(\tau(m)^2(m,D^2) +  \tau(n)^2(n,D^2))\\ 
&\ll \frac{N}{D} \sum_{m \leq 2N} \tau(m)^2(m,D^2).
\end{align*}
Finally
\begin{align*}
\sum_{m \leq 2N} \tau(m)^2(m,D^2) 
&\le \sum_{d|D^2} d\sum_{\substack{m \leq 2N\\ d|m}} \tau(m)^2 \\
&\le \sum_{d|D^2} d\sum_{v \leq 2N/d} \tau(v)^2\tau(d)^2 \\
&\ll \sum_{d|D^2} d\{Nd^{-1}\log^3 N\}\tau(d)^2\\
&\ll N\tau(D^2)^3\log^3 N\\
&\ll N\tau(D)^6\log^3 N
\end{align*}
which suffices to prove \eqref{eqn:lemZaD2}.  
\end{proof}
\begin{rem}
Trivially $|Z(a, D)| \leq \tZ(a, D)$ so that Lemma
\ref{lem:ZbD} applies with $\tZ(a, D)$ replaced by $Z(a, D)$. 
\end{rem}

Now for an interval $J$ and any function $\tilde{h}$, let
\begin{align}\label{eqn:YAB}
Y(J,\tilde{h};D) = \sum_{\substack{q\in J\\ (q,D)=1}}\tilde{h}(q)
\end{align}and
\begin{align}\label{eqn:YjD1}
Y_{h_1, h_2}(D) = Y(D) &= \frac{1}{\phi(D)}Y(J_1, h_1;D)Y(J_2, h_2;D). \notag \\
\end{align}

Recall that $q_1$ and $q_2$ appearing in $Y(a, D)$ satisfy $(q_1q_2,
D) =1$.  If $h_1$ or $h_2$ is $g$, then $Y(D)$ is the expected value
of $Y(a, D)$. 

If $h_1 = h_2 = f$, note that $p_1^2 \equiv ap_2^2 \modd{D}$ implies
that $p_1 \equiv bp_2 \modd{D}$ for some $b$ such that
$a \equiv b^2 \modd{D}$.  Here, $Y(a, D) = 0$
if $a$ is not a square modulo $D$ so 
\begin{equation}
\sumstar_{a\modd{D}} Y(a, D)Z(a, D) = \sumstar_{b \modd{D}} Y_f(b, D)Z(b^2, D), 
\end{equation}
where
\begin{equation}
Y_f(b, D) = \sum_{\substack{p_1^2\in J_1\\p_2^2 \in J_2\\ p_1
    \equiv b p_2 \modd{D} \\(p_1p_2, D) = 1}} f(p_1^2)f(p_2^2).
\end{equation}
When $h_1
= h_2 = f$, $Y(D)$ is the expected value of $Y_f(b, D)$.

The following proposition makes the above discussion precise.

\begin{prop}\label{prop:EMN}
If either $h_1 = g$ or $h_2=g$, let
\begin{equation}
\E(N) = \sum_{D \leq 2N} \sumstar_{a \modd{D}} 
\left|Y(a,D;h_1,h_2) - Y_{h_1,h_2}(D) \right|\tZ(a, D).
\end{equation}Then for any constant $C>0$, 
\begin{equation}\label{eqn:Ebound}
\E(N) \ll_C \frac{xN}{\log^C x}.
\end{equation}

For $h_1 = h_2 = f$, let
\begin{equation}
\E_f(N) = \sum_{D \leq 2N} \;\sumstar_{b \modd{D}} \left|Y_f(b, D) -
Y_{f,f}(D)\right|\tZ(b^2, D). 
\end{equation}Then for any constant $C>0$, 
\begin{equation}
\E_f(N) \ll_C \frac{xN}{\log^C x}.
\end{equation}
\end{prop}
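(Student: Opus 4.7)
The natural approach is a Cauchy--Schwarz argument combining the Barban--Davenport--Halberstam style bound in Corollary \ref{cor:SWsequence} with the moment estimates for $\tZ$ provided by Lemma \ref{lem:ZbD}. First I would verify that the relevant weights satisfy the Siegel--Walfisz hypothesis \eqref{eqn:SW1}: for $g$ this is immediate from the Siegel--Walfisz theorem for primes in the short interval $J_2$, while for $f$ it follows by partial summation from the same theorem, the weight $2p\log p$ supported on squares $p^2\in J_1$ amounting merely to a smooth $\sqrt{q}$ factor on the prime variable.

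Next, apply Cauchy--Schwarz in the form
\[
\E(N)^2\le\Big(\sum_{D\le 2N}\sumstar_{a\modd D}\tZ(a,D)^2\Big)\Big(\sum_{D\le 2N}\sumstar_{a\modd D}|Y(a,D;h_1,h_2)-Y_{h_1,h_2}(D)|^2\Big).
\]
For the first factor, Lemma \ref{lem:ZbD}(3) gives $\sumstar_a\tZ(a,D)^2\ll (\log x)^3 \tau(D)^6 N^2/D$, and $\tZ(a,D)$ vanishes unless $D=\Delta(z_1,z_2)$ lies in an interval of length $O(\omega N)$ determined by the geometry of $\U_1\times\U_2$; this yields $\sum_{D,a}\tZ(a,D)^2\ll \omega N^2(\log x)^{O(1)}$. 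For the second factor I would invoke Corollary \ref{cor:SWsequence} with $c_i=h_i\mathbf{1}_{J_i}$, taking the corollary's parameter to be $\sup (J_1\cup J_2)\asymp X\le\sqrt x$; the range condition $2N\le X^2(\log X)^{-B}$ is more than satisfied since $N\le x^{1/2-\de}$. This produces a bound of the shape $X^2(\log X)^{-A_0}\,\|c_1\tau\|^2\|c_2\tau\|^2$ with $A_0$ arbitrary, each norm being evaluated using the prime number theorem applied to the interval $J_i$.

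For the $\E_f$ statement the argument is parallel: after the substitution $a\equiv b^2\modd D$ the congruence becomes the linear relation $p_1\equiv b p_2\modd D$, so the two weights $c_i(p)=2p(\log p)\mathbf{1}_{p\in J_i^*}$ both live on primes and both satisfy Siegel--Walfisz, and Corollary \ref{cor:SWsequence} applies directly. The moment estimates for $\tZ(b^2,D)$ follow from Lemma \ref{lem:ZbD} at the cost of an extra factor of $\tau(D)\ll(\log x)^{O(1)}$, since each square residue mod $D$ has at most $2^{\omega(D)}$ square roots.

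The main technical obstacle, particularly in the mixed case $h_1=f$, $h_2=g$, is to ensure that the polynomial losses coming from the factors $\|c_i\tau\|^2$ (which grow like $\omega X^{3/2}\log X$ when $h_i=f$) are absorbed by the $(\log X)^{-A_0}$ saving of the corollary together with the extra $\omega$-gain from the short-range constraint on $D$. This is exactly where the fact that $N\le x^{1/2-\de}$ sits comfortably below $X^2\asymp x$ is essential: it provides enough slack in the hypothesis of Corollary \ref{cor:SWsequence} that $A_0$ may be chosen as a suitable function of $C$, $L$ and $\de$, ensuring the uniform bounds $\E(N),\E_f(N)\ll_C xN(\log x)^{-C}$ as claimed.
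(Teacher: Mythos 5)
Your overall strategy (Cauchy--Schwarz plus a BDH-type variance bound, with the $b$-substitution for $h_1=h_2=f$ to linearise the congruence $p_1^2\equiv a p_2^2$) is the right family of ideas, and it does work for the cases $h_1=h_2=g$ and $h_1=h_2=f$. But there is a genuine numerical gap in the mixed case $\{h_1,h_2\}=\{f,g\}$, which is part of the $\E(N)$ statement.

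The issue is that your Cauchy--Schwarz is performed only in the variables $(D,a)$, i.e.
\[
\E(N)^2\le\Bigl(\sum_{D,a}\tZ(a,D)^2\Bigr)\Bigl(\sum_{D,a}\bigl|Y(a,D;h_1,h_2)-Y_{h_1,h_2}(D)\bigr|^2\Bigr),
\]
and then you estimate the second factor by Corollary \ref{cor:SWsequence} with the parameter ``$x$'' equal to $\sup(J_1\cup J_2)\asymp X$. That corollary delivers a bound of shape $\frac{X^2}{(\log X)^{A_0}}\|c_1\tau\|^2\|c_2\tau\|^2$, and for $c_1=g\mathbf 1_{J_1}$, $c_2=f\mathbf 1_{J_2}$ one has $\|c_1\tau\|^2\asymp\omega X\log X$ and $\|c_2\tau\|^2\asymp\omega X^{3/2}\log X$, so the second factor is $\ll\omega^2 X^{9/2}(\log X)^{2-A_0}\asymp\omega^2 x^{9/4}(\log x)^{2-A_0}$. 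Even granting your first factor $\ll\omega N^2(\log x)^{O(1)}$ (which, incidentally, requires a Shiu-type bound for $\sum_{D\in I}\tau(D)^6$ over the short interval $I$ --- an extra point that should be noted), the product is $\ll\omega^3 N^2 x^{9/4}(\log x)^{O(1)-A_0}$, whereas you need $\ll x^2N^2(\log x)^{-2C}$. The ratio is $\omega^3 x^{1/4}(\log x)^{O(1)-A_0}$, which is $\gg 1$ no matter how large $L$ or $A_0$ are, since $\omega$ is only a power of $\log x$ and $x^{1/4}$ is a genuine power. In other words, in the mixed case Corollary \ref{cor:SWsequence} is off from the true variance by a polynomial factor ($\approx X^2/N\asymp x^{1/2+\delta}$), precisely because the second argument $c_2=f\mathbf 1_{J_2}$ is supported on the very sparse set of prime squares, and that polynomial loss cannot be absorbed by the factors $\omega$ and $(\log X)^{-A_0}$.

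The paper avoids this by using a weighted Cauchy--Schwarz that factors the $f$-weight out of the congruence sum: one bounds $\E(N)\le\E_1^{1/2}\E_2^{1/2}$ with
\[
\E_1=\sum_{D}Y(J_2,f;D)\sumstar_a\tZ(a,D)^2,\qquad
\E_2=\sum_D\sum_{q_2\in J_2}f(q_2)\sumstar_a\Bigl(\sum_{\substack{q_1\in J_1\\ q_1\equiv aq_2\modd D}}g(q_1)-\frac{Y(J_1,g;D)}{\phi(D)}\Bigr)^2,
\]
so that after the change of variable $b=aq_2$ the inner object in $\E_2$ is a \emph{one-variable} prime sum, and the ordinary Barban--Davenport--Halberstam theorem gives the bound with the correct normalisation $\ll NX(\log x)^{-C}$ rather than $\ll X^2(\log x)^{-C}$. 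That is the extra factor $N/X^2$ that your version is missing. Your direct Cauchy--Schwarz plus the bilinear Corollary \ref{cor:SWsequence} is therefore too crude in the mixed case, and the proof does not close there.

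A minor secondary point: your justification that $f\mathbf 1_{J}$ satisfies the Siegel--Walfisz condition (``a smooth $\sqrt q$ factor'') is misleading, since for a real non-principal $\chi$ the sum $\sum_{p^2\in J}\chi(p^2)\,2p\log p=\sum\chi_0(p)\,2p\log p$ exhibits no cancellation at all; the condition \eqref{eqn:SW1} is in fact satisfied only because the right-hand side contains the very large $\|c\|$ factor coming from the sparse support, not because of genuine oscillation. This does not break your argument --- the corollary only needs one of $c_1,c_2$ to satisfy Siegel--Walfisz, and for $\E_f$ you correctly switch to the $b$-variable --- but the reasoning as stated is incorrect.
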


\begin{proof}
We first prove the bound \eqref{eqn:Ebound} for $\E$.  We prove the
result for $h_1 = g$ and $h_2 = f$.  The proof is similar for the
cases $h_1 = g$ and $h_2 = f$ and $h_1 = h_2 = g$.  We write  
\begin{align*}
\E(N) \leq \sum_{D \leq 2N} \sum_{\substack{q_2 \in J_2\\(q_2, D)
    =1}}f(q_2) \sumstar_{a \modd{D}}  \left| \sum_{\substack{q_1\in
      J_1\\q_1 \equiv aq_2 \modd{D}}}g(q_1) - \frac{Y(J_1, g;D)}{\phi(D)}
\right| \tZ(a, D). 
\end{align*}

By Cauchy--Schwarz and Lemma \ref{lem:ZbD}, we have that
$\E(N)\le\E_1^{1/2}\E_2^{1/2}$, where
\begin{eqnarray*}
\E_1&=&\sum_{D \leq 2N} Y(J_2,f;D)\sumstar_{a \modd{D}}\tZ(a, D)^2\\ 
&\ll &\sum_{D \leq 2N} Y(J_2,f;D)(\log x)^{3} \frac{N^2}{D}\tau(D)^6,
\end{eqnarray*}
and
\begin{eqnarray*}
\E_2&=&\sum_{D \leq 2N} \sum_{\substack{q_2 \in J_2\\(q_2,
      D)=1}} f(q_2) \sumstar_{a \modd{D}}  \left(
    \sum_{\substack{q_1\in J_1\\q_1 \equiv aq_2 \modd{D}}}g(q_1) -
    \frac{Y(J_1, g;D)}{\phi(D)} \right)^2\\
&=&\sum_{D \leq 2N} \sum_{\substack{q_2 \in J_2\\(q_2, D)=1}}
  f(q_2) \sumstar_{b \modd{D}}  \left( \sum_{\substack{q_1\in J_1\\q_1
        \equiv b \modd{D}}}g(q_1) - \frac{Y(J_1, g;D)}{\phi(D)}\right)^2.
\end{eqnarray*}
We have $Y(J_2,f,D)\ll X\ll x^{1/2}$ and
\[\sum_{D\le 2N}D^{-1}\tau(D)^6\ll (\log x)^{64},\]
whence $\E_1\ll N^2x^{1/2}(\log x)^{67}$.  Moreover the bound
$Y(J_2,f,D)\ll X\ll x^{1/2}$ shows that
\[\E_2\ll x^{1/2}\sum_{D \leq 2N}
 \sumstar_{b \modd{D}}  \left( \sum_{\substack{q_1\in J_1\\q_1
        \equiv b \modd{D}}}g(q_1) - \frac{Y(J_1, g;D)}{\phi(D)}\right)^2,\]
whence it suffices to show that
\begin{equation}
\sum_{D \leq 2N} \sumstar_{b \modd{D}}  \left( \sum_{\substack{q_1\in
      J_1\\q_1 \equiv b \modd{D}}}g(q_1) - \frac{Y(J_1,
    g; D)}{\phi(D)}\right)^2 \ll_C \frac{x}{\log^C x}, 
\end{equation}for any $C>0$.  This follows by the
Barban--Davenport--Halberstam theorem (see, e.g. Theorem 9.14 in
\cite{FIO}), and noting that $\sqrt{x} \geq X \geq \sqrt{x}/\log^4 x$
while $N \leq x^{1/2 - \delta}$. 

It is necessary to use Corollary \ref{cor:SWsequence} in the bound for
$\E_f$.  Here, Cauchy--Schwarz on $\E_f$ produces
$\E_f\le\E_1^{1/2}\E_2^{1/2}$, where now
\[\E_1=\sum_{D\le N}   \sumstar_b \tZ(b^2, D)^2 ;\;\;\mbox{and}
\;\;\;
\E_2=\sum_{D \leq 2N} \sumstar_{b \modd{D}} (Y_f(b, D) - Y_{f,f}(D))^2.\]
For $\E_1$ we note that any residue class $a$ coprime to $D$ arises
$O(\tau(D))$ times as a square, whence
\begin{eqnarray*}
\E_1&\ll & \sum_{D\le N} \tau(D) \sumstar_a \tZ(a, D)^2\\
&\ll& \sum_{D\le N} \tau(D)^7 \frac{N^2}{D} \log^3 x \\
&\ll &N^2\log^{131} x,
\end{eqnarray*}
by Lemma \ref{lem:ZbD}.
Thus, it remains to show that $\E_2\ll x^2(\log x)^{-C}$ for any
constant $C$.  But $\E_2$ is simply
\[\sum_{D \leq 2N} \sumstar_{b \modd{D}} \left|\sum_{\substack{m, n\\ m
      \equiv bn \modd{D}\\ (mn, D)=1}}c_1(m)c_2(n) - \frac{1}{\phi(D)}
  Y(J_1, f;D) Y(J_2, f;D)\right|^2 \]
where 
\begin{equation}
c_1(m) = 
\begin{cases}
2p\log p &\textup{ if $m=p$ is prime and } p^2 \in J_1, \\
0 &\textup{ otherwise,}
\end{cases}
\end{equation}and similarly
\begin{equation}
c_2(n) = 
\begin{cases}
2p\log p &\textup{ if $n=p$ is prime and } p^2 \in J_2, \\
0 &\textup{ otherwise.}
\end{cases}
\end{equation}

Since $c_1$ satisfies the Siegel--Walfisz condition, we now
apply Corollary \ref{cor:SWsequence} to complete the proof.  Note that
\begin{equation}
\|c_i\tau\|^2  \ll x^{3/4} \log^2 x,
\end{equation}
for $i=1$ or 2, and that $c_i(n)$ is supported on $n \leq x^{1/4}$ (so that
the value of $x$ appearing in the statement of Corollary
\ref{cor:SWsequence} is $x^{1/4}$ in this application).   

\end{proof}

\section{Proof of Proposition \ref{prop:bilinear3b}}
\label{section:bilinearpf3b}

We begin by observing that Lemma \ref{lem:pcoprimeD} yields
\[T'(\U_1, \U_2, J_1, J_2)\ll
\sum_{D\leq 2N} \sumstar_{a\modd{D}} Y(a,D;f_1,f_2)\tZ(a, D)
+N^2\sqrt{x}(\log x)^3\]
for some pair of functions $f_1,f_2=f$or $g$.  We consider the case in
which $f_1=f_2=f$, the others being similar, or easier.  Since
\[\sum_{D\leq 2N} \sumstar_{a\modd{D}} Y(a,D;f,f)\tZ(a, D)=
\sum_{D\leq 2N} \sumstar_{b\modd{D}} Y_f(b,D)\tZ(b^2, D)\]
it follows from Proposition \ref{prop:EMN} that
\begin{align}\label{eqn:T'b}
T'(\U_1, \U_2, J_1, J_2)&\ll
\sum_{D\leq 2N} Y_{f,f}(D)\sumstar_{b\modd{D}}\tZ(b^2,D)+\frac{xN}{\log^C x}.
\end{align}
This holds for any $C>0$, and since there are
$O((\log x)^{6L})$ possible regions $\U_1\times\U_2\times
J_1\times J_2$, we see that the final term contributes $O(xN(\log
x)^{6L-C})$ in Proposition \ref{prop:bilinear3b}.  This is
satisfactory on taking $C\ge A+6L$.

To handle the main terms we note 
\begin{align}\label{eqn:YJ}
Y(J_i,f) = |J_i| + O(\sqrt{x}\exp(-\sqrt{\log x}))
\end{align}
by the Prime Number
Theorem. It follows that $Y(J_i,f)\ll |J_i|\ll \omega X$, 
and hence that 
$Y_{f,f}(D)\ll\omega^2X^2/\phi(D)$.  The main
terms in \eqref{eqn:T'b} are thus
\[\ll\omega^2X^2\sum_{D\leq 2N} \frac{\tau(D)}{\phi(D)}\sumstar_{a\modd{D}}
\tZ(a, D),\]
since any residue class $a$ coprime to $D$ arises $O(\tau(D))$ times
as a square.

Hence, to establish Proposition \ref{prop:bilinear3b} it will suffice
to show that
\begin{align}\label{eqn:E1}
\E &:= \omega^2X^2\sum_{\substack{\U_1,\U_2,J_1,J_2\\ 
\Cf_2(\U_1, \U_2, J_1,J_2) \textup{ or }\Cf_3(\U_1, \U_2, J_1,J_2)}}\sum_{D\leq 2N} \frac{\tau(D)}{\phi(D)} 
\sumstar_{a\modd{D}} \tZ(a, D) \\
&\ll \frac{xN}{\log^{A} x}
\end{align}  
when $L\ge A+6$.
For this we use the following two lemmas.

\begin{lem}\label{lem:UDJcount}
For fixed $\U_1, \U_2$, $D$ and $J_1$, the number of choices for $J_2$
subject to the condition $\Cf_2(\U_1, \U_2, J_1, J_2)$ is $\ll 1$. 
\end{lem}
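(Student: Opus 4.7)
The plan is to reduce the lemma to an oscillation bound for an explicit function on the (effective) parameter space, and then to control that oscillation using both the constraint $\Delta=D$ and the witness tuple implicit in $\Cf_2$.

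For fixed $(z_1,z_2,q_1)$ with $\Delta(z_1,z_2)=D$ and $q_1\in J_1$, the condition \eqref{eqn:wcond} is a quadratic inequality in $q_2$ whose solution set is an interval $[f_-,f_+]$. A direct computation, using $\tReb(\bar z_1 z_2)=|z_1||z_2|\cos\theta$ and $\Delta=|z_1||z_2|\sin\theta$, yields the clean form
\[
f_\pm(z_1,z_2,q_1)=|z_2|\sqrt{x/N}\,\cos(\theta\mp\phi),
\]
where $\theta=\arg z_2-\arg z_1$ and $\cos\phi=q_1/(|z_1|\sqrt{x/N})$, $\phi\in[0,\pi/2]$. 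Let $E$ and $F$ be the union and intersection of the intervals $[f_-,f_+]$ as $(z_1,z_2,q_1)$ ranges over $\U_1\times\U_2\times J_1$ restricted to $\Delta(z_1,z_2)=D$. By continuity of $f_\pm$ on this connected set, $E=[\min f_-,\max f_+]$ and $F=[\max f_-,\min f_+]$ are intervals. The condition $\Cf_2(\U_1,\U_2,J_1,J_2)$ is precisely that $J_2\cap E\ne\emptyset$ and $J_2\not\subseteq F$, so $J_2$ must overlap the fringe $E\setminus F=[\min f_-,\max f_-]\cup[\min f_+,\max f_+]$. Since the $J_2$ tile $I$ in disjoint intervals of length $\asymp\omega X$, the lemma reduces to showing that each of $\max f_+-\min f_+$ and $\max f_--\min f_-$ is $\ll\omega X$.

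On the constraint $|z_1||z_2|\sin\theta=D$ the tangent variations satisfy $|d\theta|\ll\omega|\tan\theta|$, while the definition of $\phi$ gives $|d\phi|\ll\omega\cot\phi$. Differentiating the formula for $f_\pm$ then produces
\[
|df_\pm|\ll \omega\sqrt{x}\,\bigl(|\cos(\theta\mp\phi)|+|\sin(\theta\mp\phi)|(\tan\theta+\cot\phi)\bigr).
\]
For any $J_2$ satisfying $\Cf_2$ there is a witness $(z_1^\ast,z_2^\ast,q_1^\ast,q_2^\ast)$ satisfying \eqref{eqn:wcond} with $q_2^\ast\in J_2$. This witness, together with $q_1\in J_1\subseteq I$ and $\Delta=D$, pins down -- up to a $1+O(\omega)$ factor that propagates through the box since each quantity varies multiplicatively by $1+O(\omega)$ -- the three relations $\cos\phi\asymp X/\sqrt{x}$, $\sin\theta\asymp D/N$, and $\cos(\theta\mp\phi)\asymp X/\sqrt{x}$. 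Substituting these into the identity
\[
\cos(\theta\mp\phi)=\cos\theta\cos\phi\pm\sin\theta\sin\phi
\]
extracts, in particular, $|\sin\theta\sin\phi|\ll X/\sqrt{x}$, which is the structural input needed to beat the dangerous $\tan\theta$ and $\cot\phi$ factors.

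The main technical obstacle is carrying out the resulting case analysis uniformly. In the bulk regime $X\ll\sqrt{x}$ one has $\sin\phi\asymp 1$ so $\cot\phi\ll X/\sqrt{x}$, and the witness forces $D/N\ll X/\sqrt{x}$ hence $\tan\theta\ll X/\sqrt{x}$, giving $|df_\pm|\ll\omega X$ immediately. The delicate case is the corner regime $X\asymp\sqrt{x}$ with $\phi$ close to $0$ (so $\cot\phi$ is potentially large); here the extracted relation $|\sin\theta\sin\phi|\ll X/\sqrt{x}$ combined with $\sin\theta\asymp D/N$ forces $\sin\phi\gtrsim D/N$, hence $\cot\phi\ll N/D$, and expanding $\sin(\theta\mp\phi)=\sin\theta\cos\phi\mp\cos\theta\sin\phi$ produces a cancellation that keeps the product $|\sin(\theta\mp\phi)|\cot\phi$ bounded by $X/\sqrt{x}$. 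Combining the two cases gives $|df_\pm|\ll\omega X$ throughout, hence $\max f_\pm-\min f_\pm\ll\omega X$, and the lemma follows.
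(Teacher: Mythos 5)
Your explicit computation of the endpoints $f_\pm(z_1,z_2,q_1)=|z_2|\sqrt{x/N}\cos(\theta\mp\phi)$ is correct and pleasant, and your overall plan (show the "boundary band" of admissible $q_2$ has width $O(\omega X)$, so only $O(1)$ tiles $J_2$ can meet it) is in the same spirit as the paper's, which instead compares directly with the representative tuple $(Z_1,Z_2,X_1,X_2)$ and concludes from $|X_2-X_1Z_2/Z_1|=\frac{D}{|Z_1|}\sqrt{x/N}+O(\omega X)$. However your argument has two concrete problems that leave the key oscillation bound $\max f_\pm-\min f_\pm\ll\omega X$ unproved.

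First, you restrict $(z_1,z_2)$ to the slice $\Delta(z_1,z_2)=D$ inside $\U_1\times\U_2$. The condition $\Cf_2$ (and hence the statement of the lemma) makes no such restriction: it quantifies over all $(z_1,z_2,q_1,q_2)\in\U_1\times\U_2\times J_1\times J_2$, with the threshold in \eqref{eqn:wcond} being the varying quantity $\Delta(z_1,z_2)\sqrt{x/N}$, not a fixed $D\sqrt{x/N}$. Indeed your own derivation of $f_\pm$ shows the dependence on $\Delta$ cancels, so the formula holds without the restriction --- but the variational bound $|d\theta|\ll\omega|\tan\theta|$ does rely on $\Delta=D$; without it one only has $|d\theta|\ll\omega$, which is strictly weaker and feeds through to $|df_\pm|\ll\omega\sqrt{x}$ rather than $\ll\omega X$ in general.

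Second, the corner-regime argument contains an inversion of an inequality. From $|\sin\theta\sin\phi|\ll X/\sqrt{x}$ and $\sin\theta\asymp D/N$ one deduces $\sin\phi\ll (N/D)(X/\sqrt{x})$, i.e.\ an \emph{upper} bound on $\sin\phi$. Your text asserts $\sin\phi\gtrsim D/N$, a lower bound, which simply does not follow; and it is exactly a lower bound on $\sin\phi$ that is needed to control $\cot\phi$. The subsequent claim of a "cancellation that keeps $|\sin(\theta\mp\phi)|\cot\phi$ bounded by $X/\sqrt{x}$" is asserted rather than proved, and in fact the danger you are trying to defuse (small $\sin\phi$, i.e.\ $q_1$ near the edge $|z_1|\sqrt{x/N}$, which occurs when $X$ is near $\sqrt{x}$ and $N(z_1)$ near $N$) is exactly the degenerate tangency configuration and cannot be dispatched this way. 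In short, the reduction and the formula for $f_\pm$ are sound, but the estimate that is supposed to close the argument is not established.
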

\begin{proof}
Let $X_i = \inf J_i$, and fix $Z_i \in \U_i$. Then $\Cf_2(\U_1, \U_2,
J_1, J_2)$ implies that there exists $q_i \in J_i$ and $z_i \in \U_i$
such that 
\begin{equation}
D\sqrt{\frac x N} \geq |q_1z_2 - q_2z_1| = |X_1Z_2 - X_2Z_1| +
O(\omega X\sqrt{N}), 
\end{equation}and that there exists $q_i' \in J_i$ and $z_i' \in \U_i$ such that
\begin{equation}
D\sqrt{\frac x N} < |q_1'z_2' - q_2'z_1'| = |X_1Z_2 - X_2Z_1| +
O(\omega X\sqrt{N}). 
\end{equation}
Then 
\begin{equation}
|X_1Z_2 - X_2Z_1| = D\sqrt{\frac x N} +  O(\omega X\sqrt{N}),
\end{equation}and since $|Z_1| \gg \sqrt{N}$ we have
\begin{equation}
|X_2 - \frac{X_1Z_2}{Z_1}| = \frac{D}{|Z_1|}\sqrt{\frac x N} +  
O(\omega X).
\end{equation}
Since the different values for $X_2$ increase in steps of length 
$\asymp \omega X$ it follows that $\U_1, \U_2, D$ and $J_1$ determine
$O(1)$ choices for $J_2$.
\end{proof} 

\begin{lem}\label{lem:UDJcount2}
For fixed $\U_1, \U_2$, $D$ and $J_1$, the number of choices for $J_2$
subject to the condition $\Cf_3(\U_1, \U_2, J_1, J_2)$ is $\ll
\omega^{-1}(\log x)^{-A-2}.$ 
\end{lem}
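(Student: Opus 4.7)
\medskip

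\noindent\textbf{Proof plan for Lemma \ref{lem:UDJcount2}.}
The plan is to follow the same geometric strategy as in Lemma \ref{lem:UDJcount}, but now exploit the fact that under $\Cf_3$ the quantity $\Delta(z_1,z_2)$ is small throughout $\U_1 \times \U_2$, which in turn forces $J_2$ to sit inside a very short real interval determined by $\U_1,\U_2,D,J_1$. First I would note that $\Delta(z_1,z_2)=x_1y_2-y_1x_2$ is Lipschitz of size $O(\sqrt{N})$ in each coordinate, so as $(z_1,z_2)$ varies over $\U_1\times\U_2$, $\Delta$ varies by at most $O(\omega N)$. Under $\Cf_3$ there is some tuple with $\Delta \le \Df_0 = N(\log x)^{-A-6}$, and the standing hypothesis $L \ge A+6$ gives $\omega N \le \Df_0$, so in fact $\Delta(z_1,z_2)\ll \Df_0$ for every $(z_1,z_2)\in\U_1\times\U_2$. (This is the key difference with Lemma \ref{lem:UDJcount} and is the main conceptual step, but it is essentially automatic from the choice of $L$.)

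Next, pick any $Z_i \in \U_i$ and set $X_1 = \inf J_1$. Since under $\Cf_3$ every tuple satisfies \eqref{eqn:wcond}, I would apply it with $(z_1,z_2,q_1,q_2) = (Z_1,Z_2,X_1,q_2)$ for an arbitrary $q_2 \in J_2$ to obtain
\[ |X_1 Z_2 - q_2 Z_1| \le \Delta(Z_1,Z_2)\sqrt{x/N} \ll \Df_0\sqrt{x/N}. \]
Dividing by $|Z_1|\asymp \sqrt{N}$ and taking real parts (using $q_2\in\mathbb{R}$) yields
\[ \bigl|q_2 - \tReb(X_1 Z_2/Z_1)\bigr| \ll \Df_0\sqrt{x}/N \ll \sqrt{x}(\log x)^{-A-6}. \]
Hence every element of every admissible $J_2$ lies in one and the same real interval of length $O(\sqrt{x}(\log x)^{-A-6})$ centred at $\tReb(X_1 Z_2/Z_1)$.

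Finally, since the intervals $J_2$ are disjoint of common length $\asymp \omega X \asymp \omega\sqrt{x}$, the number that meet an interval of length $O(\sqrt{x}(\log x)^{-A-6})$ is
\[ \ll \frac{\sqrt{x}(\log x)^{-A-6}}{\omega\sqrt{x}}+1 \ll \omega^{-1}(\log x)^{-A-6} + 1 \ll \omega^{-1}(\log x)^{-A-2}, \]
which gives the desired bound. The only step requiring a bit of care is the first one, comparing $\Delta$ at different points of $\U_1\times\U_2$; once this is in hand the rest of the argument is a direct copy of Lemma \ref{lem:UDJcount}, with $D$ replaced by the a priori bound $\Df_0$. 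I do not expect any serious obstacle: the proof is essentially an exercise in tracking the size of $\Delta$ and of the resulting window for $q_2$.
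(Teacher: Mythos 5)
Your proof follows the same geometric strategy as the paper's: use $\Cf_3$ and the Lipschitz estimate $\Delta(z_1,z_2)=\Delta(Z_1,Z_2)+O(\omega N)$ to propagate the bound $\Delta\ll\Df_0$ to all of $\U_1\times\U_2$, then feed this into \eqref{eqn:wcond} to pin $q_2$ (or $X_2=\inf J_2$, as the paper does) to a window of length $O(\sqrt{x}(\log x)^{-A-6})$, and count disjoint intervals $J_2$ that meet it. This is essentially identical to the paper's argument.

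One small imprecision worth flagging: you assert that the intervals $J_2$ have length $\asymp\omega X\asymp\omega\sqrt{x}$, but $X$ can be as small as $\sqrt{x}/\log^4 x$, so in fact $|J_2|\asymp\omega X\gg\omega\sqrt{x}(\log x)^{-4}$. With the correct lower bound on $|J_2|$ the count comes out directly as $\ll\omega^{-1}(\log x)^{-A-2}$, which is exactly where the $-A-2$ exponent in the lemma comes from. Your calculation gives the intermediate $\omega^{-1}(\log x)^{-A-6}+1$ and then relaxes it, so the final answer happens to agree, but only because you loosened the estimate by a factor of $(\log x)^4$ that coincidentally compensates for the slack in $X\asymp\sqrt{x}$. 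Apart from this bookkeeping slip, the argument is sound and matches the paper.
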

\begin{proof}
Let $X_i = \inf J_i$, and fix $Z_i \in \U_i$.  For all $(z_1, z_2) \in \U_1\times \U_2$, we have that $\Delta(z_1, z_2) = \Delta(Z_1, Z_2) + O(\omega N)$.  Then
\begin{equation}
|X_1Z_2 - X_2 Z_1| \ll (\Df_0 + \omega N) \sqrt{\frac{x}{N}},
\end{equation}and since $|Z_1| \gg \sqrt{N}$ and $\omega N \ll 
\Df_0$ for $L\ge A+6$ this yields
\begin{equation}
X_2 = X_1 \frac{Z_2}{Z_1} + O\left(\frac{\Df_0}{N} \sqrt{x}\right).
\end{equation}This tells us that $X_2$ is restricted to be in an
interval of length $\ll \sqrt{x}(\log x)^{-A-6}$.  Since $J_2$ is
of length $\omega_1 X \gg \omega \sqrt{x}(\log x)^{-4}$, this gives
$O(\omega^{-1}(\log x)^{-A-2})$ choices for $J_2$. 
\end{proof}

By Lemma \ref{lem:UDJcount}, Lemma \ref{lem:UDJcount2} and \eqref{eqn:E1},
\begin{eqnarray*}
\E &\ll & \left(\omega + (\log x)^{-A-2}\right)
X^2\sum_{\U_1,\U_2}\sum_{D\leq 2N} \frac{\tau(D)}{\phi(D)}  
\sumstar_{a\modd{D}} \tZ(a, D) \\
&\ll&  \left(\omega + (\log x)^{-A-2}\right) X^2 N\sum_{D\leq 2N}
\frac{\tau(D)}{\phi(D)},  
\end{eqnarray*}where we have used Lemma \ref{lem:ZbD} for the last
line.  Hence if $L\ge A+6$ we obtain
\begin{eqnarray*}
\E \ll \left(\omega +(\log x)^{-A-2}\right)  X^2N(\log x)^2 \ll
\frac{xN}{\log^{L-2} x} + \frac{xN}{\log^A x}\ll \frac{xN}{\log^A x}, 
\end{eqnarray*}
as required.

\section{Proof of Proposition \ref{prop:bilinear3a}; 
Further  Manoeuvres} 
\label{section:bilinearpf3a}

Supposing that one of the functions $h_1$ and $h_2$ is $g$, we have according to Proposition \ref{prop:EMN} that
\begin{equation}
\sum_{D\leq 2N} \sumstar_{a\modd{D}} Y(a,D;h_1,h_2)Z(a, D) = 
\sum_{D\leq 2N} \sumstar_{a\modd{D}} Y_{h_1,h_2}(D)Z(a, D) + O_C\bfrac{xN}{\log^C x},
\end{equation}
for any $C>0$.  In the remaining case $h_1=h_2 = f$, we have
\begin{equation}
\sum_{D\leq 2N} \sumstar_{a\modd{D}} Y(a,D;f,f)Z(a, D) = 
\sum_{D\leq 2N} \sumstar_{b\modd{D}} Y_{f,f}(D)Z(b^2,D)+ O_C\bfrac{xN}{\log^C x}.
\end{equation}
From \eqref{eqn:YJ} we see that we may replace $Y_{h_1,h_2}(D)$ by
$|J_1||J_2|/\phi(D)$ in each case, with a total error
\begin{eqnarray*}
&\ll & x\exp(-\sqrt{\log x}) \sum_{\U_1,\U_2}\sum_{D\leq 2N}
\frac{1}{\phi(D)}\left(\sumstar_{a\modd{D}} |Z(a, D)| 
+\sumstar_{b\modd{D}} |Z(b^2, D)| \right)\\
&\ll& x\exp(-\sqrt{\log x})\sum_{D\leq 2N}
\frac{\tau(D)}{\phi(D)}\sum_{\U_1,\U_2}\sumstar_{a\modd{D}} \tZ(a, D) \\
&\ll& x\exp(-\sqrt{\log x}) \sum_{D\leq 2N} \frac{\tau(D)}{\phi(D)}N \\
&\ll & x\exp(-\sqrt{\log x})N\log x,
\end{eqnarray*}
by Lemma \ref{lem:ZbD}.  This is
satisfactory for the proposition.

It therefore remains to show that
\[|J_1||J_2|\sum_{\substack{\U_1, \U_2\\ \Cf_1(\U_1, \U_2, J_1, J_2)}} \sum_{D\leq 2N}\frac{1}{\phi(D)}\left(
\sumstar_{b\modd{D}}Z(b^2, D)-\sumstar_{a\modd{D}}Z(a, D)\right)
\ll\frac{xN}{\log^{A+2L} x},\]
or indeed that
\[\E:=\sum_{\substack{\U_1, \U_2\\ \Cf_1(\U_1, \U_2, J_1, J_2)}} \sum_{D}\frac{1}{\phi(D)}\left(
\sumstar_{b\modd{D}}Z(b^2, D)-\sumstar_{a\modd{D}}Z(a, D)\right)
\ll\frac{N}{\log^A x}.\]
Here we have dropped the condition $D\le 2N$, which follows
automatically since $\beta_z$ is supported on $N(z)\le 2N$.

It turns out that it is easier to estimate the corresponding sum in
which the factor $1/\phi(D)$ is replaced by $D/\phi(D)$.  To make this
transition we will use summation by parts on $D$ to see that 
\begin{equation}
\E \ll \sum_{\U_1, \U_2}\frac{|\E_0(t_0)|}{\Df_0},
\end{equation}where $t_0 = t_0(\U_1, U_2) > \Df_0$ is such that
$|\E_0(t)|$ is maximal, with 
\begin{equation}
\E_0(t)=\E_0(t,\U_1,\U_2) := \sum_{D < t} \frac{D}{\phi(D)}\left(
\sumstar_{b\modd{D}}Z(b^2, D)-\sumstar_{a\modd{D}}Z(a, D)\right).
\end{equation}

We now wish to remove the condition $D<t$ by arguing that the
contribution of those $\U_1$, $\U_2$ such that there exists $z_1, z_1'
\in \U_1$, $z_2, z_2' \in \U_2$ with $\Delta(z_1, z_2) < t$ and
$\Delta(z_1', z_2') \geq t$ is negligible.  Indeed, for such $\U_1,
\U_2$, we must have 
\begin{equation}
\Delta(z_1, z_2) = t + O(N\omega) = t\left(1+O((\log x)^{A+6-L})\right)
\end{equation} 
for all $z_1 \in \U_1$, $z_2 \in \U_2$.  If $\U_1=\U(c_1, \theta_1)$ and
$\U_2=\U(c_2, \theta_2)$, then  
$$\sin(\theta_1 - \theta_2) = \frac{t}{c_1c_2N}
\left(1+ O((\log x)^{A+6-L})\right).
$$
In general that if one restricts $\sin(\phi)$ to an interval of length
$\mu$, then $\phi$ will be confined to a set of measure
$O(\sqrt{\mu})$ modulo $\pi/2$ and hence modulo $2 \pi$ as well.  Indeed, suppose $\phi_1, \phi_2 \in [0, \pi/2 - \sqrt \mu]$, and that $|\sin \theta_1 - \sin \theta_2| \ll \mu$.  We want to show that $|\theta_1 - \theta_2| \ll \sqrt \mu$.  This follows from the elementary formula  
$$\sin \phi_1 - \sin \phi_2 = 
2 \cos \bfrac{\phi_1+\phi_2}{2} \sin \bfrac{\phi_1 - \phi_2}{2},$$  
provided that $\cos \bfrac{\phi_1+\phi_2}{2} \gg \sqrt{\mu}$.  The latter bound follows from $\pi/2  - (\phi_1 + \phi_2)/2  \gg \sqrt \mu$.

Hence if we fix
$c_1, \theta_1$ and $c_2$  then the total number of
choices for $\theta_2$ is $\ll (\log x)^{(A+6+L)/2}$.  This
gives $\ll(\log x)^{(A+6+7L)/2}$ total possibilities for $(\U_1,
\U_2)$.  A given residue class $a$ occurs $O(\tau(D))$ times as $b^2$,
and $D/\phi(D)\ll\log x$.  Thus
each pair $(\U_1,\U_2)$ contributes $\ll \omega^4 N^2(\log x)^{17}$ to the
sum, by Lemma \ref{lem:ZbD},
giving a total contribution of  
\begin{equation}
\ll \frac{N^2(\log x)^{(A+40-L)/2}}{\Df_0} \ll
\frac{N}{(\log x)^{(L-3A-52)/2}} 
\end{equation} to $\E$, which suffices when $L =6A+52$. 

It now suffices to show that
\begin{equation}\label{eqn:E1bdd2}
\E_1= \E_1(\U_1, \U_2) = \sum_{D}\frac{D}{\phi(D)}\left(
\sumstar_{b\modd{D}}Z(b^2, D)-\sumstar_{a\modd{D}}Z(a, D)\right) \ll
\frac{N^2}{\log^{C_1}x}, 
\end{equation}for any $C_1>0$ and for fixed $\U_1, \U_2$.
Since we have $\Delta>0$ whenever
$(z_1,z_2)\in\U_1\times\U_2$ we see that if $D\mid\Delta$ then
\[\sum_{k\mid\Delta/D}\mu(k)=\left\{\begin{array}{cc}
1, & D=\Delta,\\  0, & D\not=\Delta.\end{array}\right.\] 
We therefore have
\[\E_1=\sum_{D=1}^{\infty}\sum_{k=1}^\infty 
\frac{D\mu(k)}{\phi(D)}\left(
\sumstar_{b\modd{D}}W(b^2,k,D)-\sumstar_{a\modd{D}}W(a,k,D)\right)\]
where 
\[W(a,k,D):=\sumb_{\substack{(z_1, z_2) \in \U_1 \times \U_2\\
kD\mid\Delta\\ az_2 \equiv z_1 \modd{D}}}\beta_{z_1}\beta_{z_2}.\]
When $kD\mid\Delta$, there is a unique integer
$c=c(z_1,z_2;kD)$ modulo $kD$ such that $cz_2 \equiv z_1 \modd{kD}$, and
conversely this congruence implies that $kD\mid\Delta$.  For this
integer $c$ we have $(c,kD)=1$ 
and
\[\#\{b\modd{D}: b^2z_2 \equiv z_1 \modd{D}\}=
\#\{b\modd{D}: b^2\equiv c\modd{D}\}=
\sum_{\substack{\chi\modd{D}\\ \chi^2=\chi_0}}\chi(c).\]

It now follows that
\[\sumstar_{b\modd{D}}W(b^2,k,D)-\sumstar_{a\modd{D}}W(a,k,D)
=\sum_{\substack{\chi\modd{D}\\ \chi^2=\chi_0\\ \chi\not=\chi_0}}\sumstar_{c\modd{kD}}
\sumb_{\substack{(z_1,z_2)\in\U_1\times\U_2\\ cz_2\equiv z_1\modd{kD}}}
\beta_{z_1}\beta_{z_2}\chi(c),\]
and hence that
\[\E_1=\sum_{D=1}^{\infty}\sum_{k=1}^\infty 
\frac{D\mu(k)}{\phi(D)}
\sum_{\substack{\chi\modd{D}\\ \chi^2=\chi_0\\ \chi\not=\chi_0}}\sumstar_{c\modd{kD}}
\sumb_{\substack{(z_1,z_2)\in\U_1\times\U_2\\ cz_2\equiv z_1\modd{kD}}}
\beta_{z_1}\beta_{z_2}\chi(c),\]
Let $d=d(\chi)$ be the conductor of $\chi$ and write $D=de$ and
$ek=f$, giving
\[\E_1=\sum_{d>1}\sum_{f}C(d,f)\sumstar_{\substack{\chi\modd{d}\\ \chi^2=\chi_0}}
\sumstar_{c\modd{df}}
\sumb_{\substack{(z_1,z_2)\in\U_1\times\U_2\\ cz_2\equiv z_1\modd{df}}}
\beta_{z_1}\beta_{z_2}\chi(c),\]
where
\[C(d,f)=\sum_{ek=f}\frac{de\mu(k)}{\phi(de)}=
\frac{d}{\phi(d)}\sum_{ek=f}\frac{\phi(d)e\mu(k)}{\phi(de)}.\]
Note that the sum for $\chi\modd{d}$ is empty unless $d=d_1$ or $4d_1$
or $8d_1$, with $d_1$ odd and square-free, in which cases there are at
most two possible characters $\chi$.
When $d$ is given, the function $\kappa(e):=\phi(d)e/\phi(de)$ is
multiplicative in $e$.  Moreover if $v\ge 1$ then
\[(\kappa*\mu)(p^v)=\left\{\begin{array}{cc} (p-1)^{-1}, & \mbox{if $v=1$
  and $p\nmid d$,}\\ 0, & \mbox{otherwise.}\end{array}\right.\]
We then see that
\[C(d,f)=\frac{d\mu^2(f)}{\phi(df)}\]
if $(d,f)=1$ and $C(d,f)=0$ otherwise.  This leads to the expression
\begin{align}\label{eqn:E1b}
\E_1=\sum_{\substack{f,d\\ (d,f)=1}}\frac{d\mu^2(f)}{\phi(df)}
\sumstar_{\substack{\chi\modd{d}\\ \chi^2=\chi_0\\ \chi\not=\chi_0}}
\left(\sumstar_{c\modd{df}}\sumb_{\substack{(z_1,z_2)\in\U_1\times\U_2\\ cz_2\equiv
      z_1\modd{df}}} \beta_{z_1}\beta_{z_2}\chi(c)\right).
\end{align}

We proceed to show that large values of $f$ make a negligible
contribution.  Since $df\mid\Delta(z_1,z_2)$ we will have $df\le 2N$.
On recalling that
$0\le\beta_z\le 1$ we find that
\begin{eqnarray*}
\lefteqn{\sum_{f>F}\sum_{\substack{d\\ (d,f)=1}}\frac{d\mu^2(f)}{\phi(df)}
\sumstar_{\substack{\chi\modd{d}\\ \chi^2=\chi_0}}
\left|\sumstar_{c\modd{df}}\sumb_{\substack{(z_1,z_2)\in\U_1\times\U_2\\ cz_2\equiv
      z_1\modd{df}}} 
\beta_{z_1}\beta_{z_2}\chi(c)\right|}\hspace{3cm}\\
&\ll& (\log x)\sum_{f>F}f^{-1}\sum_{d\le 2N/f} \sum_{\substack{df|D\\D \le
    2N}}\sumstar_{a \modd{D}} \tZ(a, D)\\ 
&\ll&(\log x)\sum_{f>F}f^{-1}\sum_{d\le 2N/f} \sum_{\substack{df|D\\D \le 2N}} N\\
&\ll&\frac{N^2(\log x)^2}{F},
\end{eqnarray*}
by Lemma \ref{lem:ZbD}.  An inspection of \eqref{eqn:E1bdd2} and
\eqref{eqn:E1b} reveals that this 
is satisfactory if we take
\[F=(\log x)^{C_1+2}.\]

Next, we split our sum into three different ranges for $d$, namely $d
\leq D_1$, $D_1 < d \le D_2$, $d> D_2$ where 
\[D_1 = F^{10} (\log x)^{2C_1+14}\;\;\;\mbox{and}\;\;\;
D_2 = \frac{N}{F^{15}(\log x)^{3C_1+21}}.  \]
We deal with the middle range
for $d$ in Section \ref{sec:middled}, and the remaining ranges
in Section \ref{sec:smalllarged}. 

\section{Proof of Proposition \ref{prop:bilinear3a}; 
Middle $d$} \label{sec:middled}

We proceed to dispose of the middle range of values for $d$, making use
of a major intermediate result from the work of Friedlander and
Iwaniec \cite{FI}. Thus we will consider the case in which $D<d\le 2D$
say.  (We should make it clear to the reader that there is no longer any
connection between $D$ and $\Delta$.) We will set
\[\E_1(D):=\sum_{f\le F}f^{-1}\mu^2(f)\sum_{\substack{D<d\le 2D\\ (d,f)=1}}
\sumstar_{\substack{\chi\modd{d}\\ \chi^2=\chi_0}}\left|\sumstar_{c\modd{df}}
\sumb_{\substack{(z_1,z_2)\in\U_1\times\U_2\\ cz_2\equiv z_1\modd{df}}} 
\beta_{z_1}\beta_{z_2}\chi(c)\right|.\]

Let $d=d_1d_2$ where $d_1$ is odd and square-free, and $d_2$ is 1, 4,
or 8, and write $\chi=\chi_1\chi_2$ accordingly, so that $\chi_1(n)$
is the Jacobi symbol $(\frac{n}{d_1})$.  If we set $g=fd_2$ then $g$
and $d_1$ will be coprime.  We now split
the variables $z_j$ into congruence classes $z_j\equiv w_j\modd{g}$ 
and find that
\[\E_1(D)\ll \sum_{f\le F}\; \sum_{d_2=1,4,8}\;
\sum_{w_1,w_2\modd{g}}\E_2(D;g,w_1,w_2),\]
where
\[\E_2(D;g,w_1,w_2):=\sum_{\substack{D/8<d_1\le 2D\\ (d_1,g)=1}}
\left|\sumstar_{b\modd{d_1}}
\sumb_{\substack{(z_1,z_2)\in\U_1\times\U_2\\ z_j\equiv w_j\modd{g}
\\ bz_2\equiv z_1\modd{d_1}}} 
\beta_{z_1}\beta_{z_2}\left(\frac{b}{d_1}\right)\right|.\]
We now write $\gamma_{z_j}=\beta_{z_j}$ if $z_j\equiv w_j\modd{g}$, and
$\gamma_{z_j}=0$ otherwise, so that $0\le\gamma_1,\gamma_2\le 1$. It
follows that there is some choice of $f,d_2$ and $w_1,w_2$ such that
\begin{align}\label{eqn:E1c}
\E_1(D)&\ll F^5\E_3(D)
\end{align}
with 
\[\E_3(D)=\sum_{D/8<d\le 2D}\mu^2(2d)\left|\sumstar_{b\modd{d}}\;
\sumb_{\substack{(z_1,z_2)\in\U_1\times\U_2\\ 
 \\ bz_2\equiv z_1\modd{d}}} 
\gamma_{z_1}\gamma_{z_2}\left(\frac{b}{d}\right)\right|.\]
(Here we have replaced the dummy variable $d_1$ by $d$, for notational
convenience.) 

Writing $z_j=x_j+iy_j$ for $j=1,2$ we classify the numbers $z_j$
according to the value of $(x_1,d)=r$, say. Since $bz_2\equiv z_1\modd{d}$ 
with $(b,d)=1$ we will have $(x_2,d)=r$ also. We now write $d=rs$
and $x_j=ru_j$ for $j=1,2$, so that $(u_j,s)=1$. Since $z_j$ is
primitive and $r\mid x_j$ it follows that $(y_j,r)=1$.  Then $d\mid
x_1y_2-x_2y_1$ if and only if there is a $b$
such that $bz_2\equiv z_1\modd{d}$.  Moreover $d\mid
x_1y_2-x_2y_1$ if and only if
$u_1y_2-u_2y_1$. Thus there is an integer $c$ such that
$y_1u_1^{-1}\equiv y_2u_2^{-1}\equiv c\modd{s}$. 
We will also have $bu_2\equiv u_1\modd{s}$ and
$by_2\equiv y_1\modd{r}$, so that
\[\left(\frac{b}{d}\right)=\left(\frac{b}{r}\right)\left(\frac{b}{s}\right)
=\left(\frac{y_1y_2}{r}\right)\left(\frac{u_1u_2}{s}\right).\]
These considerations show that
\[\E_1(D)\ll 
F^3\sum_{D/8<rs\le 2D}\sum_{c\modd{s}}\mu^2(2rs)|\Sigma_1(r,s,c)\Sigma_2(r,s,c)|,\]
with
\[\Sigma_j(r,s,c):=
\sumb_{\substack{ru_j+iy_j\in\U_j \\ y_ju_j^{-1}\equiv c\modd{s}}} 
\gamma_{ru_j+iy_j}\left(\frac{y_j}{r}\right)\left(\frac{u_j}{s}\right).\]
We have dropped the condition $(u_j,s)=1$ since the Legendre symbol
vanishes when this fails to hold.

By Cauchy's inequality we deduce that for either $j=1$ or $j=2$,
\begin{equation}\label{eqn:E1c2}
\E_1(D)\ll F^5\sum_{r\leq 2D}\;\sum_{D/8r<s\le 2D/r}\mu^2(2rs)
\sum_{c\modd{s}}|\Sigma_j(r,s,c)|^2.
\end{equation}
Suppressing the dependence on $r$ and $j$
we may write
\[\Sigma_j(r,s,c):=\sum_{yu^{-1}\equiv c\modd{s}}
\alpha_{u,y}\left(\frac{u}{s}\right)\]
where
\[\alpha_{u,y}=\gamma_{ru+iy}\left(\frac{y}{r}\right)\]
when $z=ru+iy\in\U_j$ and the conditions for $\sumb$ hold, and
$\alpha_{u,y}=0$ otherwise. The coefficients $\alpha_{u,y}$ are
therefore supported in $|u|\leq \sqrt{2N}/r$ and
$|y|\leq\sqrt{2N}$. Since $\beta_z$ is supported on values with $z$
primitive and $\tReb(z)$ odd, we may assume that $(u,2y)=1$.

We are now ready to apply Proposition 14.1 of Friedlander and Iwaniec
\cite{FI}. This shows that if $\alpha_{u,y}$ is supported in $U<u<2U$
and $Y<y<2Y$, then
\[\sum_{S<s\le 2S}\mu^2(2s)\sum_{c\modd{s}}\left|\sum_{yu^{-1}\equiv c\modd{s}}
\alpha_{u,y}\left(\frac{u}{s}\right)\right|^2\le 
N(S,U,Y)\sum_{u,y}\tau(u)|\alpha_{u,y}|^2\]
with
\[N(S,U,Y)\ll_{\epsilon} S+S^{-1/2}UY+S^{1/3}(UY)^{2/3}(\log UY)^4+
(U+Y)^{1/12}(UY)^{11/12+\epsilon}\]
for any fixed $\epsilon>0$.

In our case we have
\[\sum_{u,y}\tau(u)|\alpha_{u,y}|^2\ll UY(\log x).\]
Thus, if we sum over dyadic ranges for $u$, $y$, and $s$, we obtain
\begin{eqnarray*}
\lefteqn{\sum_{D/8r<s\le 2D/r}\mu^2(2rs)\sum_{c\modd{s}}|\Sigma_j(r,s,c)|^2}\\
&\ll& \left\{\frac{D}{r}+\left(\frac{D}{r}\right)^{-1/2}\frac{N}{r}
+\left(\frac{D}{r}\right)^{1/3}\left(\frac{N}{r}\right)^{2/3}(\log x)^4
+N^{1/24}\left(\frac{N}{r}\right)^{11/12}N^{\epsilon}
\right\}\frac{N}{r}(\log x)\\ 
&\ll&
\{D+D^{-1/2}N+D^{1/3}N^{2/3}+N^{23/24+\epsilon}\}\frac{N}{r}(\log
x)^5.
\end{eqnarray*}
Summing over $r$, we see that
from \eqref{eqn:E1c2} that
\[\E_1(D)\ll F^5 (\log x)^6
\{D+D^{-1/2}N+D^{1/3}N^{2/3}+N^{23/24+\epsilon}\}N.\]
Thus, on summing over dyadic ranges for $D$, we see that values of $d$
with $D_1\leq d\leq D_2$ make a satisfactory contribution given our
choices of $D_1$ and $D_2$.

\section{Proof of Proposition \ref{prop:bilinear3a}
: large $d$ and small $d$}\label{sec:smalllarged} 
\subsection{Large $d$}\label{sec:larged}
Now, we will show for fixed $f \leq F$, and any $C >0$ that
\begin{equation}
\sum_{\substack{d>D_2\\ (d,f)=1}}\frac{d}{\phi(d)}
\sumstar_{\substack{\chi\modd{d}\\ \chi^2=\chi_0}}
\left(\sumstar_{c\modd{df}}
\sumb_{\substack{(z_1,z_2)\in\U_1\times\U_2\\ cz_2\equiv z_1\modd{df}}} 
\beta_{z_1}\beta_{z_2}\chi(c)\right)\ll_C \frac{N^{2}}{\log^C x}.
\end{equation}  
In the sequel, we shall use
the convention that $C$ denotes a large positive constant, not
necessarily the same from line to line. 

As in the previous section we decompose $d$ as $d_1d_2$ and $\chi$ as
$\chi_1\chi_2$.  Writing $\Delta=\Delta(z_1,z_2)$ for short, we 
must have $df\mid\Delta$, and so we may set
$\Delta=d_1et$ where $e$ is odd and $t$ is a power of 2. Our
conditions on $\U_1$ and $\U_2$ ensure that $0<\Delta\le 2N$,
whence $1\le et\le 16N/D_2\ll (\log x)^{18C_1+51}$. We split the
sums over $z_j$ into congruence classes $z_j\equiv w_j\modd{8et}$, and
fix the parameters 
\begin{equation}\label{eqn:set}
f,\;\; d_2,\;\; \chi_2,\;\; e,\;\; w_1,\;\; w_2\;\; \mbox{and}\;\; t. 
\end{equation}
Each admissible pair $z_1,z_2$ corresponds to a unique integer
$k\modd{\Delta}$ with the property that $kz_2\equiv
z_1\modd{\Delta}$, and then
\[\chi(c)=\chi(k)=\chi_2(k)\left(\frac{k}{d_1}\right),\]
where $\chi_2(k)$ is determined by the parameters \eqref{eqn:set}.  The
number of choices for the parameters \eqref{eqn:set} is bounded by a fixed
power of $\log x$ so that it suffices to show that
\[\sum_{\substack{d_1>D_2/d_2\\ (d_1,2f)=1}}\frac{d_1\mu^2(d_1)}{\phi(d_1)}
\left(\sumstar_{k\modd{\Delta}}
\sumb_{z_1,z_2}\beta_{z_1}\beta_{z_2}\left(\frac{k}{d_1}\right)\right)\ll_C 
\frac{N^{2}}{\log^C x}\]
for every $C>0$, where the sum over $z_1,z_2$ satisfies the conditions
\[(z_1,z_2)\in\U_1\times\U_2,\;\; kz_2\equiv z_1\modd{\Delta}\;\;
z_j\equiv w_j\modd{8et}\;(j=1,2)\;\;\mbox{and}\;\;\Delta=d_1 et.\]

We proceed to investigate the Jacobi symbol which occurs here.  Our
analysis is very close to that given by Friedlander and Iwaniec 
\cite[Lemma 17.1]{FI}. However our situation is not quite the same as
theirs. In what follows we will make repeated use of the fact that
$z_j=x_j+iy_j$ with $x_j>0$ and $(x_j,2y_j)=1$.
If we set $r=(x_1,\Delta)$ then, as in the previous
section, we have $r=(x_2,\Delta)$, allowing us to write $x_j=ru_j$ for
$j=1$ and $2$.  Since the $x_j$ are odd we see that $r\mid d_1 e$, and
setting $s=d_1e/r$ we find that $ku_2\equiv u_1\modd{s}$
and $ky_2\equiv y_1\modd{r}$.  Then $(k,e)=1$ since $(k,\Delta)=1$,
and $(y_j,r)=1$, since $z_j$ is primitive.  We therefore find that
\[\left(\frac{k}{d_1}\right)=\left(\frac{k}{e}\right)
\left(\frac{k}{r}\right)\left(\frac{k}{s}\right)=\left(\frac{k}{e}\right)
\left(\frac{y_1y_2}{r}\right)\left(\frac{u_1u_2}{s}\right).\]
Recalling that $z_j \equiv w_j \modd{e}$, 
we see that $\left(\frac{k}{e}\right)$ is determined by $w_1$ and $w_2$.  By
quadratic reciprocity we have
\[\left(\frac{u_1u_2}{s}\right)=\left(\frac{s}{u_1u_2}\right)<s,u_1u_2>,\]
where
\[<a,b>:=(-1)^{(a-1)(b-1)/4}\]
for odd integers $a,b$. However $\Delta=x_1y_2-x_2y_1$, whence
$st=u_1y_2-u_2y_1$.  It follows that
\[\left(\frac{s}{u_1}\right)=\left(\frac{-tu_2y_1}{u_1}\right)\]
and
\[\left(\frac{s}{u_2}\right)=\left(\frac{tu_1y_2}{u_2}\right),\]
so that
\begin{eqnarray*}
\left(\frac{s}{u_1u_2}\right)&=&\left(\frac{t}{u_1u_2}\right)
\left(\frac{-1}{u_1}\right)\left(\frac{y_1}{u_1}\right)
\left(\frac{y_2}{u_2}\right)<u_1,u_2>\\
&=&\left(\frac{t}{x_1x_2}\right)
\left(\frac{-1}{x_1}\right)\left(\frac{-1}{r}\right)
\left(\frac{y_1}{u_1}\right)\left(\frac{y_2}{u_2}\right)<u_1,u_2>.
\end{eqnarray*}
The first two factors on the right are determined by the parameters
\eqref{eqn:set}, since $z_j\equiv w_j\modd{8}$, whence
\[\left(\frac{k}{d_1}\right)=\eta\left(\frac{y_1y_2}{r}\right)
<s,u_1u_2>\left(\frac{-1}{r}\right)
\left(\frac{y_1}{u_1}\right)\left(\frac{y_2}{u_2}\right)<u_1,u_2>,\]
for some value of $\eta=\pm 1$ determined by the parameters \eqref{eqn:set}.
We then deduce that
\[\left(\frac{k}{d_1}\right)=\eta\left(\frac{y_1}{x_1}\right)
\left(\frac{y_2}{x_2}\right)<r,r><s,u_1u_2><u_1,u_2>.\]
We can now use the relations $<ab,c>=<a,c><b,c>$ and
$<a,bc>=<a,b><a,c>$ to conclude that
\begin{align*}
\left(\frac{k}{d_1}\right)&=\eta\left(\frac{y_1}{x_1}\right)
\left(\frac{y_2}{x_2}\right)<sr, u_1u_2><x_1,x_2> \\
&= \eta\left(\frac{y_1}{x_1}\right)
\left(\frac{y_2}{x_2}\right)<d_1e, x_1x_2><x_1,x_2>.
\end{align*}
The parameters \eqref{eqn:set} determine $x_1, x_2 \modd{8et}$, and it follows that the factor $<x_1,x_2>$ is uniquely determined by the parameters \eqref{eqn:set}.  Moreover, 
$$d_1et = \Delta = \tIm (\bar z_1 z_2)  \equiv \tIm (\bar w_1w_2) \modd{8et}.
$$Writing $\tIm (\bar w_1 w_2) = etn$, we see that $d_1 \equiv n \modd{8}$ so that $d_1 \modd{8}$ is determined by the parameters \eqref{eqn:set}, and so $<d_1e, x_1x_2>$ is also.

We therefore see that it will be enough to show that
\[\sumb_{z_1,z_2}\frac{d_1\mu^2(d_1)}{\phi(d_1)}\beta_{z_1}
\left(\frac{y_1}{x_1}\right)\beta_{z_2}\left(\frac{y_2}{x_2}\right)
\ll_C \frac{N^{2}}{\log^C x}\]
for each given set of parameters \eqref{eqn:set}, where the sum over
$z_1,z_2$ is restricted by the conditions
\[(z_1,z_2)\in\U_1\times\U_2,\;\; z_j\equiv w_j\modd{8et}\;(j=1,2),
\;\; et\mid\Delta, \;\; d_1>D_2/d_2,\;\;\mbox{and}\;\;(d_1,2f)=1.\]
Moreover $d_1$ is then defined by $d_1:=\Delta/et$. We may remove the 
conditions for $\sumb$, since $\beta_z$ is supported on primitive
$z\equiv 1\modd{2}$ with positive real part. 
Moreover the condition $et\mid\Delta$ depends only on the
choice of $w_1$ and $w_2$.  Following the notation of Friedlander and Iwaniec
\cite[Section 17]{FI} we write
\[\beta_z'=\beta_z[z]=\beta_z i^{(x-1)/2}\left(\frac{y}{x}\right)\]
for $z=x+iy$. Since the factors corresponding to $i^{(x-1)/2}$ are
determined by $w_1$ and $w_2$ we then see that it suffices to show
that
\[\E_4:=\sum_{z_1,z_2}\frac{d_1\mu^2(d_1)}{\phi(d_1)}\beta_{z_1}'
\beta_{z_2}'\ll_C \frac{N^{2}}{\log^C x},\]
where the sum is subject to
\[(z_1,z_2)\in\U_1\times\U_2,\;\; z_j\equiv w_j\modd{8et}\;(j=1,2),
\;\; d_1>D_2/d_2,\;\;\mbox{and}\;\;(d_1,2f)=1,\]
with $d_1:=\Delta/et$ as before.

The multiplicative function
\[\kappa(n):=\left\{\begin{array}{cc} \frac{n\mu^2(n)}{\phi(n)}, & 
(n,2f)=1,\\ 0, & (n,2f)>1,\end{array}\right.\]
may be written as $\kappa=\kappa_0*1$, where $\kappa_0(n)=\mu(n)$ for
$n\mid 2f$, and for $p\nmid 2f$,
\[\kappa_0(p)=\frac{1}{p-1},\;\;\;\kappa_0(p^2)=\frac{-p}{p-1},\;\;\;
\mbox{and}\;\;\;\kappa_0(p^r)=0,\;(r\ge 3).\]
It follows that we may write
\[\E_4=\sum_{n=1}^{\infty}\kappa_0(n)\sum_{z_1,z_2}\beta_{z_1}'\beta_{z_2}'\]
where the inner sum is subject to
\begin{equation}\label{c*}
(z_1,z_2)\in\U_1\times\U_2,\;\; z_j\equiv w_j\modd{8et}\;(j=1,2),
\;\; \Delta>etD_2/d_2,\;\;\mbox{and}\;\; etn\mid\Delta.
\end{equation}
Since
\[\sumb_{\substack{(z_1,z_2)\in\U_1\times\U_2\\ n\mid\Delta}}\leq
\sum_{\substack{D\le 2N\\ n\mid D}}\sumstar_{a \modd{D}} \tZ(a, D)\ll N^2/n\]
by \eqref{eqn:lemZaD1}, the contribution to $\E_4$ from integers
$n\ge (\log x)^B$ say is
\begin{eqnarray*}
&\ll&\sum_{n\ge (\log x)^B} \frac{N^2}{n}|\kappa_0(n)|\\
&\ll&\sum_{n_1\mid 2f}\sum_{n_2n_3^2\ge (\log x)^B/n_1} 
\frac{N^2\log(n_2n_3)}{n_2^2n_3^2}\\
&\ll&\sum_{n_1\mid 2f}\sum_{m^2\ge (\log x)^B/n_1} 
\frac{N^2\tau(m)\log m}{m^2}\\
&\ll&\sum_{n_1\mid 2f}N^2(\log x)^{1-B/2}n_1^{1/2}\\
&\ll & FN^2 (\log x)^{1-B/2}.
\end{eqnarray*}
In order to achieve a bound $O(N^2(\log x)^{-C})$ for $\E_4$ we choose
$B$ so that $F(\log x)^{1-B/2}=(\log x)^{-C}$.  Thus
it will now be enough to show that
\[\sum_{z_1,z_2}\beta_{z_1}'\beta_{z_2}'\ll_C \frac{N^{2}}{\log^{C+B} x}\]
for every fixed $C>0$, for each individual value of $n\le (\log
x)^B$.  The sum is again subject to \eqref{c*}, and we can allow for
the constraint $etn\mid\Delta$ by subdividing the sum according to the
values of $z_j$ modulo $8etn$.  Thus there are Gaussian integers
$w_1,w_2$ for which it will suffice to show that
\[\sum_{\substack{(z_1,z_2)\in\U_1\times\U_2\\ z_j\equiv w_j\modd{8etn}\\
\Delta>etD_2/d_2}}\beta_{z_1}'\beta_{z_2}'\ll_C \frac{N^{2}}{\log^C x}\]
for every fixed $C>0$, and for each choice of $e,t,n\le (\log x)^C$ 
and of $w_1$ and $w_2$.

Our next task is to remove the condition $\Delta>etD_2/d_2$.  This may
be done by further subdividing the regions $\U_i$ and following the
methods of Section \ref{section:bilinearpf3b}, used to prove
Proposition \ref{prop:bilinear3b}.  We do not repeat the details.

%

We now have to handle
\[\sum_{\substack{(z_1,z_2)\in\U_1\times\U_2\\ z_j\equiv
    w_j\modd{8etn}}}\beta_{z_1}'\beta_{z_2}'
=\left\{\sum_{\substack{z\in\U_1\\ z\equiv
      w_1\modd{8etn}}}\beta_z'\right\}
\left\{\sum_{\substack{z\in\U_2\\ z\equiv w_2\modd{8etn}}}\beta_z'\right\}\]
we note that $8etn$ is at most a power of $\log x$, while $\beta_z'$ is
supported on values for which every prime factor of $N(z)$ is at least
$x^{\delta}$.  Thus $z$ is automatically coprime to
$8etn$, and so we may assume that $w_1$ and $w_2$ are coprime to
$8etn$. This allows us to pick out the congruence conditions
$z\equiv w\modd{8etn}$ using multiplicative characters
$\chi\modd{8etn}$ over $\mathbb{Z}[i]$. We therefore deduce that
\[\sum_{\substack{z\in\U_j\\ z\equiv w\modd{8etn}}}\beta_z'=
\frac{1}{\phi_{\mathbb{Q}[i]}(8etn)}\sum_{\chi\modd{8etn}}
\overline{\chi}(w)S(\chi,\U_j),\]
where $\phi_{\mathbb{Q}(i)}(q)$ is the Euler $\phi$-function for the
Gaussian integers, and 
$$S(\chi, \U) = \sum_{z \in \U} \beta_z' \chi(z).$$
We have then reduced our problem to one of showing that for all $C>0$,
\begin{equation}
S(\chi, \U) \ll_C N (\log x)^{-C},
\end{equation}
for all $\chi\modd{8etn}$ and $\U = \U_1$ or $\U= \U_2$.  From now on
it will be convenient to set $f=8etn$, so that $\chi$ is a character
to modulus $f$. The reader should note that we are recycling some of
our previous notation --- the symbol $f$ no longer has its former meaning!

We recall here that $\U$ is of the form 
\begin{equation}\label{Uf}
\U = \{z: \sqrt{N'} < |z| \leq (1+\omega_1)\sqrt{N'}, \theta_0 < \arg
z < \theta_0 +\omega_2\}, 
\end{equation}
where $N' \asymp N$.  We pick out the condition $\arg z \in (\theta_0,
\theta_0+\omega_2)$ using a twice continuously differentiable
periodic function $w(\theta)$, where 
\begin{equation}
w(\theta)
\begin{cases}
= 1 &\textup{ if $\theta \in (\theta_0, \theta_0+\omega_2)\modd{2\pi}$}\\
= 0 &\textup{ if $\theta \notin [\theta_0-\log^{-C}x,
  \theta_0+\omega_2+\log^{-C}x)]\modd{2\pi}$}, 
\end{cases}
\end{equation}and where $|w''(\theta)| \ll \log^{2C} x$.  Then
\begin{equation}
S(\chi, \U)  =\sum_{N' < N(z) \le N'(1+\omega)} \beta_z' \chi(z) 
w(\arg z) + O\bfrac{N}{\log^C x}.
\end{equation}
The Fourier coefficients of $w$ satisfy $c_k\ll k^{-2}\log^{2C} x$ for
$k\not=0$, whence
\begin{equation*}
w(\arg z) = \sum_k c_k \bfrac{z}{|z|}^k = \sum_{|k| \leq \log^{3C}x} 
c_k \bfrac{z}{|z|}^k + O(\log^{-C} x).
\end{equation*}
It then suffices to show that
\begin{equation}
S(\chi, N', k) :=
\sum_{N' < N(z) \le N'(1+\omega)} \beta_z' \chi(z) \bfrac{z}{|z|}^k
\ll_C N (\log x)^{-4C}
\end{equation}
for any $C >0$, and for $|k|\le\log^{3C}x$.  Indeed we will do rather
better, and show that one can achieve a small power saving in $N$.

Recall that $\beta_z = \beta_{N(z)}$, where $\beta_n$ is the indicator
function of a set of one of the shapes
\begin{eqnarray*}
Q_j:&=&\{p_1...p_{j+1}\in (N', N'(1+\omega)]: p_{j+1} \in J,\;
 p_{j+1} < ... < p_1 < Y,\\
 && \hspace{2cm}p_1...p_j <Y\le p_1...p_{j+1}<x^{1/2-\delta} \} 
\end{eqnarray*}
or
\[ R:=\{n\in (N', N'(1+\omega)]: (n, P(V)) = 1\}.\] 
Here we will have $0\le j\le n_0=\left[\frac{\log Y}{\delta\log
    x}\right]$, and $J = [V, V(1+\kappa))\subseteq [x^{\delta},Y)$.  
In particular we interpret $Q_0$ to be $\{p: p\in
J\cap(N',N'(1+\omega)]\}$.

It will be convenient to write
\begin{equation}\label{ld}
\lambda(n) = \sump_{N(z) = n} \chi(z) \bfrac{z}{|z|}^k [z],
\end{equation}
where $\sump$ denotes a sum over primitive $z$. Then
\[S(\chi, N', k)=\sum_n \lambda(n),\]
where $n$ runs over $R$, or one of the sets $Q_j$.  We discuss the
procedure in the case of $Q_j$, the situation for the set $R$ being
similar, or indeed easier.  One small difference is that elements of
$R$ need not be square-free.  However integers $n\in R$ which are not
square-free make a negligible contribution, since any prime factor
must be at least $x^{\delta}$.

We begin by
handling the terms in which the largest prime factor of $n$, which we
write as $P(n)$ say, exceeds $N^{99/100}$. The contribution from such integers 
is
\[\sum_{m\le 2N^{1/100}}\;\sum_{\substack{p>\max(P(m),N^{99/100})\\ mp\in
    Q_j}}\lambda(mp).\]
Since $p$ is the largest prime factor of $mp$ one sees from the
definition of the set $Q_j$ that one may rewrite the conditions
$p>P(m)$ and $mp\in Q_j$ to say that $p$ runs over a certain interval
$I_j(m)\subseteq [N/m,2N/m)$.  We may then apply the following result
of Iwaniec and Friedlander \cite[Theorem $2^{\psi}$]{FI}

\begin{lem} \label{lem:JKprime}
For any $m \geq 1$ we have
\begin{equation}
\sum_{n\leq U} \Lambda(n) \lambda(mn) \ll f(|k|+1)m U^{76/77},
\end{equation}
where $\lambda$ is given by \eqref{ld} and $\chi$ is a character modulo $f$.
\end{lem}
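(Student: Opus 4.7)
The plan is to derive Lemma \ref{lem:JKprime} from Theorem $2^{\psi}$ of \cite{FI}, which is essentially this statement in the case $m = 1$. The work therefore consists of reducing to that case and keeping track of the conductor dependence on $f$ and $|k|$.

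First, I would expand $\lambda(mn)$ using the factorisation of primitive Gaussian integers. By Lemma \ref{lem:elefac}, any primitive $z$ with $N(z) = mn$ and $\tReb(z) > 0$ odd admits a unique divisor $w \mid z$ with $N(w) = m$; writing $z = wu$, the cofactor $u$ is itself primitive with $N(u) = n$. The grossencharacter $\chi(z)(z/|z|)^k$ splits multiplicatively across this factorisation, and the spin symbol $[wu] = i^{(\tReb(wu)-1)/2}(\tIm(wu)/\tReb(wu))$ splits as a product of Jacobi symbols in $w$ and $u$ together with a residue-class correction modulo $8$, along the lines of the manipulations carried out in Section \ref{sec:larged}. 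After fixing $w$ (there are at most $\tau(m)$ choices) and the relevant residue classes modulo a small integer, the sum $\sum_{n\leq U}\Lambda(n)\lambda(mn)$ reduces to $O(\tau(m))$ sums of the shape $\sum_{n\leq U}\Lambda(n)\widetilde{\lambda}_w(n)$, where $\widetilde{\lambda}_w$ is defined like $\lambda$ but attached to a grossencharacter of comparable conductor.

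Second, I would apply Theorem $2^{\psi}$ of \cite{FI} to each reduced sum, obtaining a bound of order $f(|k|+1)U^{76/77}$ for each. Internally, that theorem uses Vaughan's identity on $\Lambda$: the Type I pieces are handled by Poisson summation on the Gaussian lattice, exploiting oscillation in the angular frequency $k$ and cancellation in the Dirichlet character modulo $f$; the Type II bilinear pieces are the hard part, where the spin symbol $[z]$ is unfolded via quadratic reciprocity into a Kloosterman-like shape bounded by Weil's estimate. The exponent $76/77$ arises from balancing the Vaughan cut-off, while the factor $f(|k|+1)$ records the analytic conductor of the full twisted character. Summing over the $\tau(m) \ll m^{o(1)}$ choices of $w$ inflates the final estimate by at most a factor $m^{o(1)}$, which is absorbed into the linear factor $m$ stated in the lemma (the slack here gives us the latitude to be generous in the factorisation step).

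The only substantive obstacle is the Type II bilinear estimate for the spin symbol, and this is precisely the deep input that is already absorbed into Theorem $2^{\psi}$. The work in our proof thus reduces to the factorisation step and routine bookkeeping of the conductor parameters; no new analytic ingredient is needed.
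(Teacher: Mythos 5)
The paper does not actually prove this lemma: it is quoted directly, \emph{including} the $m$ parameter, as Theorem $2^{\psi}$ of Friedlander and Iwaniec \cite{FI}. Your premise that Theorem $2^{\psi}$ ``is essentially this statement in the case $m=1$'' is a misreading of the source. FI state Theorem 2 (and its twisted variant $2^{\psi}$) for all $m\ge 1$, with the linear $m$-dependence already built into the bound, precisely because the general case does not reduce cleanly to $m=1$. The reduction you propose is therefore unnecessary, and what follows in your write-up is a sketch of FI's own proof of the theorem rather than a derivation from a weaker statement.

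The reduction itself also has a genuine gap. Fixing a divisor $w$ with $N(w)=m$ and writing $z=wu$, the spin symbol satisfies a relation of the shape $[wu]=[w][u]\,\varepsilon(w,u)$ where $\varepsilon(w,u)$ is a product of Jacobi symbols that mix $\tReb(w),\tIm w$ with $\tReb(u),\tIm u$. For fixed $w$ this twist is evaluated modulo integers dividing $\tReb(w)$ or $\tIm w$, so its conductor is of size up to $\sqrt m$, not ``comparable'' to $f$ as you assert; the analytic conductor of the reduced problem grows with $m$. More seriously, a factor such as $\bfrac{\tIm u}{r}$ is not a multiplicative character on $\mathbb{Z}[i]$ --- indeed $\tIm(u_1u_2)=\tReb(u_1)\tIm u_2+\tIm u_1\tReb(u_2)$ does not factor --- so your $\widetilde{\lambda}_w$ is not of the form $\sum_{N(u)=n}\psi(u)[u]$ for any Hecke character $\psi$, and Theorem $2^{\psi}$ at $m=1$ simply does not apply to it. This is exactly why FI carry the parameter $m$ through their entire argument rather than reducing to $m=1$; the correct thing to do here is to cite their theorem as stated.
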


Note that in our situation $fk$ is at most a power of $\log x$.
The above result shows that
\begin{eqnarray*}
\sum_{m\le 2N^{1/100}}\sum_{\substack{p>\max(P(m),N^{99/100})\\ mp\in
    Q_j}}\lambda(mp)&\ll & f(|k|+1)\sum_{m\le 2N^{1/100}}m(N/m)^{76/77}\\
&\ll& f(|k|+1) N^{76/77+(78/77)/100}.
\end{eqnarray*}
Since $76/77+(78/77)/100<1$ this is satisfactory.

We now examine the terms in which every prime factor of $n$ is at most
$N^{99/100}$.  To do this, we first rewrite our sum in terms of bilinear sums.  Let $n=p_1\ldots p_{j+1}$, as in the description of
the set $Q_j$, and divide the range for each prime $p_i$ into intervals of the form $(P_i,2P_i]$.  This will give us at most $(2\log N)^{1+n_0}$
sets of dyadic ranges, and since $n_0\ll\delta^{-1}= (\log
x)^{1-\varpi}$ there will be $O(N^{\epsilon})$ sets of
ranges. Moreover we may suppose that
\[\prod_{i=1}^{j+1}P_i\ll N\ll 2^{j+1}\prod_{i=1}^{j+1}P_i.\]
Since
we may now assume that $P_1\le N^{99/100}$ there will be an index $u$
such that 
\[N^{1/100}\le\prod_{i=1}^u P_i\le N^{99/100}.\]
Fixing such an index we split $n$ as $n=ab$ with
\[a=\prod_{i=1}^up_i,\;\;\;\mbox{and}\;\;\; b=\prod_{i=u+1}^{j+1}p_i, \]
so that $a\le N_1$ and $b\le N_2$ with
\[N_1:=2^{1+n_0}\prod_{i=1}^uP_i\;\;\;\mbox{and}\;\;\; 
N_2:=2^{1+n_0}\prod_{i=u+1}^{j+1}P_i,\] 
and hence
\begin{equation}\label{nbs}
N_1N_2\ll N^{1+\epsilon}\;\;\;\mbox{and}\;\;\; N_1,N_2\ll
N^{99/100+\epsilon}.
\end{equation}
We will then have $N_1 N^{-\epsilon}\ll a\le N_1$, and similarly for $b$.
Our description of $Q_j$ may now be expressed by requiring
that $a\in Q_{j,u}$ and $b\in Q'_{j,u}$ for appropriate sets $Q_{j,u}$
and $Q'_{j,u}$, together with the conditions that $ab\in I = (N', N'(1+\omega)] \cap [Y, x^{1/2 - \delta})$, that $p_{j+1}^{-1}ab<Y$, and that $p_{u+1}<p_u$.  Specifically, we take
$$Q_{j, u} = \{a = p_1...p_u: p_i \in (P_i, 2P_i], p_u < ...<p_1 <Y\},
$$and
$$Q_{j, u}' = \{b = p_{u+1}...p_{j+1}: p_i \in (P_i, 2P_i], p_{j+1} \in J, p_{j+1} < ...<p_{u+1} <Y\}.
$$


In order to separate the variables $a$ and $b$ completely we subdivide
the available ranges for $a,b,p_{j+1},p_u$ and $p_{u+1}$ into intervals of
the shape $(A,A+A/L]$, $(B,B+B/L]$, $(P_{j+1}',P_{j+1}'+P_{j+1}'/L]$, $(P_u',P_u'+P_u'/L]$
and $(P_{u+1}',P_{u+1}'+P_{u+1}'/L]$. Here the parameter $L$ will be a
small power of $N$.  The reader should note that we do not insist
that each of these intervals should have length 1 or more.  Indeed
such an interval may not contain any integers at all. There will be
$O(L^5(\log x)^2)$ collections of such intervals.  There will be some
for which the conditions $ab\in I$, $p_{j+1}^{-1}ab<Y$, and $p_{u+1}<p_u$
hold for every choice of $p_1,\ldots,p_{j+1}$ satisfying
\[a\in(A,A+A/L],\;\;\; b\in(B,B+B/L]\]
\[p_{j+1}\in(P_{j+1}',P_{j+1}'+P_{j+1}'/L],\;\;\; p_u\in (P_u',P_u'+P_u'/L],\;\;\;
p_{u+1}\in(P_{u+1}',P_{u+1}'+P_{u+1}'/L],\]
and
\[p_i\in I_i\;\;\;(i\not=1,u,u+1).\]
In this case the corresponding subsum is
\begin{equation}\label{eqn:largeprimebilinearsum}
\sum_{\substack{a\in Q_{j,u}\cap(A,A+N_1/L]\\
     p_u\in (P_u',P_u'+P_u'/L]}}
\;\;\sum_{\substack{b\in Q'_{j,u}\cap(B,B+B/L]\\
    p_{j+1}\in(P_{j+1}',P_{j+1}'+P_{j+1}'/L]\\ 
p_{u+1}\in(P_{u+1}',P_{u+1}'+P_{u+1}'/L]}}\lambda(ab),
\end{equation}
so that we have separated the variables $a$ and $b$.  For such sums we
can apply the following consequence of Friedlander
and Iwaniec \cite[Proposition 23.1]{FI}.
\begin{lem} \label{lem:JKbilinear}
Let $\mathfrak d_1(m)$ and $\mathfrak d_2(n)$ be bounded arithmetic
functions supported on $1\leq m \leq N_1$ and $1\leq n \leq N_2$
respectively.   Then
\begin{equation}
\sum_{m, n} \mathfrak d_1(m) \mathfrak d_2(n) \lambda(mn)
\ll (N_1+N_2)^{\frac{1}{12}} (N_1N_2)^{\frac{11}{12}+\epsilon}.
\end{equation}
\end{lem}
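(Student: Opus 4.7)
The plan is to deduce Lemma \ref{lem:JKbilinear} from Proposition 23.1 of Friedlander--Iwaniec \cite{FI}, which provides a bilinear form bound for Jacobi-symbol sums over Gaussian integers of exactly the required strength. First I would unfold $\lambda(mn)$ using Lemma \ref{lem:elefac}: each primitive Gaussian integer $z$ with $N(z) = mn$ admits a unique (up to units) factorization $z = wu$ with $N(w) = m$ and $N(u) = n$, which becomes a genuine bijection once I normalize both $w$ and $u$ to have positive odd real parts. Since the coefficients $\mathfrak{d}_1$ and $\mathfrak{d}_2$ inherit from the setup of Section \ref{section:bilinearpf1} the property that their support sits on integers all of whose prime factors exceed $x^\delta$, the contribution of imprimitive pairs $z = wu$ is negligible, by the same estimate used to handle imprimitive $\bar w z$ just after \eqref{eqn:bilinear2}.

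After this unfolding, the integrand $\chi(z)(z/|z|)^k[z]$ separates into $\chi(w)\chi(u)$ (since $\chi$ is multiplicative) times $(w/|w|)^k(u/|u|)^k$ (since $\arg$ is additive) times $[wu]$. The last factor is not multiplicative, but the quadratic-reciprocity manipulation carried out in Section \ref{sec:larged} shows that
\[ [wu] = \eta(w,u)\,[w]\,[u]\,\bfrac{\tReb w}{\tReb u}\bfrac{\tReb u}{\tReb w}, \]
where $\eta(w,u) = \pm 1$ depends only on $w$ and $u$ modulo $8$. Splitting the sum into $O(1)$ residue classes modulo $8$ and absorbing all multiplicative weights into new coefficients $\alpha'_w, \beta'_u$ with $|\alpha'_w|, |\beta'_u| \le 1$, the sum reduces to a bilinear form in Gaussian integers with a Jacobi-symbol kernel $\bfrac{\tReb w}{\tReb u}\bfrac{\tReb u}{\tReb w}$, supported on $N(w) \le N_1$ and $N(u) \le N_2$.

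This is precisely the configuration treated by Proposition 23.1 of \cite{FI}, which yields the bound $(N_1+N_2)^{1/12}(N_1 N_2)^{11/12+\epsilon}$. The main obstacle in executing the plan will be the bookkeeping for the reciprocity step: one must carefully track the $2$-adic contributions, the four-fold associate ambiguity in Lemma \ref{lem:elefac}, and the normalization that $\tReb w$ and $\tReb u$ are positive and odd, so that after all of these reductions the resulting sum fits the hypotheses of Friedlander--Iwaniec's proposition with bounded coefficients. The $\epsilon$ losses in the final bound reflect only the partitions (into associates, into residue classes modulo $8$, and in dyadic localization of $N(w), N(u)$), each of which costs at most a power of $\log x$ and hence is absorbed into $(N_1 N_2)^\epsilon$.
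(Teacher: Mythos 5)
The paper treats Lemma~\ref{lem:JKbilinear} as an essentially direct citation: FI's Proposition~23.1 is already stated as a bilinear form estimate for $\sum_{m,n}\alpha_m\beta_n\lambda(mn)$, with $\lambda(n)=\sum_{N(z)=n}\psi(z)[z]$ their Gaussian weight (the variant with the archimedean twist $(z/|z|)^k$ and a Dirichlet character $\chi$ of small modulus is covered by their Theorem~$2^\psi$ apparatus). The only adaptation needed is to observe that the present $\lambda$ fits that framework with $f$ and $|k|$ bounded by powers of $\log x$, and that general ranges $N_1,N_2$ can be covered dyadically. No unfolding of $\lambda(mn)$ into Gaussian factors is required.

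Your plan goes in the opposite direction: you propose to unfold $\lambda(mn)$ via Lemma~\ref{lem:elefac}, restore a Jacobi-symbol kernel by reciprocity, and only then invoke Proposition~23.1 as though it were a Jacobi-symbol bilinear estimate. That effectively re-derives FI's Sections~17--23 from scratch rather than citing their output. Moreover, the explicit identity you posit,
\[
[wu]=\eta(w,u)\,[w]\,[u]\,\bfrac{\tReb w}{\tReb u}\bfrac{\tReb u}{\tReb w},
\]
with $\eta(w,u)=\pm1$ depending only on $w,u\bmod 8$, is not correct. Since $\bfrac{\tReb w}{\tReb u}\bfrac{\tReb u}{\tReb w}=(-1)^{(\tReb w-1)(\tReb u-1)/4}$ is itself determined by $w,u\bmod 8$, your formula would force $[wu]/([w][u])$ to depend only on residue classes mod~$8$, which is false: the correction factor in FI's Lemma~17.1 is a genuine Jacobi symbol built from the (coprime) arithmetic data of $w$ and $u$, not just a sign coming from $2$-adic information. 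You would also need to account for the unit multiplying $wu$ that restores a positive odd real part, and the passage through the regions where $wu$ fails to be primitive (in a $\lambda$-sum, not just in the sieve coefficients). So as a ``plan'' the reduction step would not close without reproducing FI's Lemma~17.1 and its downstream bookkeeping, and even then it only re-proves a result that the paper simply imports.
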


Since $A\le N_1$ and $B\le N_2$ this gives us a bound
\[\ll (N^{99/100})^{1/12}N^{11/12+\epsilon}=N^{1-1/1200+\epsilon},\]
in view of \eqref{nbs}.  Since there are $O(L^5N^{\epsilon})$ such
subsums the overall contribution is $O(L^5N^{1-1/1200+\epsilon})$.

It remains to consider the contribution from ``bad'' sets of ranges which are
not excusively contained in the region given by $ab\in I$,
$p_{j+1}^{-1}ab<Y$, and $p_{u+1}<p_u$. Suppose that the interval $I$ is
given by $I=[e_1,e_2]$, for example, and that there are integers
$a,a'\in(A,A+A/L]$ and $b,b'\in(B+B/L]$ for which $ab\in I$ but
$a'b'\not\in I$. Then we must have $ab=(1+O(L^{-1}))e_1$ or
$ab=(1+O(L^{-1}))e_2$. We now consider the total contribution from
integers $n\in Q_j$ for all such ``bad'' choices of intervals
$(A,A+A/L]$, $(B,B+B/L]$, $(P_{j+1}',P_{j+1}'+P_{j+1}'/L]$, $(P_u',P_u'+P_u'/L]$
and $(P_{u+1}',P_{u+1}'+P_{u+1}'/L]$.  Since each integer $n$ occurs
at most once, and $\lambda(n)=O(\tau(n))$, the contribution will be
\[\ll \sum_{n=(1+O(L^{-1}))e_i} \tau(n)\ll N^{1+\epsilon}L^{-1}.\]
Similarly, if we have $p_{j+1}^{-1}ab<Y$ but ${p_{j+1}'}^{-1}a'b'\ge Y$, then
$p_{j+1}^{-1}ab=(1+O(L^{-1}))Y$.  Now $P_{j+1}Y\asymp AB\le N_1N_2\ll
N^{1+\epsilon}$, so any $n$ which is to be counted will have a prime
factor $p\ll N^{1+\epsilon}/Y$ such that $p^{-1}n=(1+O(L^{-1}))Y$.
Thus, writing $n=pm$, we see that the total contribution in this case is
\[\ll \sum_{p\ll N^{1+\epsilon}/Y}\;\sum_{m=(1+O(L^{-1}))Y}\tau(pm)
\ll N^{1+\epsilon}Y^{-1}(1+L^{-1}Y)\ll N^{1+\epsilon}L^{-1},\]
for $L\le Y$.

Lastly, if $P_u=P_{u+1}$, then it may happen that the condition 
$p_{u+1}<p_u$ is satisfied by
some, but not all, pairs of primes $(p_u,p_{u+1})$ from the intervals
$(P_u',P_u'+P_u'/L]$ and $(P_{u+1}',P_{u+1}'+P_{u+1}'/L]$. Clearly
this problem cannot arise when $L\ge 2P_u$ since then the intervals
$(P_u',P_u'+P_u'/L]$ and $(P_{u+1}',P_{u+1}'+P_{u+1}'/L]$ contain at
most one prime each. It follows that any $n$ to be counted in this
case will have two prime factors $p'>p\ge P_u\ge L/2$ with
$p'=(1+O(L^{-1}))p$. Hence the corresponding contribution is
\[\ll\sum_{\substack{p'>p\ge L/2\\  p'=(1+O(L^{-1}))p}}\;
\sum_{\substack{n\ll N\\ p'p\mid n}}\tau(n)
\ll\sum_{\substack{p'>p\ge L/2\\  p'=(1+O(L^{-1}))p}}
\frac{N^{1+\epsilon}}{p'p}\ll N^{1+\epsilon}L^{-1}.\]

We therefore find that our sum is
\[\ll L^5N^{1-1/1200+\epsilon}+N^{1+\epsilon}L^{-1}\]
if $L\le Y$.
We may then choose $L=N^{1/10000}$ for example, to achieve the claimed
power saving.

\subsection{Small $d$}
To handle small $d$ it will be enough to show for any
$f\leq F$, $d\leq D_1$, and any non-principal $\chi\modd{d}$, that 
\begin{equation}
\sumstar_{c\modd{df}}\sumb_{\substack{(z_1,z_2)\in\U_1\times\U_2\\ cz_2\equiv
      z_1\modd{df}}} 
\beta_{z_1}\beta_{z_2}\chi(c) \ll_C \frac{N^2}{\log^C x},
\end{equation}
for every $C>0$. Since 
\[\sumstar_{c\modd{df}} \chi(c) = 0,\]
it suffices to prove that if $\U=\U_1$ or $\U_2$ then
there is an $\mathfrak M = \mathfrak M(\U, df)$ such that
\begin{equation}\label{eqn:smalldmain}
\sum_{\substack{z\in\U\\ z \equiv \alpha \modd{df}}} 
\beta_z = \mathfrak M + O\bfrac{N}{\log^C x},
\end{equation}
for any $(\alpha,df)=1$, and any $C>0$.

As in Section \ref{sec:larged}, we may assume that
$\beta_z=\beta_{N(z)}$, where $\beta_n$ is the
indicator function of either $Q_j$ or $R$. We describe the procedure
for $Q_j$, the method for $R$ being similar.  We decompose $z$ as $z_1z_2$
with $N(z_1)$ being the largest prime factor of $N(z_1z_2)$. The
requirement that $n\in Q_j$ is then equivalent to a condition of the
form $N(z_2)\in
Q_j'$ together with a restriction of the type $N(z_1)\in I(z_2)$ for
some real interval $I(z_2)$. Specifically we have
\[Q_j'=\{p_2...p_{j+1}: p_{j+1} \in J,\; p_{j+1} < ... < p_2\}\]
and
\[I(z_2)=(p_2,Y)\cap\left(\frac{N'}{N(z_2)}\,,\,
\frac{N'(1+\omega)}{N(z_2)}\right]
\cap\left[\frac{Y}{N(z_2)}\,,\,\frac{x^{1/2-\delta}}{N(z_2)}\right),\]
where $p_2$ is the largest prime factor of $N(z_2)$.  When $\U$ is given
by \eqref{Uf} the condition on the size of $N(z_1z_2)$ is exactly the
condition
\[N(z_1)\in\left(\frac{N'}{N(z_2)}\,,\,\frac{N'(1+\omega)}{N(z_2)}\right],\]
and we have $\theta_0<\arg z<\theta_0+\omega_2$ exactly when
\[\theta_1(z_2)<\arg z_1<\theta_1(z_2)+\omega_2,\]
with $\theta_1(z_2)=\theta_1-\arg z_2$.

It follows that
\begin{equation}\label{sN}
\sum_{\substack{z\in\U\\ z \equiv \alpha \modd{df}}} \beta_z
=\sum_{\substack{z_2\in Q_j'\\ (z_2,df)=1}}\N(z_2,\alpha),
\end{equation}
where $\N(z_2,\alpha)$ is the number of Gaussian integers $z_1$ satisfying
\[z_1 \equiv \alpha\overline{z_2} \modd{df},\;\;\; N(z_1)\in  I(z_2),
\;\;\;\mbox{and} \;\;\; \theta_1(z_2)<\arg z_1<\theta_1(z_2)+\omega_2,\]
and for which $N(z_1)$ is prime.  Here $\overline{z_2}$ is the inverse
of $z_2$ modulo $df$.

We can estimate $\N(z_2,\alpha)$ using a
form of the Prime Number Theorem for arithmetic progressions, over
number fields.
Given $q\in\mathbb{N}$, any Gaussian integer $\alpha$ coprime to $q$,
and any $\theta\in[0,2\pi]$ write $\pi(x;q,\alpha,\theta)$ for the
number of Gaussian primes $\mu\equiv\alpha\modd{q}$ of norm at most $x$
and with $0\le\arg(\mu)\le \theta$.  The principal result of Mitsui
\cite{mitsui} tells us that there is an absolute constant $c$ such
that
\begin{equation}\label{eqn:Mitsui}
\pi(x;q,\alpha,\theta)=\frac{4}{\phi_{\mathbb{Q}(i)}(q)}
\frac{\theta}{2\pi}{\rm Li}(x)+O_A(x\exp(-c\sqrt{\log x}))
\end{equation}
uniformly for all $\theta\in[0,2\pi]$ and all $q\le(\log x)^A$.  Here
$\phi_{\mathbb{Q}(i)}(q)$ is the Euler $\phi$-function for the
Gaussian integers.

We now apply \eqref{eqn:Mitsui} to estimate $\N(z_2,\alpha)$.  We have
$I(z_2)\subseteq(0,2N/N(z_2)]$, and so we will need to know that
$df\le (\log 2N/N(z_2))^A$ for some constant $A$.  However we
recall that if $p$ divides an element of $Q_j$ then
one has $p\ge x^{\delta}$ with $\delta=(\log x)^{\varpi-1}$.  Thus we
will have $2N/N(z_2)\ge x^{\delta}$ so that $df\le (\log
2N/N(z_2))^{C/\varpi}$ whenever $df\le(\log x)^C$.  The required
condition is therefore satisfied when $f\le F$ and $d\le D_1$.

We therefore find that
\[\N(z_2,\alpha)=\mathfrak{M}(z_2,df,j,\U)+
O\left(\frac{N}{N(z_2)}\exp(-c(\log x)^{\varpi/2})\right),\]
where the main term $\mathfrak{M}(z_2,df,j,\U)$ is, crucially,
independent of $\alpha$. If we feed this into \eqref{sN} we then
obtain the desired estimate \eqref{eqn:smalldmain}.  This completes
our treatment of small $d$.

\section{Distribution of sequences in arithmetic 
progressions}\label{section:BDH}
The purpose of this final section is to prove Corollary \ref{cor:SWsequence}
and the more general Theorem \ref{thm:2} below.  Theorem 2 is
motivated by the possibility that an interesting arithmetic function
can be biased for certain small moduli --- that is, the sequences does
not satisfy the Siegel--Walfisz condition.  In this case, we can still
prove a result about the distribution of such a sequence in arithmetic
progressions by adjusting the main term.  

We first fix some notation for the rest of the section: for any
arithmetic function $a(n)$ with finite support, we let  
\begin{equation}\label{eqn:ataudef}
\|a \tau\|^2 = \sum_{n} |a(n)|^2 \tau(n)^2.
\end{equation}
For arithmetic functions $c_1$ and $c_2$, we shall see that $\|c_1
\tau\|^2\|c_2 \tau\|^2$ is an upper bound for $\|c_1*c_2\|^2$.   

Let $\ga(n)$ and $\de(n)$ be arithmetic functions supported on $n\leq
N_1$ and $n\leq N_2$ respectively and let $Q_0\geq 1$.  For a
character $\chi$, let $Q(\chi)$ be the conductor of the unique
primitive character which induces $\chi$.  Now, 
let 
\begin{equation}
S(a, q) = \sum_{\substack{n_1\equiv an_2 \modd{q}
\\(n_1n_2, q)=1}} \ga(n_1)\de(n_2), 
\end{equation}
\begin{equation}
\M(a, q) = \frac{1}{\phi(q)} \sum_{n_1,
  n_2}\ga(n_1)\de(n_2)\sum_{\substack{\chi \modd{q} \\ Q(\chi) \leq
    Q_0}} \chi(n_1) \overline{\chi(an_2)} 
\end{equation}and
\begin{equation}
\E(a, q) = S(a, q) - \M(a, q).
\end{equation}Note that $\M(a, q)$ is the expected main term for
$S(a, q)$.  The main result of this section is below. 

\begin{thm}\label{thm:2}
For any $Q \in \mathbb{N}$, let
\begin{equation}
\E = \sum_{q \leq Q}\; \sumstar_{a \modd{q}} |\E(a, q)|^2.
\end{equation}
Then,
\begin{equation}
\E \ll \left(Q+\frac{N_1N_2}{Q_0}\right)
(\log Q) \|\ga\tau \|^2 \|\de\tau \|^2. 
\end{equation}
\end{thm}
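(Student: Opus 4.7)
The plan is to handle Theorem~\ref{thm:2} via a character decomposition followed by the multiplicative large sieve. For $(a,q)=1$ I would first expand using orthogonality of Dirichlet characters modulo $q$ to obtain
\[\mathcal E(a,q)=\frac{1}{\phi(q)}\sum_{\substack{\chi\pmod q\\ Q(\chi)>Q_0}}\overline{\chi}(a)\,A(\chi)\,B(\overline{\chi}),\]
where $A(\chi)=\sum_{n_1}\gamma(n_1)\chi(n_1)$ and $B(\chi)=\sum_{n_2}\delta(n_2)\chi(n_2)$. Applying the orthogonality relation $\sumstar_{a\pmod q}\overline{\chi_1}(a)\chi_2(a)=\phi(q)\mathbf 1[\chi_1=\chi_2]$ then gives
\[\sumstar_{a\pmod q}|\mathcal E(a,q)|^2=\frac{1}{\phi(q)}\sum_{\substack{\chi\pmod q\\ Q(\chi)>Q_0}}|A(\chi)B(\overline{\chi})|^2,\]
reducing $\mathcal E$ to a sum of squared character transforms restricted to conductors exceeding $Q_0$.

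The second step is to recognise $A(\chi)B(\overline{\chi})$ as a single character sum in disguise. Taking complex conjugates gives $|B(\overline{\chi})|^2=|\widetilde B(\chi)|^2$, where $\widetilde B(\chi)=\sum_n\overline{\delta(n)}\chi(n)$, so
\[|A(\chi)B(\overline{\chi})|^2=|A(\chi)\widetilde B(\chi)|^2=\Bigl|\sum_k h(k)\chi(k)\Bigr|^2,\quad h:=\gamma*\overline{\delta},\]
is a genuine Dirichlet character sum supported on $k\le N_1N_2$. Using Cauchy--Schwarz together with the divisor bound $\tau(n_1n_2)\le\tau(n_1)\tau(n_2)$ one obtains the crucial inequality $\|h\|^2\le\|\gamma\tau\|^2\|\delta\tau\|^2$, so the task becomes that of bounding
\[\sum_{q\le Q}\frac{1}{\phi(q)}\sum_{\substack{\chi\pmod q\\ Q(\chi)>Q_0}}|H_q(\chi)|^2,\qquad H_q(\chi):=\sum_{(k,q)=1}h(k)\chi(k).\]

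For the final step I would parametrise by primitive characters $\chi^*$ of conductor $d>Q_0$ inducing $\chi$ modulo $q=dm$ with $m\le Q/d$. Since $\chi^*$ vanishes on multiples of $d$, the only residual condition is $(k,m)=1$, and this is where the main technical obstacle lies: $H_{dm}(\chi^*)$ genuinely depends on $m$, and naively bounding it by $|H_d(\chi^*)|$ destroys cancellation. Möbius inversion $\mathbf 1[(k,m)=1]=\sum_{e\mid(k,m)}\mu(e)$ combined with Cauchy--Schwarz converts $|H_{dm}(\chi^*)|^2$ into $\tau(m)\sum_{e\mid m}|G_e(\chi^*)|^2$ with $G_e(\chi^*)=\sum_{k'}h(ek')\chi^*(k')$, after which a dyadic application of the multiplicative large sieve
\[\sum_{d\sim D}\frac{d}{\phi(d)}\sumstar_{\chi^*\pmod d}|G_e(\chi^*)|^2\ll\Bigl(D^2+\frac{N_1N_2}{e}\Bigr)\|h_e\|^2\]
over ranges $Q_0<D\le Q/e$, together with control of the arithmetic weights $\sum_{m:e\mid m}\tau(m)/\phi(dm)$ by standard Rankin-type estimates, delivers the required bound $(Q+N_1N_2/Q_0)(\log Q)\|\gamma\tau\|^2\|\delta\tau\|^2$.
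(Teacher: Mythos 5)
Your first three steps coincide with the paper's proof: open by orthogonality in $a$ to get $\sumstar_{a}|\E(a,q)|^2 = \phi(q)^{-1}\sum_{Q(\chi)>Q_0}|G(\chi)\overline{D(\chi)}|^2$, recognise $G(\chi)\overline{D(\chi)}$ as $\sum_k h(k)\chi(k)$ with $h=\gamma*\overline{\delta}$, and bound $\|h\|^2\le\|\gamma\tau\|^2\|\delta\tau\|^2$ by Cauchy and $\tau(n_1n_2)\le\tau(n_1)\tau(n_2)$. The divergence is in your treatment of the coprimality condition when you pass to primitive characters, and here you have talked yourself into a detour you do not need. You correctly observe that one cannot replace $H_{dm}(\chi^*)=\sum_{(k,m)=1}h(k)\chi^*(k)$ by $H_d(\chi^*)$, but the remedy is not M\"obius inversion: it is simply to feed the restricted coefficients $h_m(k):=h(k)\mathbf 1[(k,m)=1]$ into the large sieve for each fixed $m$. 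These coefficients satisfy $\|h_m\|\le\|h\|$ trivially (restriction of support cannot increase the $\ell^2$-norm), so a dyadic application of the multiplicative large sieve over $Q_0<d\le Q/m$ gives
\[\sum_{Q_0< d\le Q/m}\frac{1}{\phi(d)}\sumstar_{\psi\modd{d}}\Bigl|\sum_{(k,m)=1}h(k)\psi(k)\Bigr|^2\ll\Bigl(\frac{Q}{m}+\frac{N_1N_2}{Q_0}\Bigr)\|h\|^2,\]
uniformly in $m$, and the final sum $\sum_{m\le Q}\phi(m)^{-1}\ll\log Q$ produces exactly the single logarithm in the theorem. There is no cancellation to preserve at that point and no need to expand $\mathbf 1[(k,m)=1]$.

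Your M\"obius--plus--Cauchy route is not wrong in spirit, but as sketched it does not close to the stated bound. After $|H_{dm}(\chi^*)|^2\le\tau(m)\sum_{e\mid m}|G_e(\chi^*)|^2$, the weight $\sum_{m\le Q/d,\,e\mid m}\tau(m)/\phi(dm)$ contributes $\asymp(\log Q)^2$ rather than $\log Q$ (because of the $\sum_{m'}\tau(m')/m'$ factor), and the subsequent sum $\sum_e \tau(e)\phi(e)^{-1}\|h_e\|^2$ with $\|h_e\|^2=\sum_{k'}|h(ek')|^2$ is not obviously $\ll\|h\|^2$ without further work. You would need to be considerably more careful with these ``Rankin-type estimates'' to avoid degrading $(\log Q)$ to a higher power, and this extra bookkeeping is precisely what the paper's cleaner observation sidesteps.
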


\subsection{Proof of Theorem \ref{thm:2}}
We first prove a consequence of the large sieve which lies at the heart
of our result. 
\begin{prop}\label{proplargesieve}
Let $a(n)$ be any arithmetic function supported on $n\leq N$, and for
any character $\chi$, let 
\begin{equation}
A(\chi) = \sum_n a(n) \chi(n).
\end{equation}Further, let $Q(\chi)$ denote the conductor of the
unique primitive character that induces $\chi$.  Then 
\begin{equation}
S := \sum_{h\leq H} \frac{1}{\phi(h)} \sum_{\substack{\chi \modd{h}
\\ Q(\chi) > h_0}} |A(\chi)|^2  
\ll \left(H + \frac{N}{h_0}\right) (\log H)\|a\|^2.
\end{equation}
\end{prop}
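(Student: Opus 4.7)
The plan is to reduce the sum over imprimitive characters to one over primitive characters, so that the classical large sieve inequality can be applied. Each $\chi\pmod{h}$ with conductor $q=Q(\chi)>h_0$ is induced by a unique primitive $\chi^*\pmod{q}$; writing $h=qr$, M\"obius inversion on the coprimality condition $(n,r)=1$ built into $\chi(n)=\chi^*(n)\mathbbm{1}_{(n,r)=1}$ yields
\[A(\chi)=\sum_{d\mid r}\mu(d)\chi^*(d)A_d(\chi^*),\qquad A_d(\chi^*):=\sum_m a(dm)\chi^*(m).\]
Cauchy--Schwarz then gives $|A(\chi)|^2\le\tau(r)\sum_{d\mid r}|A_d(\chi^*)|^2$. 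Substituting this into $S$, using $\phi(qr)\ge\phi(q)\phi(r)$ and reindexing $r=de$, one majorizes $S$ by a double sum in $d$ and $q$ at the primitive level, weighted by a residual multiplicative $e$-sum $\sum_e\tau(de)/\phi(qde)$ that contributes only log factors.

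The main analytic input is the large sieve for primitive characters combined with a dyadic decomposition in $q$. For $q\in(Q,2Q]$ one uses $1/\phi(q)\le(2/Q)\cdot q/\phi(q)$ to convert the weight into the shape required by
\[\sum_{Q<q\le 2Q}\frac{q}{\phi(q)}\sumstar_{\chi\bmod q}|A_d(\chi)|^2\ll\Bigl(Q^2+\frac{N}{d}\Bigr)\|a_d\|^2,\]
where $\|a_d\|^2=\sum_m|a(dm)|^2$. Dividing by $Q$ and summing the resulting geometric series over dyadic scales $h_0\le Q\le H/d$ produces the essential primitive-level estimate
\[\sum_{h_0<q\le H/d}\frac{1}{\phi(q)}\sumstar_{\chi\bmod q}|A_d(\chi)|^2\ll\Bigl(\frac{H}{d}+\frac{N}{h_0d}\Bigr)\|a_d\|^2,\]
with no log loss from the geometric summation itself.

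Inserting this back bounds $S$ by $(H+N/h_0)(\log H)^{O(1)}\sum_d\tau(d)^{O(1)}/(d\phi(d))\|a_d\|^2$. Collapsing the $d$-sum is the final step: unfold $\|a_d\|^2=\sum_{n:\,d\mid n}|a(n)|^2$, swap orders, and observe that the inner divisor sum $\sum_{d\mid n}\tau(d)^{O(1)}/(d\phi(d))$ defines a uniformly bounded multiplicative function of $n$, so the whole expression reduces cleanly to $O(\|a\|^2)$ and yields the desired bound.

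The principal obstacle is keeping the logarithmic losses tight: both the residual $e$-sum in the majorant and the dyadic summation over $q$ are potential sources of log factors, and a naive execution can easily cost more than the single $\log H$ claimed. A cleaner fallback, if the direct approach loses too much, is the splitting $A(\chi)=A(\chi^*)+(A(\chi)-A(\chi^*))$: the principal term $A(\chi^*)$ delivers the target bound immediately from the large sieve together with the elementary estimate $\sum_{r\le H/q}1/\phi(qr)\ll(\log H)/\phi(q)$, leaving only the remainder --- where the M\"obius expansion is restricted to $d\ge 2$ --- to be handled by the more delicate Cauchy--Schwarz plus large sieve scheme above.
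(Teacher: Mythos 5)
Your overall strategy (reduce to primitive characters, apply the large sieve in dyadic ranges, collapse the resulting divisor sum) is the same as the paper's, but your route through the reduction step is noticeably more involved and, as written, does not quite deliver the single factor of $\log H$ in the statement.

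The paper sidesteps the M\"obius expansion entirely. Writing $h=h_1h_2$ with $\psi\modd{h_1}$ the primitive character inducing $\chi$, it observes that $A(\chi)$ is exactly $A(\psi,h_2)=\sum_{(n,h_2)=1}a(n)\psi(n)$, i.e.\ the same kind of character sum but for the truncated sequence $a(n)\mathbbm{1}_{(n,h_2)=1}$, whose $\ell^2$-norm is trivially at most $\|a\|$. One then bounds $1/\phi(h_1h_2)\le 1/(\phi(h_1)\phi(h_2))$, applies the large sieve for primitive characters dyadically in $h_1$ for each \emph{fixed} $h_2$ to get $\ll (H/h_2+N/h_0)\|a\|^2$, and finally sums over $h_2$: the $H/h_2$ piece converges and the $N/h_0$ piece contributes the single $\log H$ from $\sum_{h_2\le H}1/\phi(h_2)$. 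No $\tau$-weights, no Cauchy--Schwarz over divisors, no $d$-collapse.

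By contrast, your main scheme first expands $\mathbbm{1}_{(n,r)=1}$ by M\"obius and then applies Cauchy--Schwarz to bound $|A(\chi)|^2\le\tau(r)\sum_{d\mid r}|A_d(\chi^*)|^2$. That $\tau(r)$ factor, once you reindex $r=de$, produces $\sum_{e\le H/(qd)}\tau(de)/\phi(de)$, and with $\tau(de)\le\tau(d)\tau(e)$ and $\phi(de)\ge\phi(d)\phi(e)$ the residual $e$-sum is $\sum_e\tau(e)/\phi(e)\ll(\log H)^2$, not $\log H$. Your own write-up flags this: the final line carries a $(\log H)^{O(1)}$, which is weaker than the claim. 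Your fallback --- splitting off $A(\chi^*)$ --- handles the principal term cleanly with a single $\log$, but the remainder (the M\"obius expansion restricted to $d\ge 2$) still goes through the same Cauchy--Schwarz step and inherits the same $(\log H)^2$ loss; restricting to $d\ge 2$ reduces the constant, not the power of $\log$. So as it stands there is a genuine gap in the log-counting. The cleanest fix is precisely the paper's observation: do not expand the coprimality condition at all, but absorb it into the coefficient sequence, which the large sieve is perfectly happy to accept since it is applied for each fixed $h_2$ separately.
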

\begin{proof}
For each non-principal character $\chi \modd{h}$,
let $\psi \modd{h_1}$ be the unique primitive character which induces
$\chi$, where we may write $h = h_1h_2$ for $h_1>1$.  Then for any
$n$, $\chi(n) = \psi(n)$ if $(n, h_2) = 1$ and $\chi(n) = 0$
otherwise.  Let us write  
\begin{equation}
A(\psi, h_2) = \sum_{\substack{n\\ (n, h_2) = 1}} a(n)\psi(n),
\end{equation}
whence
\begin{align} \label{eqn:S1}
S \leq \sum_{\substack{h_1 h_2 \leq H\\ h_1\geq h_0}}
 \frac{1}{\phi(h_2)} \frac{1}{\phi(h_1)}\sumstar_{\psi \modd{h_1}} |A(\psi, h_2)|^2,
\end{align}where $\sumstar_{\psi \modd{h_1}}$ denotes a sum over all
primitive characters modulo $h_1$.  Applying the multiplicative large
sieve (see e.g. (9.52) in \cite{FIO}) in dyadic ranges, we see that 
\begin{equation}
 \sum_{h_0\le h_1 \le H/h_2}
\frac{1}{\phi(h_1)} \sumstar_{\psi \modd{h_1}} |A(\psi, h_2)|^2 \ll
\left(\frac{H}{h_2} + \frac{N}{h_0}\right) \|a\|^2, 
\end{equation}whence
\begin{align*}
S &\ll \sum_{h_2\leq H} \frac{1}{\phi(h_2)} \left(\frac{H}{h_2} +
  \frac{N}{h_0}\right) \|a\|^2 \notag \\ 
&\ll \left(H + \frac{N\log H}{h_0}\right) \|a\|^2. 
\end{align*}
The Proposition then follows.
\end{proof}

We now proceed to the proof of Theorem \ref{thm:2}.
\begin{proof}
We have
\begin{align*}
\E(a, q)
&= \frac{1}{\phi(q)} \sum_{\substack{\chi \modd{q} \\
    Q(\chi)>Q_0}} G(\chi)\overline{D(\chi)} \overline{\chi(a)}, 
\end{align*}where
\begin{equation}
G(\chi) = \sum_{n} \ga(n)\chi(n),
\end{equation}and
\begin{equation}
D(\chi) = \sum_{n} \overline{\de(n)}\chi(n).
\end{equation}
Then
\begin{align}
\E &= \sum_{q\leq Q}\frac{1}{\phi(q)^2}
\sumstar_{a\modd{q}} \left|\sum_{\substack{\chi \modd{q} \\ Q(\chi)>Q_0}}
  G(\chi)\overline{D(\chi) \chi(a)}\right|^2 \notag \\ 
  &= \sum_{q\leq Q}\frac{1}{\phi(q)}
\sum_{\substack{\chi \modd{q} \\ Q(\chi)>Q_0}}
    \left|G(\chi)\overline{D(\chi) }\right|^2 \notag \\ 
&\ll \left(Q + \frac{N_1N_2}{Q_0}\right)(\log Q)\|a\|^2 ,
\end{align}
where
\begin{equation}
a(n) = \sum_{n = n_1n_2} \ga(n_1)\overline{\de(n_2)},
\end{equation}
and where we have used Proposition \ref{proplargesieve}
with $a(n)$ supported on $n\leq N_1N_2$. 

However Cauchy's inequality yields
\[|a(n)|^2\le \tau(n)\sum_{n = n_1n_2} |\ga(n_1)\de(n_2)|^2
\le \sum_{n = n_1n_2} |\ga(n_1)\de(n_2)|^2\tau(n_1)\tau(n_2),\]
whence
\[\|a\|^2\le \sum_{n_1,n_2}|\ga(n_1)\de(n_2)|^2\tau(n_1)\tau(n_2)
\le \|\ga\tau \|^2 \|\de\tau \|^2.\]
It follows that
\begin{align}
\E
&\ll \left(Q+\frac{N_1N_2}{Q_0}\right)(\log Q) \|\ga\tau \|^2 \|\de\tau \|^2, 
\end{align}as desired.

\end{proof}

\subsection{Proof of Corollary \ref{cor:SWsequence}}

We have 
\begin{equation}
S(x; a, q) = \frac{1}{\phi(q)} \sum_{\chi \modd{q}}
\sum_{n_1, n_2} \chi(n_1)\overline{\chi(an_2)}c_1(n_1)c_2(n_2). 
\end{equation}

Then, taking $Q_0 = (\log x)^{A + 1}$, we see that
\begin{eqnarray*}
\lefteqn{ |S(x; a, q) - S(x; q)|^2}\\
&=&  \left| \frac{1}{\phi(q)} \sum_{\substack{\chi \modd{q} 
\\ \chi\not=\chi_0}}\sum_{n_1, n_2}
  \chi(n_1)\overline{\chi(an_2)} c_1(n_1)c_2(n_2)\right|^2\\ 
&= & \left|\E(a, q) + E(a, q, Q_0)\right|^2\\
&\ll& |\E(a, q)|^2 + |E(a, q, Q_0)|^2,
\end{eqnarray*}where
\begin{equation}
E(a, q, Q_0) = \frac{1}{\phi(q)}\sum_{\substack{\chi \modd{q} \\ 1<Q(\chi)
    \leq Q_0}}\left\{ \sum_n c_1(n) \chi(n)\right\}\left\{\sum_n
c_2(n)\overline{\chi(an)}\right\} 
\end{equation}
and $\E(a, q)$ is as defined in Theorem \ref{thm:2} with
$\gamma = c_1$ and $\delta = c_2$.  We let $B(A) = A+1$ and apply
Theorem \ref{thm:2} to see that 
\begin{equation}
\sum_{q\leq Q} \sumstar_{a\modd{q}} |S(x; a, q) - S(x; q)|^2
\ll \frac{x^2}{\log^A x} \|c_1\tau\|^2\|c_2\tau\|^2 + \sum_{q\leq Q}
\sumstar_{a\modd{q}} |E(a, q, Q_0)|^2.
\end{equation}
It now suffices to bound the last term.  
We apply the Siegel--Walfisz condition \eqref{eqn:SW1} with $Q_0
\le (\log x)^\kappa$, for some constant $\kappa \geq A+1$ to be determined,
and deduce  that
\begin{align}\label{eqn:Cchid}
\sum_{n} c_1(n) \chi(n) \ll x^{1/2} \|c_1 \|(\log x)^{-\kappa}.
\end{align}
The sum involving $c_2$ may be bounded trivially as
\[\sum_n |c_2(n)|\le x^{1/2}\| c_2\|,\]
and since the number
of characters with modulus less than $Q_0$ is at most
$Q_0^2$, we have 
\begin{equation}
E(a, q, Q_0) \ll \frac{Q_0^2 x \|c_1 \| \|c_2\|}{\phi(q)(\log x)^{\kappa}}.
\end{equation}
Thus
\begin{align*}
\sum_{q \le Q}\sumstar_{a\modd{q}} |E(a, q, Q_0)|^2 
&\ll \frac{Q_0^4 x^2\|c_1 \|^2\|c_2\|^2}{(\log x)^{2\kappa}} \sum_{q\le Q}
\frac{1}{\phi(q)}\\
&\ll \frac{Q_0^4 x^2 \|c_1 \|^2\|c_2\|^2}{(\log x)^{2\kappa-1}}.
\end{align*}
We then see that the Corollary
follows upon choosing 
\[2\kappa = 5A+5.\]

\end{document}